\newtheorem{thmintro}{Theorem}
\newtheorem{corintro}[thmintro]{Corollary}
\newtheorem{theorem}{Theorem}[section]
\newtheorem{lemma}[theorem]{Lemma}
\newtheorem{prop}[theorem]{Proposition}
\theoremstyle{definition}
\newtheorem{remark}[theorem]{Remark}
\newtheorem{example}[theorem]{Example}
\newcommand{\NN}{\mathbb{N}}
\newcommand{\RR}{\mathbb{R}}
\newcommand{\AAA}{\mathcal{A}}
\newcommand{\CCC}{\mathcal{C}}
\newcommand{\WW}{\mathcal{W}}
\newcommand{\la}{\langle}
\newcommand{\ra}{\rangle}
\newcommand{\co}{\colon\thinspace}
\DeclareMathOperator{\Ch}{Ch}
\DeclareMathOperator{\Conv}{Conv}
\DeclareMathOperator{\CGR}{CGR}
\DeclareMathOperator{\Geo}{Geo}
\DeclareMathOperator{\proj}{proj}
\DeclareMathOperator{\Res}{Res}
\DeclareMathOperator{\St}{St}
\begin{document}

\renewcommand{\proofname}{{\bf Proof}}

\title{On geodesic ray bundles in buildings}

\author[T.~Marquis]{Timoth\'ee \textsc{Marquis}$^*$}
\address{UCL, 1348 Louvain-la-Neuve, Belgium}
\email{timothee.marquis@uclouvain.be}
\thanks{$^*$F.R.S.-FNRS Postdoctoral Researcher}

\begin{abstract}
Let $X$ be a building, identified with its Davis realisation. In this paper, we provide for each $x\in X$ and each $\eta$ in the visual boundary $\partial X$ of $X$ a description of the geodesic ray bundle $\Geo(x,\eta)$, namely, of the union of all combinatorial geodesic rays (corresponding to infinite minimal galleries in the chamber graph of $X$) starting from $x$ and pointing towards $\eta$. When $X$ is locally finite and hyperbolic, we show that the symmetric difference between $\Geo(x,\eta)$ and $\Geo(y,\eta)$ is always finite, for $x,y\in X$ and $\eta\in\partial X$. This gives a positive answer to a question of Huang, Sabok and Shinko in the setting of buildings. Combining their results with a construction of Bourdon, we obtain examples of hyperbolic groups $G$ with Kazhdan's property (T) such that the $G$-action on its Gromov boundary is hyperfinite.
\end{abstract}

\maketitle

\section{Introduction}

This paper is motivated by a question of Huang, Sabok and Shinko (\cite[Question~1.5]{HSS17}), asking whether in a proper and cocompact hyperbolic space $X$, the symmetric difference between two geodesic ray bundles pointing in the same direction is always finite (see below for precise definitions). 

This is motivated by the study of Borel equivalence relations for the action of a hyperbolic group $G$ on its Gromov boundary: the authors of \cite{HSS17} give a positive answer to the above question when $X$ is a CAT(0) cube complex, and deduce that if $G$ is a hyperbolic cubulated group (namely, if $G$ acts properly and cocompactly on a CAT(0) cube complex), then the $G$-action on its Gromov boundary $\partial G$ is hyperfinite, that is, it induces a hyperfinite equivalence relation (\cite[Corollary~1.2]{HSS17}). 

As it turns out, the answer to \cite[Question~1.5]{HSS17} is ``no'' in full generality: counter-examples are constructed in \cite{Tou17}. The purpose of this paper is to give a positive answer to this question when $X$ is a hyperbolic locally finite building. We underline that the class of groups acting properly and cocompactly on hyperbolic locally finite buildings includes groups with Kazhdan's property (T), and is thus significantly different from the class of cubulated hyperbolic groups considered in \cite{HSS17} (see the fixed point theorem \cite{NR97}). We now give a precise statement of our main result.

By a classical result of M.~Davis (\cite{Davis}), any building $\Delta$ can be realised as a complete CAT(0) metric space $(X,d)$, which can be viewed as a subcomplex of the barycentric subdivision of the standard geometric realisation of $\Delta$. Let $X^{(0)}\subseteq X$ denote the set of barycenters of chambers of $X$, that is, $X^{(0)}$ is the $0$-skeleton of the chamber graph of $\Delta$. The \emph{boundary} $\partial X$ of $X$ is the set of equivalence classes of asymptotic geodesic rays in $X$ (see Section~\ref{section:Preliminaries} for precise definitions). We denote, for each $x\in X^{(0)}$ and $\eta\in\partial X$, by $\Geo(x,\eta)\subseteq X^{(0)}$ the union of all combinatorial geodesic rays $\Gamma=(x_n)_{n\in\NN}\subseteq X^{(0)}$ (i.e. if $x_n$ is the barycenter of the chamber $C_n$, then $(C_n)_{n\in\NN}$ in an infinite minimal gallery in $\Delta$) starting at $x_0=x$ and pointing towards $\eta$, in the sense that $\Gamma$ is contained in a tubular neighbourhood of some geodesic ray towards $\eta$. The sets $\Geo(x,\eta)$ are called \emph{geodesic ray bundles}. 

In this paper, we give a description of geodesic ray bundles in arbitrary buildings (see Section~\ref{subsection:CBOAB} and Proposition~\ref{prop:basic_description_Geo}). When the building $X$ is (Gromov) hyperbolic and locally finite, we deduce from this description the following theorem.

\begin{thmintro}\label{thmintro:main}
Let $X$ be a locally finite hyperbolic building. Let $x,y\in X^{(0)}$ and let $\eta\in\partial X$. Then the symmetric difference of $\Geo(x,\eta)$ and $\Geo(y,\eta)$ is finite.
\end{thmintro}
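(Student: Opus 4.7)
The plan is to combine the description of $\Geo(x,\eta)$ provided by Proposition~\ref{prop:basic_description_Geo} with two standard ingredients — stability of quasi-geodesics in Gromov hyperbolic spaces (Morse lemma) and finiteness of metric balls coming from local finiteness — in order to show that $\Geo(x,\eta)\triangle\Geo(y,\eta)$ is contained in a bounded neighbourhood of $\{x,y\}$ in the chamber graph, and is therefore finite.

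By symmetry it suffices to bound $\Geo(x,\eta)\setminus\Geo(y,\eta)$. Fix $C$ in this set and let $\Gamma_x=(x=C_0,C_1,\ldots,C_k=C,C_{k+1},\ldots)$ be an infinite minimal gallery realising $C\in\Geo(x,\eta)$. By definition $\Gamma_x$ lies in a tubular neighbourhood of a CAT(0) geodesic ray from $x$ to $\eta$, hence is a quasi-geodesic ray with constants that may be taken uniformly thanks to local finiteness of $X$. Gromov hyperbolicity combined with the Morse lemma then produces a constant $D\geq 0$, depending only on $X$ and on $d(x,y)$, such that the tail of $\Gamma_x$ fellow-travels every CAT(0) geodesic ray from $y$ to $\eta$ at Hausdorff distance at most $D$. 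The decisive step is to upgrade this geometric statement to a combinatorial one: for $d(x,C)$ large enough I aim to produce an infinite minimal gallery from $y$ through $C$ pointing to $\eta$, which would contradict $C\notin\Geo(y,\eta)$. The natural candidate is the concatenation of a minimal gallery from $y$ to $C$ with the tail $(C,C_{k+1},C_{k+2},\ldots)$ of $\Gamma_x$.

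The only non-trivial content of this candidate is its minimality, i.e.\ the equality $d(y,C_n)=d(y,C)+d(C,C_n)$ for every $n\geq k$. This is where Proposition~\ref{prop:basic_description_Geo} is brought to bear: it should translate membership in $\Geo(\cdot,\eta)$ into a condition on the walls (or half-apartments) of $X$ separating the basepoint from each $C_n$, and only finitely many walls of $X$ separate $x$ from $y$, so the conditions defining $C\in\Geo(x,\eta)$ and $C\in\Geo(y,\eta)$ can differ only across those walls. The main obstacle I expect lies precisely in this final translation: combining the description of $\Geo$ with hyperbolicity in an essential way, to ensure that the finitely many walls potentially responsible for a discrepancy all cross a bounded region around $\{x,y\}$, and hence cannot obstruct minimality of the concatenation for $C$ far from $\{x,y\}$. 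Once this is in place, local finiteness of $X$ forces the remaining exceptional set of chambers to be finite, completing the proof.
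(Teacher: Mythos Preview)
Your outline correctly identifies the overall shape of the argument --- geometric control from hyperbolicity, followed by a combinatorial upgrade --- and you are honest that the second step is where the difficulty lies. But the step you flag as ``the main obstacle'' is not a detail to be filled in: it is the entire content of the theorem, and the mechanism you sketch for it does not work in a building.

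The specific problem is the sentence ``only finitely many walls of $X$ separate $x$ from $y$''. Walls are objects of an apartment, not of $X$; there is no global wall set in a thick building. Your CGR $\Gamma_x$ lives in some apartment $A$, but $y$ need not lie in $A$, and the minimal gallery you want to prepend from $y$ to $C$ need not lie in any apartment together with the tail of $\Gamma_x$. So the wall-crossing criterion for minimality of the concatenation is not available. Even when one retracts everything to a common apartment, the retraction centered at $x$ need not take a minimal gallery from $y$ to one from $\rho_{A,x}(y)$, so the argument does not reduce to the thin case. Relatedly, Lemma~\ref{lemma:preparation} does give, for each fixed $\Gamma_x$, an index $k$ from which the concatenation is a CGR; but $k$ depends on $\Gamma_x$, and since $\Geo(x,\eta)$ is a union over infinitely many CGRs, you would need a \emph{uniform} $k$, which the Morse lemma alone does not supply.

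The paper closes this gap by a genuinely different route. Hyperbolicity is used not via the Morse lemma and concatenation, but to prove that the set $\CCC_{\eta}$ of limit points of CGRs towards $\eta$ is \emph{finite} (Lemma~\ref{lemma:hyperbolic_condition}). Combined with Proposition~\ref{prop:basic_description_Geo}, this decomposes $\Geo(y,\eta)$ into finitely many combinatorial sectors $Q(y,\xi)$, each contained in an apartment. One then proves the finiteness of the symmetric difference first inside a single apartment (Lemma~\ref{lemma:prop_in_apt}), and bootstraps to the building using the minimality property $Q(z,\xi_{z,\eta})\subseteq Q(x,\xi_{z,\eta})$ of Proposition~\ref{prop:Qxixinv} (Lemmas~\ref{lemma:z1zr} and~\ref{lemma:yxiycase}). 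The finiteness of $\CCC_{\eta}$ is the missing combinatorial ingredient that replaces your intended wall count; without it, or something equivalent, your plan does not go through.
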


As an immediate consequence of Theorem~\ref{thmintro:main} and of \cite[Theorem~1.4]{HSS17}, we deduce the following corollary.

\begin{corintro}\label{corintro:Bourdon}
Let $G$ be a group acting cocompactly on a locally finite hyperbolic building $X$, and assume that $G$ acts freely on the chambers of $X$. Then the natural action of $G$ on its Gromov boundary is hyperfinite.
\end{corintro}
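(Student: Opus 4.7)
The plan is to deduce the corollary directly from Theorem~\ref{thmintro:main} together with \cite[Theorem~1.4]{HSS17}, following the exact template by which \cite[Corollary~1.2]{HSS17} is derived from the CAT(0) cube complex case. The first step is to verify that $G$ acts properly and cocompactly on $X$: cocompactness is assumed, and properness follows because local finiteness of $X$ forces every compact subset to meet only finitely many chambers, so by freeness of the $G$-action on chambers its pointwise stabiliser is finite. In particular, $X$ is a proper CAT(0) hyperbolic geodesic space on which $G$ acts geometrically, so $G$ is word-hyperbolic and there is a canonical $G$-equivariant homeomorphism $\partial G\cong\partial X$.

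The second step is to feed Theorem~\ref{thmintro:main} into \cite[Theorem~1.4]{HSS17}. That result asserts, in a setting that covers proper cocompact actions on locally finite CAT(0) hyperbolic complexes equipped with a $G$-invariant discrete set of base points, that finiteness of the symmetric differences $\Geo(x,\eta)\triangle\Geo(y,\eta)$ implies hyperfiniteness of the induced $G$-action on the Gromov boundary. Since $X^{(0)}$ is a $G$-invariant, locally finite, $G$-equivariantly quasi-isometrically embedded discrete set of base points on which $G$ acts freely (freeness on chambers) and cofinitely (cocompactness on $X$), the hypotheses of \cite[Theorem~1.4]{HSS17} are satisfied with this choice of $X^{(0)}$ and with the combinatorial geodesic rays supplied by minimal galleries. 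Plugging in Theorem~\ref{thmintro:main} in place of the CAT(0) cube complex input delivers exactly the desired hyperfiniteness of the $G$-action on $\partial G\cong\partial X$.

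There is essentially no obstacle here: the statement is designed so as to be an immediate combination of two already-stated results. The only point requiring care is to confirm that the combinatorial data used in \cite[Theorem~1.4]{HSS17} (a discrete $G$-set of base points together with combinatorial geodesic rays pointing at each boundary point) matches the data supplied by the chamber graph and the minimal-gallery notion of combinatorial geodesic ray used in Theorem~\ref{thmintro:main}; this matching is routine from the definitions recalled in the introduction and from local finiteness of $X$.
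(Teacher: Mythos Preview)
Your proposal is correct and follows exactly the paper's approach: the paper simply states that the corollary is ``an immediate consequence of Theorem~\ref{thmintro:main} and of \cite[Theorem~1.4]{HSS17}'', and you have spelled out the routine verifications (properness of the action from local finiteness and freeness on chambers, the identification $\partial G\cong\partial X$, and the matching of the combinatorial data) that make this immediate consequence go through.
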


In \cite[\S 1.5.3]{Bou00} (see also \cite{Sw01}), M.~Bourdon constructs a family of groups $G$ with property (T) acting cocompactly on some hyperbolic building $X$. These groups $G$ are defined as fundamental groups of some \emph{complexes of groups} (a standard reference to this topic is \cite{BHCAT0}), and it follows straightaway from the form of the complexes of groups involved that $G$ acts freely on the set of chambers of $X$ and that $X$ is locally finite. Another example of such a group with an explicit short presentation also recently appeared in \cite{Cap17}.

In particular, Corollary~\ref{corintro:Bourdon} yields examples of hyperbolic groups with property (T) whose boundary action is hyperfinite.

\begin{corintro}\label{corintro:T}
There exist (infinite) hyperbolic groups $G$ with property (T) such that the $G$-action on its Gromov boundary is hyperfinite.
\end{corintro}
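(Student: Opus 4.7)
The proof is essentially a matter of combining Corollary~\ref{corintro:Bourdon} with a known construction producing concrete examples that simultaneously satisfy its geometric hypotheses and enjoy property~(T). The plan is as follows.

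First, I would invoke the construction in \cite[\S 1.5.3]{Bou00} recalled immediately before the corollary: for suitable parameters, it realises a group $G$ as the fundamental group of a complex of groups whose development is a hyperbolic right-angled Fuchsian building $X$. From the shape of the complex of groups one reads off that $X$ is locally finite and that the induced $G$-action on $X$ is cocompact and simply transitive on the set of chambers. In particular the $G$-action on chambers is free, so the hypotheses of Corollary~\ref{corintro:Bourdon} are met. An alternative input is the finitely presented example of \cite{Cap17}.

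Second, I would verify that $G$ is an infinite hyperbolic group with Kazhdan's property~(T). Property~(T) is the main point of Bourdon's construction, established via the spectral criterion applied to the links of vertices in the complex of groups (see also \cite{Sw01}). Hyperbolicity of $G$ as an abstract group follows from the Milnor--Schwarz lemma applied to the proper cocompact isometric $G$-action on the CAT(0) hyperbolic geodesic space $X$; the group is infinite since $X$ is non-compact (any apartment is already a non-compact tiling of the hyperbolic plane). Under the quasi-isometry $G\to X$ provided by Milnor--Schwarz, the Gromov boundary $\partial G$ is $G$-equivariantly homeomorphic to the visual boundary $\partial X$, with the two actions intertwined.

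Third, having checked its hypotheses, I would apply Corollary~\ref{corintro:Bourdon} to conclude that the $G$-action on $\partial X\cong\partial G$ is hyperfinite, which is the desired statement. I do not anticipate any serious mathematical obstacle: all of the substance is already packaged into Theorem~\ref{thmintro:main} (and hence Corollary~\ref{corintro:Bourdon}) on the geometric side, and into \cite{Bou00,Sw01,Cap17} on the algebraic side. The only task, and therefore the ``hardest'' step, is the bookkeeping required to read off freeness on chambers, local finiteness of $X$, cocompactness, and property~(T) from Bourdon's combinatorial data, and to identify $\partial G$ with $\partial X$ equivariantly so that Corollary~\ref{corintro:Bourdon} applies verbatim.
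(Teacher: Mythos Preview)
Your proposal is correct and follows exactly the route the paper takes: the paper does not give a separate proof of this corollary but simply observes (in the paragraph preceding it) that Bourdon's groups from \cite[\S 1.5.3]{Bou00} act cocompactly and freely on chambers of a locally finite hyperbolic building, so Corollary~\ref{corintro:Bourdon} applies. Your added remarks on Milnor--Schwarz and the identification $\partial G\cong\partial X$ are implicit in the statement of Corollary~\ref{corintro:Bourdon} itself (which already speaks of the Gromov boundary of $G$), so they are harmless but not needed; one minor caution is that Bourdon's property~(T) examples are Fuchsian but not necessarily right-angled.
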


Note that any group with property (T) that acts on a CAT(0) cube complex has a global fixed point (\cite{NR97}). In particular, Theorem~\ref{thmintro:main} covers situations that are not covered by \cite{HSS17} (see also the last paragraph in the introduction of \cite{HSS17}).

\smallskip 

\noindent
{\bf Acknowledgement.}
I would like to thank Pierre-Emmanuel Caprace for bringing \cite[Question~1.5]{HSS17} to my attention and for suggesting to explore it in the context of buildings. I would also like to thank the anonymous referee for his/her precious comments.

\section{Preliminaries}\label{section:Preliminaries}

\subsection{CAT(0)-spaces and Gromov hyperbolic spaces}\label{subsection:CAT0}
The standard reference for this paragraph is \cite{BHCAT0}.

Let $(X,d)$ be a complete CAT(0)-space, namely, a complete geodesic metric space in which every triangle $\Delta$ is at least as thin as the corresponding triangle $\Delta'$ in Euclidean space $\mathbb{E}$ with same side lengths, in the sense that any two points $x,y$ of $\Delta$ are at distance at most $d_{\mathbb{E}}(x',y')$ from one another, where $x',y'$ are the points on $\Delta'$ corresponding to $x,y$ respectively, and $d_{\mathbb{E}}$ is the Euclidean distance on $\mathbb{E}$.

Given two points $x,y\in X$, there is a unique geodesic segment from $x$ to $y$, which we denote $[x,y]$. A {\bf geodesic ray} based at $x\in X$ is an isometry $r\co \RR_{\geq 0}\to X$ with $r(0)=x$. Two geodesic rays $r,r'$ are called {\bf asymptotic} if $\sup_{t\in\RR_{\geq 0}}d(r(t),r'(t))<\infty$. Equivalently, identifying $r,r'$ with their image in $X$, they are asymptotic if they are at bounded {\bf Hausdorff distance} from one another, that is, if $r$ (resp. $r'$) is contained in a tubular neighbourhood of $r'$ (resp. $r$). We recall that a {\bf tubular neighbourhood} of a subset $S$ of $X$ is just an $\epsilon$-neighbourhood of $S$ for some $\epsilon>0$. The {\bf boundary} of $X$, denoted $\partial X$, is the set of equivalence classes $[r]$ of geodesic rays $r\subseteq X$, where two geodesic rays are equivalent if they are asymptotic. We then say that the geodesic ray $r$ {\bf points towards} $\eta:=[r]\in\partial X$. For each $x\in X$ and $\eta\in\partial X$, there is a unique geodesic ray starting at $x$ and pointing towards $\eta$, which we denote $[x,\eta)$.

The space $X$ is called {\bf (Gromov) hyperbolic} if there is some $\delta>0$ such that each triangle $\Delta$ in $X$ is $\delta$-slim, in the sense that each side of $\Delta$ is contained in a $\delta$-neighbourhood of the other two ($X$ is then also called {\bf $\delta$-hyperbolic}). Hyperbolic spaces can be thought of as fattened versions of trees, and their behavior is somehow opposite to that of a Euclidean space: if the CAT(0) space $X$ is {\bf proper} (every closed ball of $X$ is compact) and {\bf cocompact} (there is some compact subset $C\subseteq X$ such that $\mathrm{Isom}(X).C=X$), then $X$ is hyperbolic if and only if it does not contain a subspace isometric to the Euclidean plane.

There is a notion of {\bf Gromov boundary} of a hyperbolic space; in the context of CAT(0) spaces, it coincides with the boundary defined above, endowed with the cone topology (see \cite[II.8]{BHCAT0}).

\subsection{Buildings}\label{subsection:Buildings}
The standard reference for this paragraph is \cite{BrownAbr}.

Let $\Delta$ be a building, viewed as a simplicial complex (see \cite[Chapter~4]{BrownAbr}). Let $\Ch(\Delta)$ denote the set of {\bf chambers} (i.e. maximal simplices) of $\Delta$. A {\bf panel} is a codimension $1$ simplex of $\Delta$. Two chambers are {\bf adjacent} if they share a common panel. A {\bf gallery} between two chambers $C,D\in\Ch(\Delta)$ is a sequence $\Gamma=(C_0=C,C_1,\dots,C_k=D)$ of chambers such that $C_{i-1}$ and $C_i$ are distinct and adjacent for each $i=1,\dots,k$. The integer $k$ is called the {\bf length} of $\Gamma$. If $\Gamma$ is a gallery of minimal length between $C$ and $D$, it is called a {\bf minimal gallery} and its length is denoted $d_{\Ch}(C,D)$. The map $d_{\Ch}\co\Ch(\Delta)\times\Ch(\Delta)\to\NN$ is then a metric, called the {\bf chamber distance} on $\Delta$. An infinite sequence $\Gamma=(C_i)_{i\in\NN}$ of chambers is called a {\bf minimal gallery} if $(C_0,\dots,C_n)$ is a minimal gallery for each $n\in\NN$. Any such $\Gamma$ is contained in an apartment $A$ of $\Delta$.

Let $A$ be an apartment of $\Delta$ and let $C,D\in\Ch(A)$ be distinct adjacent chambers in $A$. Then no chamber of $A$ is at equal (chamber) distance from $C$ and $D$; this yields a partition $\Ch(A)=\Phi(C,D)\dot{\cup}\Phi(D,C)$, where $\Phi(C,D)$ is the set of chambers that are closer to $C$ than to $D$. The subcomplexes of $A$ with underlying chamber sets $\Phi(C,D)$ and $\Phi(D,C)$ are called {\bf half-spaces} or {\bf roots}, and their intersection is called the {\bf wall} separating $C$ from $D$. If $m$ is a wall delimiting the half-spaces $\Phi_+,\Phi_-$ of $A$, we say that two subsets $S_+\subseteq\Phi_+$ and $S_-\subseteq\Phi_-$ are {\bf separated} by $m$.

A gallery $\Gamma=(C_0,\dots,C_k)$ (resp. $\Gamma=(C_i)_{i\in\NN}$) contained in an apartment $A$ is said to {\bf cross} a wall $m$ of $A$ if $m$ is the wall separating $C_{i-1}$ from $C_i$ for some $i\in\{1,\dots,k\}$ (resp. $i\in\NN_{>0}$). The gallery $\Gamma\subseteq A$ is then minimal if and only if it crosses each wall of $A$ at most once. Moreover, if $C,D\in\Ch(A)$, then the set of walls crossed by a minimal gallery $\Gamma$ from $C$ to $D$ depends only on $C,D$, i.e. it is independent of the choice of $\Gamma$. 

If $A$ is an apartment of $\Delta$ and $C\in\Ch(A)$, there is a simplicial map $\rho_{A,C}\co \Delta\to A$, called the {\bf retraction onto $A$ centered at $C$}, with the following properties: $\rho_{A,C}$ is the identity on $A$ and its restriction to any apartment $A'$ containing $C$ is an isomorphism, with inverse $\rho_{A',C}|_{A}\co A\to A'$ (in particular, $\rho_{A,C}$ preserves the minimal galleries from $C$). Moreover, $\rho_{A,C}$ does not increase the distance: $d_{\Ch}(\rho_{A,C}(D),\rho_{A,C}(E))\leq d_{\Ch}(D,E)$ for all $D,E\in\Ch(\Delta)$.

The set of all panels of $\Delta$ is denoted $\Res_1(\Delta)$. The {\bf star} of a panel $\sigma\in\Res_1(\Delta)$, denoted $\St(\sigma)$, is the set of chambers containing $\sigma$. For any panel $\sigma\in\Res_1(\Delta)$ and any chamber $C\in\Ch(\Delta)$, there is a unique chamber $C'$ in $\St(\sigma)$ minimising the gallery distance from $C$ to $\St(\sigma)$; it is called the {\bf projection of $C$ on $\sigma$} and is denoted $\proj_{\sigma}(C):=C'$. It has the following {\bf gate property}: $d_{\Ch}(C,D)=d_{\Ch}(C,C')+d_{\Ch}(C',D)$ for all $D\in \St(\sigma)$.

The building $\Delta$ is called {\bf locally finite} if $\St(\sigma)$ is a finite set of chambers for each $\sigma\in\Res_1(\Delta)$.

\subsection{Davis realisation of a building}\label{subsection:Davis_realisation}
The standard reference for this paragraph is \cite[Chapter~12]{BrownAbr} (see also \cite{Davis}).

Let $\Delta$ be a building. Then $\Delta$ admits a CAT(0)-realisation $(X,d)$, called the {\bf Davis realisation} of $\Delta$, which is a complete CAT(0) space. It can be viewed as a subcomplex of the barycentric subdivision of the standard geometric realisation of $\Delta$, and contains the barycenter of each chamber and panel of $\Delta$. In the sequel, we will often identify $\Delta$ with its Davis realisation $X$, and all related notions (apartment, chamber, panel, gallery, wall,\dots) with their realisation in $X$ (viewed as closed subspaces of $X$). 

We set 
$$X^{(0)}:=\{x_C \ | \ C\in\Ch(\Delta)=\Ch(X)\}\subseteq X,$$ where $x_C\in X$ denotes the barycenter of the chamber $C$. If $A$ is an apartment of $X$, we also set $A^{(0)}:=A\cap X^{(0)}$. 

A {\bf combinatorial path} between $x_C,x_D\in X^{(0)}$ is a piecewise geodesic path $\Gamma\subseteq X$ which is the union of the geodesic segments $[x_{C_{i-1}},x_{C_i}]$ ($i=1,\dots,k$) for some gallery $(C=C_0,C_1,\dots,C_{k}=D)$; we then write $\Gamma=(x_{C_i})_{0\leq i\leq k}$. Thus combinatorial paths in $X$ are in bijection with galleries in $\Delta$. A {\bf combinatorial geodesic} is a combinatorial path corresponding to a minimal gallery in $\Delta$. One defines similarly infinite combinatorial paths and {\bf combinatorial geodesic rays} (abbreviated CGR) by replacing galleries in $\Delta$ with infinite galleries. If $\eta\in\partial X$, then a {\bf combinatorial geodesic ray from $x\in X^{(0)}$ to $\eta$} is a combinatorial geodesic ray starting at $x$ and at bounded Hausdorff distance from some (any) geodesic ray pointing towards $\eta$. We denote by $\CGR(x,\eta)$ the set of CGR from $x\in X^{(0)}$ to $\eta\in\partial X$. If $\Gamma_{xy}$ is a combinatorial geodesic from some $x\in X^{(0)}$ to some $y\in X^{(0)}$, and if $\Gamma_{yz}$ is a combinatorial geodesic (resp. ray) from $y$ to some $z\in X^{(0)}$ (resp. $z\in\partial X$), we denote by $\Gamma_{xy}\cdot\Gamma_{yz}$ the combinatorial path obtained as the concatenation of $\Gamma_{xy}$ and $\Gamma_{yz}$.

Each geodesic segment (resp. geodesic ray) of $X$ is contained in some minimal gallery, and hence also in some apartment $A$ of $X$; in particular, $\partial X$ is covered by the boundaries $\partial A$ of all apartments $A$ of $X$. Conversely, the uniqueness of geodesic rays implies that if $x\in A$ and $\eta\in\partial A$ for some apartment $A$ of $X$, then $[x,\eta)\subseteq A$. Of course, any combinatorial geodesic ray is also contained in some apartment of $X$. For every apartment $A$ and $x=x_C\in A^{(0)}$, the retraction $\rho_{A,C}\co\Delta\to A$ induces a retraction $\rho_{A,x}\co X\to A$ with the same properties as the ones described in \S\ref{subsection:Buildings}. Moreover, $d(\rho_{A,x}(y),\rho_{A,x}(z))\leq d(y,z)$ for all $y,z\in X$, with equality if $y$ belongs to the closed chamber $C\subseteq X$.

Let $A$ be an apartment of $X$. Here are a few important properties of walls in $A$, which can be found in \cite{Nos11} (see also \cite{singlepoint}). A wall $m$ of $A$ that intersects a geodesic (resp. geodesic ray) in more than one point entirely contains that geodesic (resp. geodesic ray); in particular, $m$ is convex. The subset $A\setminus m$ of $A$ has two connected components (the open half-spaces corresponding to $m$), and those components are convex. As we saw in \S\ref{subsection:Buildings}, a combinatorial path $\Gamma=(x_{C_i})_{0\leq i\leq k}$ (resp. $\Gamma=(x_{C_i})_{i\in\NN}$) contained in $A$ is a combinatorial geodesic (resp. a CGR) if and only if it crosses each wall of $A$ at most once.

Note that $X$ is a proper CAT(0) space if and only if it is locally finite. The building $X$ is called {\bf hyperbolic} if it is hyperbolic in the sense of \S\ref{subsection:CAT0} when equipped with the CAT(0) metric $d$. Equivalently, $X$ is hyperbolic if and only if $(A,d)$ is hyperbolic for some (resp. for each) apartment $A$ of $X$, as readily follows from the properties of retractions onto apartments.
 Note that Moussong gave a characterisation of the hyperbolicity of $X$ in terms of the type $(W,S)$ of $\Delta$ (see \cite[Theorem~17.1]{Mou88}): $X$ is hyperbolic if and only if $(W_J:=\la J\ra,J)$ is not an affine Coxeter system whenever $|J|\geq 3$, and there is no pair of disjoint subsets $I,J\subseteq S$ such that $W_{I}$ and $W_J$ are infinite and commute. The only fact that we will need about hyperbolic buildings, however, is the following.

\begin{lemma}\label{lemma:hyperbolic_basic_prop}
Assume that the building $X$ is hyperbolic. Then there is a constant $K>0$ such that for any $x\in X^{(0)}$ and $\eta\in\partial X$, any $\Gamma\in\CGR(x,\eta)$ is contained in a $K$-neighbourhood of $[x,\eta)$. 
\end{lemma}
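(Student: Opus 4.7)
The plan is to recognize the combinatorial geodesic ray $\Gamma=(x_{C_i})_{i\in\NN}$ as a uniform quasi-geodesic in $X$ and then invoke the Morse stability lemma in the $\delta$-hyperbolic space $X$. Parametrize $\Gamma$ by arc length as $\gamma\co\RR_{\geq 0}\to X$; this is a continuous piecewise-geodesic path whose pieces $[x_{C_{i-1}},x_{C_i}]$ have length uniformly bounded by some $D>0$ depending only on the type $(W,S)$ of $X$ (e.g.\ by twice the diameter of the Davis chamber, since adjacent chambers share a panel and all chambers are isometric to a fixed compact polytope). By the preliminaries, $\Gamma$ is contained in some apartment $A$ of $X$, and $A$ is a convex, isometrically embedded copy of the Davis realization of the Coxeter complex of type $(W,S)$.

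The key step is to show that $\gamma$ is a $(\lambda,\epsilon)$-quasi-geodesic in $A$ (hence in $X$, by convexity of $A$) with constants depending only on $(W,S)$. Identify $A$ with the Davis realization of $(W,S)$ so that $C_0$ corresponds to the identity of $W$ and adjacency of chambers corresponds to right-multiplication by elements of $S$. The infinite minimal gallery $(C_i)_{i\in\NN}$ then corresponds to an infinite reduced expression $w_0=e,w_1,w_2,\dots$ in $W$ with $\ell_S(w_i)=d_{\Ch}(C_0,C_i)=i$. By the \v{S}varc--Milnor lemma applied to the proper cocompact action of $W$ on its Davis realization, the orbit map $w\mapsto x_{wC_0}$ is a quasi-isometry $(W,d_S)\to (A,d)$ with constants depending only on $(W,S)$; in particular, there exist constants $\lambda\geq 1$ and $\epsilon'\geq 0$ independent of $\Gamma$ such that $d(x_{C_0},x_{C_i})\geq \lambda^{-1}i-\epsilon'$ for every $i$. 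Combined with the uniform upper bound $D$ on the segment lengths, this yields $(\lambda,\epsilon)$-quasi-geodesic estimates for the arc-length parametrization $\gamma$, with $(\lambda,\epsilon)$ depending only on $(W,S)$.

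By definition of $\CGR(x,\eta)$, the rays $\gamma$ and $[x,\eta)$ share both the basepoint $x$ and the endpoint $\eta\in\partial X$. The Morse stability lemma for quasi-geodesic rays in the $\delta$-hyperbolic space $X$ (see e.g.\ \cite[III.H.1.7]{BHCAT0}) then provides a constant $K=K(\delta,\lambda,\epsilon)$ such that $\gamma$ is contained in the $K$-neighbourhood of $[x,\eta)$. Since $(W,S)$ and $\delta$ are fixed, $K$ is uniform in $x,\eta,\Gamma$, proving the lemma. The main obstacle is the uniform quasi-geodesic estimate of the second paragraph; it ultimately reduces to the \v{S}varc--Milnor comparison between the word metric on $W$ and the CAT(0) metric on its Davis realization, which supplies constants independent of the particular apartment or CGR chosen.
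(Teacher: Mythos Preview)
Your proof is correct and follows essentially the same two-step approach as the paper: the paper's argument cites \cite[Proposition~I.7.31]{BHCAT0} (the comparison between combinatorial and metric distances in polyhedral complexes with finitely many shapes, which is precisely the \v{S}varc--Milnor-type estimate you spell out via the $W$-action on an apartment) to conclude that combinatorial geodesics are uniform quasi-geodesics, and then invokes the Morse stability lemma \cite[Theorem~III.1.7]{BHCAT0}. You have simply unpacked the first citation.
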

\begin{proof}
By \cite[Proposition~I.7.31]{BHCAT0}, combinatorial geodesics are quasi-geodesics (for the CAT(0) metric $d$), so that the lemma follows from \cite[Theorem~III.1.7]{BHCAT0}.
\end{proof}

Here is also a basic useful fact about combinatorial geodesic rays.

\begin{lemma}\label{lemma:preparation}
Let $x\in X^{(0)}$, and let $\Gamma=(x_n)_{n\in\NN}$ be a CGR. Then there exists some $k\in\NN$ such that $\Gamma_{xx_k}\cdot (x_n)_{n\geq k}$ is a CGR for any combinatorial geodesic $\Gamma_{xx_k}$ from $x$ to $x_k$.
\end{lemma}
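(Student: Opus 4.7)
The plan is to reduce the lemma to a finiteness statement about walls in the apartment of $\Gamma$, using retractions onto that apartment. First I would identify $x = x_D$ and $x_n = x_{C_n}$ for chambers $D, C_n$ of $X$; the concatenation $\Gamma_{xx_k} \cdot (x_n)_{n \geq k}$ is a CGR iff $f(n) = f(k) + (n-k)$ for all $n \geq k$, where $f(n) := d_{\Ch}(D, C_n)$. This condition depends only on $f$ and not on the choice of $\Gamma_{xx_k}$. Since the triangle inequality yields $f(n+1) \leq f(n) + 1$, it suffices to exhibit some $k$ with $f(n+1) = f(n) + 1$ for all $n \geq k$.

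Next I would fix an apartment $A$ of $X$ containing $\Gamma$; let $\sigma_n$ be the common panel of $C_n$ and $C_{n+1}$, $m_n$ the wall of $A$ containing $\sigma_n$, and $\alpha_n^{-}$ (resp.\ $\alpha_n^{+}$) the root of $A$ bounded by $m_n$ and containing $C_n$ (resp.\ $C_{n+1}$). Minimality of $\Gamma$ ensures that the $m_n$ are pairwise distinct. For each $n$, set $D^{(n)} := \rho_{A, C_n}(D) \in \Ch(A)$. Because the retraction $\rho_{A, C_n}$ is distance-non-increasing in the chamber metric and fixes $C_0 \in A$, we have $d_{\Ch}(D^{(n)}, C_0) \leq f(0)$, so each $D^{(n)}$ lies in the \emph{finite} set $B := \{E \in \Ch(A) : d_{\Ch}(E, C_0) \leq f(0)\}$.

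For each chamber $E \in \Ch(A)$, the walls of $A$ separating $E$ from $C_0$ form a finite set of cardinality $d_{\Ch}(E, C_0)$; since the $m_n$ are pairwise distinct, $\{n : E \in \alpha_n^{+}\}$ is finite. Taking a union over the finite set $B$ produces some $N \in \NN$ with $D^{(n)} \in \alpha_n^{-}$ for every $n \geq N$. For such $n$, the wall $m_n$ separates $C_{n+1} \in \alpha_n^{+}$ from both $D^{(n)}$ and $C_n$ (which both lie in $\alpha_n^{-}$) but does not separate $D^{(n)}$ from $C_n$, whence $d_{\Ch}(D^{(n)}, C_{n+1}) = d_{\Ch}(D^{(n)}, C_n) + 1$ in $A$. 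Since $\rho_{A, C_n}$ restricts to an isomorphism on any apartment containing both $D$ and $C_n$, $d_{\Ch}(D^{(n)}, C_n) = f(n)$; and by distance-non-increasing, $f(n+1) \geq d_{\Ch}(D^{(n)}, C_{n+1}) = f(n) + 1$. Combined with the triangle inequality $f(n+1) \leq f(n) + 1$, this forces $f(n+1) = f(n) + 1$ for every $n \geq N$, and $k := N$ works.

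The main obstacle is that the retracted image $D^{(n)}$ genuinely depends on $n$: different centers of retraction fold $D$ into $A$ in different ways, so one cannot simply find a single apartment containing $D$ together with the whole tail of $\Gamma$ and run a purely thin argument. The crucial observation is that the distance-non-increasing property with respect to the \emph{fixed} basepoint $C_0$ nevertheless traps all the $D^{(n)}$ in a common finite ball of $A$, which makes a single finite-union step sufficient to control them uniformly and conclude.
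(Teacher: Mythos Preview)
Your proof is correct and takes a genuinely different route from the paper's. You work geometrically inside an apartment $A\supseteq\Gamma$: retracting $D$ onto $A$ with varying centers $C_n$ traps all the images $D^{(n)}$ in a fixed finite ball around $C_0$, and then the wall combinatorics of $A$ forces $f(n+1)=f(n)+1$ for all large $n$. The paper, by contrast, uses no apartments, walls, or retractions whatsoever. After reducing inductively to the case where $x$ and $x_0$ are adjacent, it argues by pure distance-counting: if the concatenation fails to be a CGR at $k=0$, and fails again starting from some resulting $m$ (so $f(m)\leq m$), one reaches a $k>m$ with $f(k)\leq k-1$; a further failure at some $\ell>k$ would give $f(\ell)<f(k)+(\ell-k)\leq \ell-1$, contradicting the triangle inequality $f(\ell)\geq d_{\Ch}(x_0,x_\ell)-1=\ell-1$. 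The paper's argument is shorter, entirely metric, and does not need balls in $A$ to be finite (so it does not implicitly use $|S|<\infty$); your approach, while relying on local finiteness of $A$, gives a more transparent geometric explanation of why the obstruction is finite and ties $k$ explicitly to the walls of $A$ near $C_0$.
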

\begin{proof}
Reasoning inductively, we may assume that $x$ and $x_0$ are adjacent. If $(x,x_0)\cdot\Gamma$ is a CGR, the claim is clear with $k=0$. Otherwise, there is some $m\geq 1$ such that the combinatorial path $(x,x_{0},\dots,x_{m})$ is not a combinatorial geodesic, so that $d_{\Ch}(x,x_{m})\leq m$. Let $\Gamma_{xx_{m}}$ be any combinatorial geodesic from $x$ to $x_{m}$. If $\Gamma_{xx_{m}}\cdot (x_{n})_{n\geq m}$ is a CGR, we are done with $k=m$. Otherwise, there is some $k>m$ such that the combinatorial path $\Gamma_{xx_{m}}\cdot (x_{m},\dots,x_{k})$ is not a combinatorial geodesic, so that $d_{\Ch}(x,x_{k})\leq k-1$. Let $\Gamma_{xx_{k}}$ be any combinatorial geodesic from $x$ to $x_{k}$. We claim that $\Gamma_{xx_{k}}\cdot (x_{n})_{n\geq k}$ is a CGR, yielding the lemma. Indeed, otherwise there is some $\ell>k$ such that the combinatorial path $\Gamma_{xx_k}\cdot (x_{n})_{k\leq n\leq \ell}$ is not a combinatorial geodesic, and hence $$\ell-1=d_{\Ch}(x_0,x_{\ell})-1\leq d_{\Ch}(x,x_{\ell})< d_{\Ch}(x,x_{k})+d_{\Ch}(x_{k},x_{\ell})\leq k-1+\ell-k=\ell-1,$$ a contradiction.
\end{proof}

\section{Combinatorial bordification of a building}\label{subsection:CBOAB}
In this section, we recall the notion of combinatorial bordification of a building introduced in \cite{CL11}, and relate it to the notions introduced in Section~\ref{section:Preliminaries}.

Let $\Delta$ be a building, as in \S\ref{subsection:Buildings}. Recall that for each panel $\sigma\in \Res_1(\Delta)$, we have a projection map $\proj_{\sigma}\co\Ch(\Delta)\to \St(\sigma)\subseteq\Ch(\Delta)$ associating to each chamber $C$ the unique chamber of $\St(\sigma)$ closest to $C$. This defines an injective map
$$\pi_{\Ch}\co\Ch(\Delta)\to\prod_{\sigma\in\Res_1(\Delta)}\St(\sigma): C\mapsto \big(\sigma\mapsto \proj_{\sigma}(C)\big).$$
We endow $\prod_{\sigma\in\Res_1(\Delta)}\St(\sigma)$ with the product topology, where each star $\St(\sigma)$ is a discrete set of chambers. The {\bf (minimal) combinatorial bordification} of $\Delta$ is then defined as the closure
$$\CCC_1(\Delta):=\overline{\pi_{\Ch}(\Ch(\Delta))}\subseteq \prod_{\sigma\in\Res_1(\Delta)}\St(\sigma).$$

Since $\pi_{\Ch}$ is injective, we may identify $\Ch(\Delta)$ with a subset of $\CCC_1(\Delta)$, and it thus makes sense to say that a sequence of chambers $(C_n)_{n\in\NN}$ {\bf converges} to some $\xi\co \Res_1(\Delta)\to\Ch(\Delta)$ in $\CCC_1(\Delta)$. If $\Delta$ is reduced to a single apartment $A$, this notion of convergence is transparent: $(C_n)_{n\in\NN}$ converges in $\CCC_1(A)$ if and only if for every wall $m$ of $A$, the sequence $(C_n)_{n\in\NN}$ eventually remains on the same side of $m$. On the other hand, back to a general $\Delta$, one can identify $\CCC_1(A)$ ($A$ an apartment) with the subset of $\CCC_1(\Delta)$ consisting of limits of sequences of chambers in $A$, and in fact (see \cite[Proposition~2.4]{CL11})
$$\CCC_1(\Delta)=\bigcup_{\textrm{$A$ apartment}}\CCC_1(A).$$ 

Let $C\in\Ch(\Delta)$, and let $(C_n)_{n\in\NN}$ be a sequence of chambers converging to some $\xi\in\CCC_1(\Delta)$. We define the {\bf combinatorial sector} based at $C$ and pointing towards $\xi$ as
$$Q(C,\xi):=\bigcup_{k\geq 0}\bigcap_{n\geq k}\Conv(C,C_n),$$
where $\Conv(C,C_n)$ denotes the union of all minimal galleries from $C$ to $C_n$. Then $Q(C,\xi)$ indeed only depends on $C$ and $\xi$ (and not on the choice of sequence $(C_n)_{n\in\NN}$ converging to $\xi$), and is contained in some apartment. Note also that if $C'\in\Ch(\Delta)$ is contained in $Q(C,\xi)$, then
$$Q(C',\xi)\subseteq Q(C,\xi).$$

\begin{example}\label{example:A21}
Let $X$ be a building of type $\widetilde{A}_2$. The apartments of $X$ are then Euclidean planes tesselated by congruent equilateral triangles. If $A$ is an apartment of $X$, its bordification $\CCC_1(A)$ consists of $6$ ``lines of points'' and $6$ ``isolated points'' (see \cite[Example~2.6]{CL11}), which can be seen as follows. Let $x\in A^{(0)}$ and $\eta\in\partial A$.

If the direction $\eta$ is non-singular, in the sense that $[x,\eta)$ is not contained in a tubular neighbourhood of any wall of $A$, then for any $\Gamma=(x_n)_{n\in\NN}\in\CGR(x,\eta)$ contained in $A$, the sequence of (barycenters of) chambers $(x_n)_{n\in\NN}$ converges in $\CCC_1(A)$, to a unique $\xi\in\CCC_1(A)$. The sector $Q(x,\xi)$ in $A$ is shown on Figure~\ref{figure:nonsingular}.

If $\eta$ is singular, that is, if $[x,\eta)$ is contained in a tubular neighbourhood of some wall of $A$, then the set of $\xi\in\CCC_1(A)$ obtained as above as the limit of some $\Gamma\in\CGR(x,\eta)$ are the vertices of some simplicial line ``at infinity'' (see the dashed line on Figure~\ref{figure:singular}), and the combinatorial sectors $Q(x,\xi)$ for $\xi$ on this line are represented on Figure~\ref{figure:singular}. 
\end{example}

\begin{figure}
\centering
  \includegraphics[trim = 20mm 5mm 40mm 0mm, clip, width=8cm]{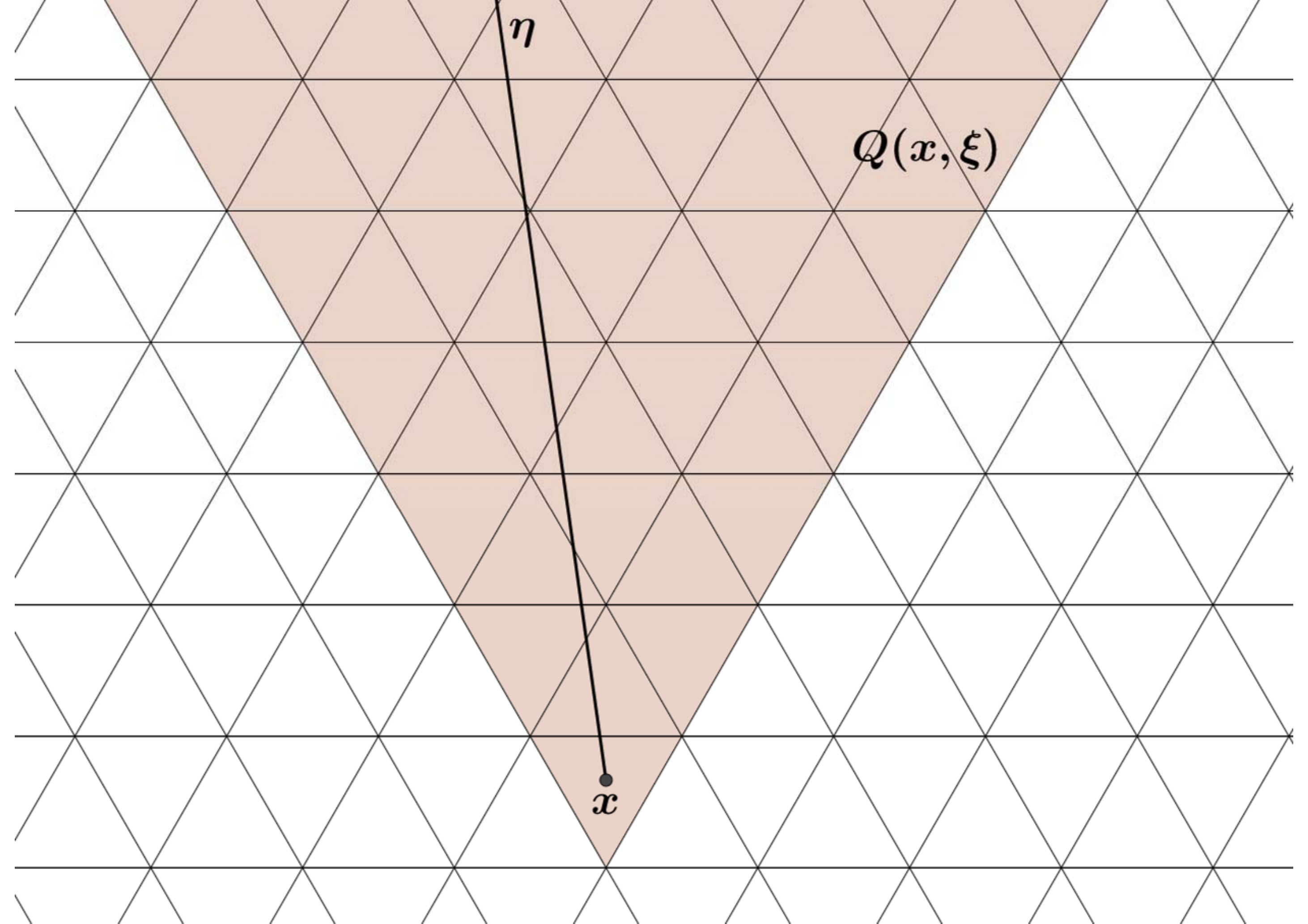}
  \captionof{figure}{Non-singular direction}
  \label{figure:nonsingular}
\end{figure}

To see what combinatorial sectors look like, we relate them to the notions introduced in \S\ref{subsection:Davis_realisation}. As in that paragraph, we identify $\Delta$ with its Davis realisation $X$. To avoid cumbersome notations, we also identify the chambers of $\Delta$ with their barycenters in $X$ (i.e. $\Ch(\Delta)$ with $X^{(0)}$): this thus also identifies the notions of minimal (resp. infinite) gallery and of combinatorial geodesic (resp. ray). For each $\eta\in\partial X$ and each apartment $A$ of $X$, we let $\WW^A_{\eta}$ denote the set of walls $m$ of $A$ containing $\eta$ in their boundary (i.e. $m$ contains a geodesic ray towards $\eta$). We also let $\mathcal{A}_{\eta}$ be the set of apartments of $X$ with $\eta\in\partial A$.

For $\eta\in\partial X$, we next define an equivalence relation $\sim_{\eta}$ on $X^{(0)}$ as follows. For $x,y\in X^{(0)}$ distinct adjacent chambers, we write $x\approx_{\eta}y$ if, for any apartment $A$ containing $x$ and $y$, the wall of $A$ separating $x$ from $y$ does not belong to $\WW_{\eta}^A$. We also write $x\approx_{\eta}x$ for $x\in X^{(0)}$, so that $\approx_{\eta}$ becomes a symmetric and reflexive relation on $X^{(0)}$. We then let $\sim_{\eta}$ be the transitive closure of $\approx_{\eta}$. For any $x\in X^{(0)}$, we now let
$$\Phi_{\eta}(x)\subseteq X$$
be the subcomplex of $X$ obtained as the union of all chambers $y\in X^{(0)}$ with $y\sim_{\eta}x$. Note that 
$$x\sim_{\eta}y\iff y\in\Phi_{\eta}(x)\iff \Phi_{\eta}(y)=\Phi_{\eta}(x)\quad\textrm{for any $y\in X^{(0)}$.}$$

We start by making some useful observations about the relation $\sim_{\eta}$.

\begin{lemma}\label{lemma:basic_approx_eta}
Let $x,y\in X^{(0)}$ and $\eta\in\partial X$, and assume that there exists an apartment of $\AAA_{\eta}$ containing $x,y$. Then $x\approx_{\eta}y$ if and only if there exists an apartment $A\in\AAA_{\eta}$ containing $x,y$ such that the wall of $A$ separating $x$ from $y$ does not belong to $\WW_{\eta}^A$.
\end{lemma}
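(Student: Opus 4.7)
The forward direction is immediate: if $x \approx_\eta y$, then by definition the wall of \emph{every} apartment containing $x$ and $y$ that separates them fails to lie in $\WW_\eta^{\cdot}$, so in particular this holds for the apartment in $\AAA_\eta$ whose existence is granted by the hypothesis.

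For the converse, I would fix such an $A \in \AAA_\eta$ with separating wall $m \notin \WW_\eta^A$, let $A'$ be an arbitrary apartment containing $x$ and $y$, and denote by $m'$ the wall of $A'$ separating $x$ from $y$; the goal is to prove $m' \notin \WW_\eta^{A'}$. If $A' \notin \AAA_\eta$ the conclusion is automatic, since any wall $n \subseteq A'$ satisfies $\partial n \subseteq \partial A'$, so $\eta \in \partial n$ would force $\eta \in \partial A'$, a contradiction. Thus I may assume $A' \in \AAA_\eta$.

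In this remaining case the plan is to invoke the standard building axiom providing an isomorphism $\phi\co A \to A'$ that fixes $A \cap A'$ pointwise; being an isometry of Davis realisations fixing the chambers $x$ and $y$, it sends $m$ to $m'$. The key additional ingredient is that by uniqueness of geodesic rays in the CAT(0) space $X$, the ray $[x,\eta)$ lies in both $A$ and $A'$, hence in $A \cap A'$, and is therefore fixed pointwise by $\phi$; consequently the induced boundary homeomorphism $\partial\phi\co\partial A \to \partial A'$ sends $\eta \in \partial A$ to $\eta \in \partial A'$ when both are viewed as elements of $\partial X$.

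From here the argument is formal: $m \in \WW_\eta^A$ if and only if $m$ contains some geodesic ray $[z,\eta)$, if and only if $m' = \phi(m)$ contains $[\phi(z),\eta)$, if and only if $m' \in \WW_\eta^{A'}$. By contrapositive, $m \notin \WW_\eta^A$ yields $m' \notin \WW_\eta^{A'}$, completing the proof. The only nontrivial step is the transfer of the direction $\eta$ across the apartment isomorphism, which the observation that $[x,\eta) \subseteq A \cap A'$ disposes of cleanly.
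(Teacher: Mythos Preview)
Your proof is correct and follows essentially the same approach as the paper's: both use the building axiom's apartment isomorphism $\phi$ fixing $A\cap A'$ to transfer the wall and the direction $\eta$ between $A$ and $A'$. The differences are cosmetic --- the paper anchors the argument at the barycenter $z$ of the common panel (so that $[z,\eta)\subseteq m'\cap A\cap A'$ directly, giving $[z,\eta)=\phi([z,\eta))\subseteq\phi(m')=m$), whereas you anchor at $x$ to show $\partial\phi(\eta)=\eta$ and then transfer an arbitrary ray; you also make the trivial case $A'\notin\AAA_\eta$ explicit, which the paper leaves implicit.
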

\begin{proof}
The implication $\Rightarrow$ is clear. Conversely, let $A\in\AAA_{\eta}$ be an apartment containing $x,y$ and such that the wall $m$ of $A$ separating $x$ from $y$ does not belong to $\WW_{\eta}^A$, and assume for a contradiction that there is an apartment $A'\in\AAA_{\eta}$ containing $x,y$ such that the wall $m'$ of $A'$ separating $x$ from $y$ belongs to $\WW_{\eta}^{A'}$. Let $z\in A\cap A'$ be the barycenter of the common panel of $x$ and $y$. Then $[z,\eta)\subseteq A\cap A'$. By definition of buildings, there is a simplicial isomorphism $\phi\co A'\to A$ fixing $A\cap A'$ pointwise. Since $[z,\eta)\subseteq m'$ by assumption and $\phi(m')=m$, we deduce that $[z,\eta)=\phi([z,\eta))\subseteq m$. Hence $m\in\WW^{A}_{\eta}$, a contradiction.
\end{proof}

The next lemma introduces some important terminology and notations. For $x\in X^{(0)}$ and $\eta\in\partial X$, we call a CGR $\Gamma\in\CGR(x,\eta)$ {\bf straight} if the infinite gallery corresponding to $\Gamma$ contains the geodesic ray $[x,\eta)$.

\begin{lemma}\label{lemma:correction}
Let $x\in X^{(0)}$ and $\eta\in\partial X$. Then the following assertions hold:
\begin{enumerate}
\item
Let $\Gamma=(x_n)_{n\in\NN}\in\CGR(x,\eta)$. Then the sequence of chambers $(x_n)_{n\in\NN}$ converges in $\CCC_1(X)$. We denote its limit by $\xi_{\Gamma}\in\CCC_1(X)\setminus X^{(0)}$ and we say that $\Gamma$ {\bf converges} to $\xi_{\Gamma}$.
\item
Let $y\in X^{(0)}$, and let $\Gamma_x\in\CGR(x,\eta)$ and $\Gamma_y\in\CGR(y,\eta)$ be contained in some apartment $A$. Then $\xi_{\Gamma_x}=\xi_{\Gamma_y}$ if and only if $\Gamma_x$ and $\Gamma_y$ eventually lie on the same side of any given wall $m\in\WW_ {\eta}^{A}$.
\item
If $\Gamma\in\CGR(x,\eta)$ is straight, it is contained in $\Phi_{\eta}(x)$ and in every apartment $A\in \mathcal{A}_{\eta}$ with $x\in A^{(0)}$. Moreover, $\xi_{x,\eta}:=\xi_{\Gamma}\in\CCC_1(X)\setminus X^{(0)}$ is independent of the choice of a straight $\Gamma\in\CGR(x,\eta)$.
\end{enumerate}
\end{lemma}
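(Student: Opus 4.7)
The proof splits along the three parts. For (1), I will use that every CGR $\Gamma$ lies in some apartment $A$, and that minimality of the corresponding gallery forces $\Gamma$ to cross each wall of $A$ at most once. This immediately yields convergence of $(x_n)$ in $\CCC_1(A)\subseteq\CCC_1(X)$, via the description of $\CCC_1(A)$ recalled above: each wall side stabilises. To rule out $\xi_\Gamma=\pi_{\Ch}(C)$ for a chamber $C$, I will exploit that $d_{\Ch}(x_0,x_n)=n\to\infty$, hence $x_n\ne C$ for large $n$; the first panel crossed by any minimal gallery from $C$ to $x_n$ is then a panel $\sigma$ of $C$ with $\proj_\sigma(x_n)\ne C$, and a pigeonhole over the finitely many panels of $C$ produces infinitely many $n$ sharing such a $\sigma$, contradicting convergence to $\pi_{\Ch}(C)$.

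For (2), I will use that convergence in $\CCC_1(A)$ is determined by eventual sides of walls, so $\xi_{\Gamma_x}=\xi_{\Gamma_y}$ if and only if $\Gamma_x$ and $\Gamma_y$ eventually lie on the same side of every wall of $A$. The ``only if'' direction of the lemma is then immediate by restriction to $\WW_\eta^A$. For the ``if'' direction, I argue that agreement on sides of walls in $\WW_\eta^A$ forces agreement on sides of all walls of $A$ automatically: for $m\notin\WW_\eta^A$ one has $\eta\notin\partial m$, and the standard CAT(0) fact that the distance from a ray towards $\eta$ to a closed convex subspace whose boundary misses $\eta$ is an unbounded convex function gives $d([x,\eta)(t),m)\to\infty$. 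Combined with the bounded Hausdorff distance of each CGR from its defining ray and the asymptoticity of $[x,\eta)$ and $[y,\eta)$, the chambers of $\Gamma_x$ and $\Gamma_y$ eventually lie on the common side of $m$ containing these two rays.

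For (3), let $\Gamma=(x_n)$ be straight. For (a), the wall $m$ of any apartment $A$ separating consecutive chambers $C_n,C_{n+1}$ is crossed transversally by $[x,\eta)$ at the panel $C_n\cap C_{n+1}$; were $\eta\in\partial m$, convexity of $m$ together with uniqueness of geodesic rays would force the tail of $[x,\eta)$ past this crossing point to remain in $m$, contradicting that it enters the interior of $C_{n+1}$. Hence $m\notin\WW_\eta^A$, so $C_n\approx_\eta C_{n+1}$ and $\Gamma\subseteq\Phi_\eta(x)$. For (b), given any $A\in\AAA_\eta$ with $x\in A^{(0)}$, uniqueness of geodesic rays gives $[x,\eta)\subseteq A$, and I proceed by induction on $n$ using the retraction $\rho:=\rho_{A,C_0}$: it fixes $A$ pointwise (hence $[x,\eta)$) and preserves minimal galleries starting at $C_0$. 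Assuming $C_0,\dots,C_n\subseteq A$, the panel $\sigma_{n+1}$ lies in $A$, so $\rho(C_{n+1})$ is the chamber of $A$ adjacent to $C_n$ across $\sigma_{n+1}$ distinct from $C_n$, and straightness pins this down to be $C_{n+1}$ itself since $[x,\eta)$ singles it out in $A$. For (c), two straight CGRs $\Gamma,\Gamma'$ from $x$ lie in a common $A\in\AAA_\eta$ by (b); neither crosses any $m\in\WW_\eta^A$, because such a crossing would force the tail of $[x,\eta)$ past the crossing point to remain in $m$ by $\eta\in\partial m$ and convexity, yet straightness continues the ray through further chambers whose interiors lie outside $m$, a contradiction. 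Both therefore stay on the side of $m$ containing $x$ for each $m\in\WW_\eta^A$, and (2) yields $\xi_\Gamma=\xi_{\Gamma'}$.

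The subtlest step, I expect, is the retraction induction in (3)(b): one must rule out that $\rho$ collapses $C_{n+1}$ to the sibling chamber of $A$ across $\sigma_{n+1}$, which ultimately relies on extracting from straightness that $[x,\eta)$ locates $C_{n+1}$ uniquely within $A$, rather than merely touching its boundary panel.
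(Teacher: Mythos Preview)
Your treatment of (1) and (2) is correct and follows the paper's approach; you even supply the verification that $\xi_\Gamma\notin X^{(0)}$, which the paper leaves implicit.

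The gap is in (3). Both your arguments for (a) and (b) rest on the claim that $[x,\eta)$ enters the \emph{interior} of each $C_{n+1}$ after crossing the panel $\sigma_{n+1}$. This can fail. In an apartment of type $\widetilde A_2$, a geodesic ray from a chamber barycenter through a vertex $v$ passes directly from one triangle into the opposite one; any straight CGR containing this ray must include two intermediate chambers around $v$, and the ray meets their closures only at the single point $v$. So the assertion that ``$[x,\eta)$ singles out $C_{n+1}$ in $A$, rather than merely touching its boundary panel'' is not generally valid, and your retraction induction in (b) cannot be closed this way.

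The paper sidesteps this entirely. For the containment $\Gamma\subseteq A$ (any $A\in\AAA_\eta$ with $x\in A^{(0)}$): since $[x,\eta)\subseteq A$ and $x$ is a chamber barycenter, the ray lies in no wall of $A$; by the wall--geodesic dichotomy it meets each wall of $A$ in at most one point, hence meets interiors of \emph{infinitely many} chambers of $A$. Each such chamber must coincide with some $C_k$ (distinct chambers have disjoint interiors), so infinitely many $C_k$ lie in $A$, and combinatorial convexity of apartments then forces all of $\Gamma$ into $A$. No retraction, no per-chamber tracking of the ray. For $\Gamma\subseteq\Phi_\eta(x)$, one observes that $[x,\eta)$ meets no wall $m\in\WW_\eta^A$ (else $[z,\eta)\subseteq m$ at the meeting point $z$, and the dichotomy would force $[x,\eta)\subseteq m$), hence $\Gamma$ crosses no wall of $\WW_\eta^A$; Lemma~\ref{lemma:basic_approx_eta} then gives $C_n\approx_\eta C_{n+1}$ from this single apartment, without checking all apartments through each adjacent pair as your (a) attempts.
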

\begin{proof}
(1) Since the CGR $\Gamma$ is contained in some apartment $A$, this readily follows from the above description of convergence in $\CCC_1(A)$. 

(2) We have to show that if $\Gamma_x$ and $\Gamma_y$ eventually lie on different sides of a wall $m$ of $A$, then $m\in\WW_{\eta}^A$. But if $\Gamma'_x\subseteq\Gamma_x$ and $\Gamma'_y\subseteq\Gamma_y$ are CGRs separated by $m$, then $[x,\eta)$ (which is contained in a tubular neighbourhood of $\Gamma'_x$ and $\Gamma'_y$) must be contained in a tubular neighbourhood of $m$, as claimed.

(3) Let $\Gamma\in\CGR(x,\eta)$ be straight, and let $A\in \mathcal{A}_{\eta}$ with $x\in A^{(0)}$. Thus $A$ also contains $[x,\eta)$. Since $[x,\eta)$ is not contained in any wall of $A$, we deduce that $A$ must contain infinitely many chambers of $\Gamma$, and hence also $\Gamma$ by convexity. Moreover, since $[x,\eta)$ does not intersect any wall in $\WW^A_{\eta}$, the CGR $\Gamma$ does not cross any wall in $\WW^A_{\eta}$, and hence $\Gamma\subseteq\Phi_{\eta}(x)$ by Lemma~\ref{lemma:basic_approx_eta}. Finally, if $\Gamma'\in\CGR(x,\eta)$ is straight, then both $\Gamma$ and $\Gamma'$ are contained in a common apartment $A\in \mathcal{A}_{\eta}$ by the above discussion, and hence $\xi_{\Gamma}=\xi_{\Gamma'}$ by (2).
\end{proof}

We next give an alternative description of the sets $\Phi_{\eta}(x)$.
\begin{prop}\label{prop:alt_desc_Phi_eta}
Let $x\in X^{(0)}$ and $\eta\in\partial X$. Then $$\Phi_{\eta}(x)\cap X^{(0)}=\{y\in X^{(0)} \ | \ \xi_{x,\eta}=\xi_{y,\eta}\}.$$
\end{prop}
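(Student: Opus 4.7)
The plan is to prove both inclusions by reducing each to an application of Lemma~\ref{lemma:correction}(2) inside a suitable apartment of $\AAA_\eta$.

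For the inclusion $\Phi_\eta(x)\cap X^{(0)}\subseteq\{y:\xi_{x,\eta}=\xi_{y,\eta}\}$, I would first use transitivity of $\sim_\eta$ and of equality to reduce to the base case $y\approx_\eta x$ with $y\neq x$. Lemma~\ref{lemma:basic_approx_eta} then provides an apartment $A\in\AAA_\eta$ containing both $x$ and $y$ such that the wall $m$ of $A$ separating $x$ from $y$ does not belong to $\WW^A_\eta$. Fix straight rays $\Gamma_x\in\CGR(x,\eta)$ and $\Gamma_y\in\CGR(y,\eta)$: by Lemma~\ref{lemma:correction}(3) both lie in $A$ and cross no wall of $\WW^A_\eta$, so $\Gamma_x$ stays on the $x$-side and $\Gamma_y$ on the $y$-side of every $m'\in\WW^A_\eta$. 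Since $m$ is the only wall of $A$ separating $x$ from $y$ and $m\notin\WW^A_\eta$, the chambers $x,y$ lie on the same side of any such $m'$, and Lemma~\ref{lemma:correction}(2) then yields $\xi_{x,\eta}=\xi_{\Gamma_x}=\xi_{\Gamma_y}=\xi_{y,\eta}$.

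For the reverse inclusion, I would fix a straight $\Gamma_x=(x_n)_{n\in\NN}\in\CGR(x,\eta)$ and apply Lemma~\ref{lemma:preparation} with base point $y$ to obtain $k\in\NN$ such that $\Gamma_y^{*}:=\Gamma_{yx_k}\cdot (x_n)_{n\geq k}\in\CGR(y,\eta)$ (it is automatically in $\CGR(y,\eta)$ as it is at bounded Hausdorff distance from $[x,\eta)$, hence from the asymptotic ray $[y,\eta)$). Because $\Gamma_y^{*}$ shares its tail with $\Gamma_x$, Lemma~\ref{lemma:correction}(1) gives $\xi_{\Gamma_y^{*}}=\xi_{\Gamma_x}=\xi_{x,\eta}=\xi_{y,\eta}$. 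Let $A'$ be an apartment containing $\Gamma_y^{*}$; then $A'\in\AAA_\eta$ and $y\in A'$. Pick any straight $\Gamma'\in\CGR(y,\eta)$: by Lemma~\ref{lemma:correction}(3) it lies in $A'$, satisfies $\xi_{\Gamma'}=\xi_{y,\eta}=\xi_{\Gamma_y^{*}}$, and crosses no wall of $\WW^{A'}_\eta$. Lemma~\ref{lemma:correction}(2) then forces $\Gamma_y^{*}$ to lie eventually on the $y$-side of every $m\in\WW^{A'}_\eta$; since $\Gamma_y^{*}$ is a minimal gallery it crosses $m$ at most once, and so it never crosses $m$ at all. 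Consequently every chamber of $\Gamma_y^{*}$ lies in $\Phi_\eta(y)$, and in particular $x_k\in\Phi_\eta(y)$; combined with $x_k\in\Gamma_x\subseteq\Phi_\eta(x)$ (Lemma~\ref{lemma:correction}(3)), this yields $\Phi_\eta(x)=\Phi_\eta(y)$, hence $y\in\Phi_\eta(x)$.

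The main obstacle is the first inclusion, where one must justify that the relation $y\approx_\eta x$ genuinely supplies an apartment in $\AAA_\eta$ containing both $x$ and $y$ to feed into Lemma~\ref{lemma:basic_approx_eta}. If such a common apartment is not immediately visible, I would pass through the retraction $\rho_{A,x}\co X\to A$ onto an apartment $A\in\AAA_\eta$ containing $x$, using that the wall of $A$ separating $x$ from $\rho_{A,x}(y)$ inherits the property of not lying in $\WW^A_\eta$ from any apartment containing $x$ and $y$, and then treating $\rho_{A,x}(y)$ as a substitute for $y$ in the argument above.
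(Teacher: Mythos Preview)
Your reverse inclusion is correct and essentially identical to the paper's argument.

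The forward inclusion, however, has a genuine gap that your fallback does not close. The hypothesis of Lemma~\ref{lemma:basic_approx_eta}---the existence of an apartment in $\AAA_\eta$ containing both $x$ and $y$---can genuinely fail. Let $\sigma$ be the common panel of $x,y$ with barycenter $z$; if the ray $[z,\eta)$ immediately enters the interior of a chamber $C\in\St(\sigma)$ with $C\notin\{x,y\}$ (possible in any thick building), then every apartment $A\in\AAA_\eta$ containing $\sigma$ must contain $C$, so its two chambers on $\sigma$ are $C$ and one other---never both $x$ and $y$. In this situation $x\approx_\eta y$ holds vacuously (every apartment containing $x,y$ has empty $\WW^{\bullet}_\eta$), so the case cannot be dismissed.

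Your fallback then breaks in two places. First, the claimed ``inheritance'' of $m\notin\WW^A_\eta$ via $\rho_{A,x}$ is not justified: the apartments $B$ containing $x,y$ satisfy $\eta\notin\partial B$, hence $\WW^B_\eta=\emptyset$, so the relation $x\approx_\eta y$ carries no information to transport through the retraction. (The conclusion is in fact true here, but for the geometric reason that $[z,\eta)$ leaves the wall through $\sigma$, which you do not invoke.) Second, even granting the claim, you only obtain $\xi_{x,\eta}=\xi_{C,\eta}$ for $C=\rho_{A,x}(y)$; passing from this to $\xi_{y,\eta}=\xi_{x,\eta}$ still requires a further argument linking $y$ to $C$, which you do not supply.

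The paper avoids the missing common apartment altogether. It uses Lemma~\ref{lemma:preparation} to splice the straight ray $\Gamma_x=(x_n)_n$ onto a geodesic from $y$, obtaining $\Gamma_{yx_k\eta}\in\CGR(y,\eta)$, and works in an apartment $A\in\AAA_\eta$ containing this ray (hence $y$ and $x_k$, but not necessarily $x$). The crux is to show that $\Gamma_{yx_k}$ crosses no wall of $\WW^A_\eta$: assuming some $m\in\WW^A_\eta$ separates $y$ from $x_k$, one retracts $m$ onto an apartment $A'\supseteq\Gamma_x$ via $\rho_{A',x_k}$ to get a wall $m'\in\WW^{A'}_\eta$ separating $x$ from $\rho_{A',x_k}(y)$, and then applies the half-apartment trick \cite[Exercise~5.83(a)]{BrownAbr} to build an apartment $A''$ containing $x$, $y$, and the wall $m'\in\WW^{A''}_\eta$---contradicting $x\approx_\eta y$. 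This construction of $A''$ is exactly where the hypothesis $x\approx_\eta y$ does real work, and it is the piece your proposal is missing.
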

\begin{proof}
Let $y\in X^{(0)}$. We have to show that $x\sim_{\eta}y$ if and only if $\xi_{x,\eta}=\xi_{y,\eta}$. Assume first that $x\sim_{\eta}y$. Reasoning inductively on the length of a gallery $(x=x_0,x_1,\dots,x_n=y)$ from $x$ to $y$ such that $x_{i-1}\approx_{\eta}x_i$ for all $i\in\{1,\dots,n\}$, there is no loss of generality in assuming that $x\approx_{\eta}y$. Let $\Gamma_x=(x_n)_{n\in\NN}\in\CGR(x,\eta)$ be straight. By Lemma~\ref{lemma:preparation}, there is some $k\in\NN$ and some combinatorial geodesic $\Gamma_{yx_k}$ from $y$ to $x_k$ such that $\Gamma_{yx_k\eta}:=\Gamma_{yx_k}\cdot (x_n)_{n\geq k}\in\CGR(y,\eta)$. Let $A\in\AAA_{\eta}$ be an apartment containing $\Gamma_{yx_k\eta}$. Let $\Gamma_y\in\CGR(y,\eta)$ be straight, so that $\Gamma_y\subseteq A$ by Lemma~\ref{lemma:correction}(3).
We claim that $\Gamma_{yx_k}$ does not cross any wall in $\WW^A_{\eta}$: this will imply that $\Gamma_{yx_k\eta}$ and $\Gamma_y$ do not cross any wall in $\WW^A_{\eta}$, and hence that $$\xi_{y,\eta}=\xi_{\Gamma_y}=\xi_{\Gamma_{yx_k\eta}}=\xi_{\Gamma_x}=\xi_{x,\eta}$$
by Lemma~\ref{lemma:correction}(2), as desired. Indeed, if $m\in\WW^A_{\eta}$ separates $y$ from $x_k$ and if $A'\in \AAA_{\eta}$ is an apartment containing $\Gamma_x$, then the wall $m':=\rho_{A',x_k}(m)$ of $A'$ belongs to $\WW^{A'}_{\eta}$ (because $\rho_{A',x_k}$ does not increase the distance, and hence $[x_k,\eta)=\rho_{A',x_k}([x_k,\eta))\subseteq A\cap A'$ is contained in a tubular neighbourhood of both $m$ and $m'$) and separates $x_k$ from $y':=\rho_{A',x_k}(y)$. But since $m'$ cannot be crossed by $(x_n)_{0\leq n\leq k}$, it must separate the adjacent chambers $x=\rho_{A',x_k}(x)$ and $y'$. Now, if $\alpha$ is the half-apartment of $A'$ containing $x$ and delimited by $m'$, we find by \cite[Exercise~5.83(a)]{BrownAbr} an apartment $A''$ containing $\alpha$ and $y$. Then $m'\in\WW_{\eta}^{A''}$ separates $x$ from $y$ in $A''$, contradicting our hypothesis $x\approx_{\eta}y$.

Conversely, assume that $\xi_{x,\eta}=\xi_{y,\eta}$, and let us show that $x\sim_{\eta}y$. Let $\Gamma_x=(x_n)_{n\in\NN}\in\CGR(x,\eta)$ and $\Gamma_y\in\CGR(y,\eta)$ be straight. By Lemma~\ref{lemma:preparation}, there is some $k\in\NN$ and some combinatorial geodesic $\Gamma_{yx_k}$ from $y$ to $x_k$ such that $\Gamma_{yx_k\eta}:=\Gamma_{yx_k}\cdot (x_n)_{n\geq k}\in\CGR(y,\eta)$. Let $A\in\AAA_{\eta}$ be an apartment containing $\Gamma_{yx_k\eta}$. Then $\Gamma_y\subseteq A$ by Lemma~\ref{lemma:correction}(3). Note that the walls of $A$ separating $x_k$ from $y$ do not belong to $\WW_{\eta}^A$, for otherwise the CGRs $(x_n)_{n\geq k}$ and $\Gamma_y$ (which do not cross any wall in $\WW_{\eta}^A$) would be separated by some wall of $\WW_{\eta}^A$, contradicting our assumption that $\xi_{\Gamma_x}=\xi_{x,\eta}=\xi_{y,\eta}=\xi_{\Gamma_y}$. Hence $x\sim_{\eta}x_k\sim_{\eta}y$ by Lemma~\ref{lemma:basic_approx_eta}, yielding the claim.
\end{proof}

We conclude our round of observations about the relation $\sim_{\eta}$ with the following consequences of Proposition~\ref{prop:alt_desc_Phi_eta}.

\begin{lemma}\label{lemma:correctionbis}
Let $x\in X^{(0)}$ and $\eta\in\partial X$. Then the following assertions hold:
\begin{enumerate}
\item
Let $A\in\AAA_{\eta}$ be an apartment containing $x$ and let $y\in A^{(0)}$. Then $x\sim_{\eta}y$ if and only if the walls of $A$ separating $x$ from $y$ do not belong to $\WW_{\eta}^A$.
\item
Let $\Gamma\in\CGR(x,\eta)$ and let $A$ be an apartment containing $\Gamma$. Then $\Gamma$ converges to $\xi_{x,\eta}$ $\iff$ the walls of $A$ crossed by $\Gamma$ do not belong to $\WW^A_{\eta}$ $\iff$ $\Gamma\subseteq\Phi_{\eta}(x)$.
\end{enumerate}
\end{lemma}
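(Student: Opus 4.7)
The plan is to prove (1) first, and then reduce (2) to (1) together with Lemma~\ref{lemma:correction}. For the ``if'' direction of (1), I would take any minimal gallery $(x=x_0,\ldots,x_n=y)$ in $A$ and note that each consecutive pair $x_{i-1},x_i$ lies in the apartment $A\in\AAA_\eta$ and is separated there by a wall not in $\WW^A_\eta$; so Lemma~\ref{lemma:basic_approx_eta} yields $x_{i-1}\approx_\eta x_i$, and chaining gives $x\sim_\eta y$. For the converse, I would translate $x\sim_\eta y$ into $\xi_{x,\eta}=\xi_{y,\eta}$ via Proposition~\ref{prop:alt_desc_Phi_eta}, pick straight CGRs $\Gamma_x\in\CGR(x,\eta)$ and $\Gamma_y\in\CGR(y,\eta)$, and use Lemma~\ref{lemma:correction}(3) to place both in $A$ and prevent either from crossing a wall of $\WW^A_\eta$. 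A wall $m\in\WW^A_\eta$ separating $x$ from $y$ would then trap $\Gamma_x$ and $\Gamma_y$ in opposite closed half-apartments forever, contradicting $\xi_{\Gamma_x}=\xi_{\Gamma_y}$ via Lemma~\ref{lemma:correction}(2).

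For part (2), I would first observe that $A\in\AAA_\eta$: since $\Gamma\subseteq A$ is a quasi-geodesic ray in the CAT(0) subspace $A$ whose limit in $\partial X$ is $\eta$, its limit in $\partial A$ must also be $\eta$. For the first equivalence, I would compare $\Gamma$ with a straight CGR $\Gamma_x\in\CGR(x,\eta)$, which by Lemma~\ref{lemma:correction}(3) also lies in $A$ and never crosses a wall of $\WW^A_\eta$, hence stays forever on the $x$-side of each such wall. By Lemma~\ref{lemma:correction}(2), $\Gamma$ converges to $\xi_{x,\eta}=\xi_{\Gamma_x}$ iff $\Gamma$ eventually lies on the $x$-side of every $m\in\WW^A_\eta$; since $\Gamma$ starts at $x$ and, being a CGR, crosses each wall of $A$ at most once, this is equivalent to $\Gamma$ never crossing any such $m$.

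For the second equivalence of (2), the plan is to apply (1) chamber by chamber along $\Gamma=(x_n)_{n\in\NN}$: the walls of $A$ crossed by $\Gamma$ are exactly the walls of $A$ separating $x=x_0$ from some $x_n$, and by (1) these all lie outside $\WW^A_\eta$ iff $x\sim_\eta x_n$ for every $n$, i.e.\ iff $\Gamma\subseteq\Phi_\eta(x)$. The main (minor) obstacle is the $\Rightarrow$ direction of (1): the transitive-closure definition of $\sim_\eta$ only provides $\approx_\eta$-chains whose intermediate chambers may wander outside $A$, so one cannot unfold $x\sim_\eta y$ directly inside $A$; going through Proposition~\ref{prop:alt_desc_Phi_eta} and the canonical straight CGRs sidesteps this issue cleanly.
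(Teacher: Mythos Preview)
Your proposal is correct and follows essentially the same approach as the paper. Your proof of (1) is virtually identical to the paper's (Lemma~\ref{lemma:basic_approx_eta} for $\Leftarrow$, Proposition~\ref{prop:alt_desc_Phi_eta} plus straight CGRs and Lemma~\ref{lemma:correction}(2),(3) for $\Rightarrow$); for (2) the paper simply writes ``This readily follows from (1)'', and your argument is a correct unpacking of that, making explicit the appeal to Lemma~\ref{lemma:correction}(2),(3) for the first equivalence and to (1) chamber-by-chamber for the second.
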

\begin{proof}
(1) The implication $\Leftarrow$ follows from Lemma~\ref{lemma:basic_approx_eta}. Conversely, assume that $x\sim_{\eta}y$. Then $\xi_{x,\eta}=\xi_{y,\eta}$ by Proposition~\ref{prop:alt_desc_Phi_eta}, that is, $\xi_{\Gamma_x}=\xi_{\Gamma_y}$ for some straight $\Gamma_x\in\CGR(x,\eta)$ and $\Gamma_y\in\CGR(y,\eta)$. By Lemma~\ref{lemma:correction}(3), $\Gamma_x$ and $\Gamma_y$ are contained in $A$. If now $m\in\WW_{\eta}^A$ separates $x$ from $y$ then it also separates $\Gamma_x$ from $\Gamma_y$ and hence $\xi_{\Gamma_x}\neq\xi_{\Gamma_y}$, a contradiction.

(2) This readilfy follows from (1).
\end{proof}

To get a better understanding of combinatorial sectors, we first show that, given an element $\xi\in\CCC_1(X)\setminus X^{(0)}$, one can choose a sequence of chambers $(x_n)_{n\in\NN}$ of $X$ converging to $\xi$ in a nice and controlled way. Here, by ``nice'' we mean that $\Gamma:=(x_n)_{n\in\NN}$ can be chosen to be a CGR, and by ``controlled'' we mean that we may impose further restrictions on $\Gamma$.

\begin{lemma}\label{lemma:description_CCC1X}
Let $\xi\in\CCC_1(X)\setminus X^{(0)}$. Then there is some $\eta\in\partial X$ and some straight $\Gamma=(y_n)_{n\in\NN}\in\CGR(y_0,\eta)$ such that $\xi_{\Gamma}=\xi$.
\end{lemma}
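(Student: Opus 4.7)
The plan is to work inside an apartment $A$ containing $\xi$ in its combinatorial bordification, exploit the properness of the Davis realisation of a Coxeter complex to extract an asymptotic direction $\eta\in\partial A$, and then produce a straight CGR to $\eta$ from a carefully chosen base chamber that realises $\xi$.

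First, since $\CCC_1(X)=\bigcup_A\CCC_1(A)$, I pick an apartment $A$ of $X$ with $\xi\in\CCC_1(A)\setminus A^{(0)}$, together with a sequence of chambers $(C_n)_{n\in\NN}$ in $A$ converging to $\xi$ in $\CCC_1(A)$. Since $\xi\notin A^{(0)}$, after extraction I may assume $d_{\Ch}(C_0,C_n)\to\infty$; letting $x_n\in X^{(0)}$ denote the barycentre of $C_n$, this also gives $d(x_0,x_n)\to\infty$. The apartment $A$ is a proper CAT(0) space (being the Davis realisation of a locally finite Coxeter complex), so the space of directions at $x_0$ is compact. Extracting a further subsequence, the initial directions of $[x_0,x_n]$ converge, hence $x_n\to\eta$ in the cone topology for some $\eta\in\partial A\subseteq\partial X$.

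Next, I analyse the sides of walls one by one. For each wall $m$ of $A$ with $m\notin\WW_\eta^A$, the cone-topology convergence $x_n\to\eta$ forces $x_n$ to be eventually strictly on the side of $m$ containing the tail of $[x_0,\eta)$, and this must coincide with the eventual side of $(C_n)$ --- the ``$\xi$-side'' of $m$ --- by the description of convergence in $\CCC_1(A)$ recalled in \S\ref{subsection:CBOAB}. I then choose a chamber $y_0\in A^{(0)}$ lying on the $\xi$-side of every wall in $\WW_\eta^A$, and consider the straight CGR $\Gamma\in\CGR(y_0,\eta)$; by Lemma~\ref{lemma:correction}(3), $\Gamma\subseteq A$. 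For $m\notin\WW_\eta^A$, the ray $[y_0,\eta)$ is eventually strictly on the $\eta$-side of $m$, hence so is $\Gamma$, matching the $\xi$-side; for $m\in\WW_\eta^A$, both $[y_0,\eta)$ and $\Gamma$ stay on the side of $m$ containing $y_0$, which matches the $\xi$-side by construction. The $\CCC_1(A)$ convergence criterion then gives $\xi_\Gamma=\xi$.

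The main obstacle is ensuring the existence of $y_0$: equivalently, that the intersection of the $\xi$-side half-apartments over $m\in\WW_\eta^A$ contains a chamber. This can fail for the $\eta$ extracted from the direction of $(x_n)$; for instance, when $(C_n)$ drifts past infinitely many parallel walls all containing $\eta$ at infinity, the $\xi$-sides of these walls are simultaneously incompatible. The fix is to refine $\eta$: replace it by a direction in the relative interior of the smallest spherical face of $\partial A$ lying on the $\xi$-side at infinity of every wall of $A$. This refinement shrinks $\WW_\eta^A$ to a subfamily whose $\xi$-sides admit a common chamber, and then a sufficiently late term $C_N$ of the sequence serves as $y_0$.
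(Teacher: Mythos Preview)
Your argument up through the identification of the obstacle is correct and essentially parallels the paper's: for walls $m\notin\WW_\eta^A$ the $\xi$-side must coincide with the side containing the tail of $[x_0,\eta)$, and a straight CGR from any chamber $y_0$ lying on the $\xi$-side of every wall in $\WW_\eta^A$ will then converge to $\xi$. You are also right that such a $y_0$ need not exist for the cone-topology limit $\eta$ of an arbitrary sequence $(x_n)\to\xi$; your parallel-walls example is precisely the obstruction.

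The gap is in your fix. The phrase ``the smallest spherical face of $\partial A$ lying on the $\xi$-side at infinity of every wall'' is not well-defined as stated (minimal faces of an intersection of closed boundary half-spaces would be vertices, not the open cell you seem to want), and it presupposes a simplicial structure on $\partial A$ that is only canonical for special types such as affine. More seriously, once you replace $\eta$ by some $\eta'$ you must re-establish that for every wall $m\notin\WW_{\eta'}^A$ the $\xi$-side equals the $\eta'$-side; your earlier justification used the cone-topology convergence $x_n\to\eta$ and does not transfer to $\eta'$. The closing claims (that the refinement ``shrinks $\WW_\eta^A$ to a subfamily whose $\xi$-sides admit a common chamber'' and that a late $C_N$ serves as $y_0$) are assertions, not arguments.

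For comparison, the paper does not change $\eta$. Instead it extracts a subsequence of $(x_n)$ lying inside a fixed tubular neighbourhood $N_\epsilon([x_0,\eta))$, and then uses the local finiteness of the apartment to show that only \emph{finitely many} walls of $\WW_\eta^A$ can meet $N_\epsilon([x_0,\eta))$ (any such wall must contain a subray of $[x_0,\eta)$ in its $\epsilon$-neighbourhood, and a ball of radius $\epsilon$ meets only boundedly many walls). Hence some tail $(x_n)_{n\geq k}$ already lies on a fixed side of every wall in $\WW_\eta^A$, and one may take $y_0:=x_k$; no appeal to the structure of $\partial A$ is needed.
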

\begin{proof}
Let $A$ be an apartment of $X$ with $\xi\in\CCC_1(A)$. Let $(x_n)_{n\in\NN}$ be a sequence of chambers of $A$ converging to $\xi$. Since the space $A$ is proper, the sequence of geodesic segments $([x_0,x_n])_{n\in\NN}$ subconverges to some geodesic ray $r_0:=[x_0,\eta)$ for some $\eta\in\partial A$. In other words, up to extracting a subsequence, we may assume that $(x_n)_{n\in\NN}$ is contained in an $\epsilon$-neighbourhood $N_{\epsilon}(r_0)\subseteq A$ of $r_0$ for some $\epsilon>0$. 

We claim that there exists a finite subset $S\subseteq \WW^A_{\eta}$ such that for each $m\in\WW^A_{\eta}\setminus S$, the neighbourhood $N_{\epsilon}(r_0)$ is entirely contained in one of the half-spaces delimited by $m$. Indeed, any wall in $\WW^A_{\eta}$ intersecting $N_{\epsilon}(r_0)$, say in $y$, contains the geodesic ray $[y,\eta)\subseteq N_{\epsilon}(r_0)$ (see e.g. \cite[Theorem~II.2.13]{BHCAT0}), and hence a subray of $r_0$ in an $\epsilon$-neighbourhood. On the other hand, since $A$ is locally finite, there is some $N\in\NN$ such that any ball of radius $\epsilon$ in $A$ intersects at most $N$ walls. In particular, there are at most $N$ walls intersecting $N_{\epsilon}(r_0)$, whence the claim.

Recall that for any wall $m$ of $A$, the sequence $(x_n)_{n\in\NN}$ eventually remains on the same side of $m$; in particular, there is some $k\in\NN$ such that $(x_n)_{n\geq k}$ is entirely contained in some half-space associated to $m$ for each $m\in S$. Hence for any $n\geq k$, the walls separating $x_k$ from $x_n$ do not lie in $\WW^A_{\eta}$. Let $\Gamma=(y_n)_{n\in\NN}\in\CGR(x_k,\eta)$ be straight (thus $y_0=x_k$). Then by Lemma~\ref{lemma:correction}(3), we know that $\Gamma\subseteq \Phi_{\eta}(x_k)\cap A$. Since for any $n\geq k$ and any wall $m\in\WW_{\eta}^A$, the chambers $x_{n},x_k,y_{n}$ all lie on the same side of $m$, we conclude as in the proof of  Lemma~\ref{lemma:correction}(2) that $\xi_{\Gamma}=\xi$, as desired.
\end{proof}

\begin{prop}\label{prop:descriptionQ_nr1}
Let $x\in X^{(0)}$ and $\xi\in\CCC_1(X)\setminus X^{(0)}$. Then $$Q(x,\xi)=\{y\in X^{(0)} \ | \ \textrm{$y$ is on a CGR starting from $x$ and converging to $\xi$}\}.$$
\end{prop}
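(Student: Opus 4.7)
My plan is to prove the two inclusions of the claimed equality separately.

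\smallskip

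\noindent\textbf{Inclusion $\supseteq$.} For this direction I would argue essentially tautologically. If $y$ is on a CGR $\Gamma=(x_n)_{n\in\NN}$ with $x_0=x$ and $\xi_\Gamma=\xi$, say $y=x_j$, then the sequence $(x_n)_{n\in\NN}$ itself converges to $\xi$ in $\CCC_1(X)$, so I can use it as the defining sequence for $Q(x,\xi)$. For each $n\geq j$, the initial segment $(x_0,\dots,x_n)$ is a minimal gallery from $x$ to $x_n$ containing $y=x_j$, hence $y\in\Conv(x,x_n)$, and therefore $y\in\bigcap_{n\geq j}\Conv(x,x_n)\subseteq Q(x,\xi)$.

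\smallskip

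\noindent\textbf{Inclusion $\subseteq$.} Given $y\in Q(x,\xi)$, my strategy is to construct a CGR from $x$ through $y$ converging to $\xi$ by concatenating a finite minimal gallery from $x$ through $y$ to a chamber $y_K$ with a suitable tail of a straight CGR pointing towards $\xi$. To this end, I would first apply Lemma~\ref{lemma:description_CCC1X} to obtain $\eta\in\partial X$ and a straight CGR $\Gamma_0=(y_n)_{n\in\NN}$ with $\xi_{\Gamma_0}=\xi$. The sequence $(y_n)_{n\in\NN}$ converges to $\xi$, so using it as defining sequence in $Q(x,\xi)$ yields some $k_1\in\NN$ with $y\in\Conv(x,y_n)$, equivalently $d_{\Ch}(x,y_n)=d_{\Ch}(x,y)+d_{\Ch}(y,y_n)$, for all $n\geq k_1$. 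Next, applying Lemma~\ref{lemma:preparation} to the chamber $x$ and the CGR $\Gamma_0$ provides some $k_2\in\NN$ such that $\Gamma_{xy_{k_2}}\cdot(y_n)_{n\geq k_2}$ is a CGR for any combinatorial geodesic $\Gamma_{xy_{k_2}}$ from $x$ to $y_{k_2}$; a short chamber-distance computation then shows that this property persists when $k_2$ is replaced by any larger index $K\geq k_2$.

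I would then set $K:=\max\{k_1,k_2\}$ and choose any minimal galleries $\Gamma_{xy}$ from $x$ to $y$ and $\Gamma_{yy_K}$ from $y$ to $y_K$. The identity $d_{\Ch}(x,y_K)=d_{\Ch}(x,y)+d_{\Ch}(y,y_K)$ coming from $y\in\Conv(x,y_K)$ guarantees that $\Gamma_{xy}\cdot\Gamma_{yy_K}$ is itself a minimal gallery from $x$ to $y_K$ and in particular visits $y$. Using it as the finite prefix in the previous step, the concatenation
\[
\Gamma_{xy}\cdot\Gamma_{yy_K}\cdot(y_n)_{n\geq K}
\]
is a CGR starting at $x$, passing through $y$, and whose tail coincides with a tail of $\Gamma_0$; it therefore converges to $\xi$, as required.

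\smallskip

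\noindent\textbf{Main obstacle.} The delicate point is to reconcile, at the junction chamber $y_K$, two a priori unrelated gate-type identities for $d_{\Ch}(x,y_K)$: one ensures the minimality of the infinite concatenation (Lemma~\ref{lemma:preparation}), the other ensures that the finite prefix passes through $y$ (membership in $Q(x,\xi)$). Both are achieved simultaneously by pushing the break point $K$ far enough into $\Gamma_0$.
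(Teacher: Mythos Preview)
Your proof is correct and follows essentially the same approach as the paper's: both combine Lemma~\ref{lemma:description_CCC1X} with Lemma~\ref{lemma:preparation} to obtain a CGR from $x$ converging to $\xi$, and then replace an initial segment by a minimal gallery through $y$. The only difference is organizational---the paper first builds the CGR from $x$ (so that replacing any initial segment is automatically legitimate) and then locates $y$ on it, whereas you keep the two constraints separate and reconcile them via $K=\max\{k_1,k_2\}$.
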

\begin{proof}
Let $\Gamma=(x_n)_{n\in\NN}$ be a CGR with $x_0=x$ and converging to $\xi$. To prove the inclusion $\supseteq$, we have to show that for any $k\in\NN$, the chamber $x_k$ belongs to $Q(x,\xi)$. But as $x_k\in \Conv(x,x_n)$ for every $n\geq k$, this is clear.

Conversely, let $y\in Q(x,\xi)$. By Lemmas~\ref{lemma:preparation} and \ref{lemma:description_CCC1X}, there exists a CGR $\Gamma=(x_n)_{n\in\NN}$ starting from $x$ and converging to $\xi$. Let $n\in\NN$ be such that $y\in\Conv(x,x_n)$. Since replacing the portion of $\Gamma$ between $x$ and $x_n$ by some combinatorial geodesic from $x$ to $x_n$ passing through $y$ still yields a CGR, the lemma follows.
\end{proof}

We next wish to prove a refinement of Proposition~\ref{prop:descriptionQ_nr1} by relating the combinatorial bordification to the visual boundary of $X$.

\begin{figure}
\centering
\includegraphics[trim = 10mm 20mm 0mm 0mm, clip,width=14cm]{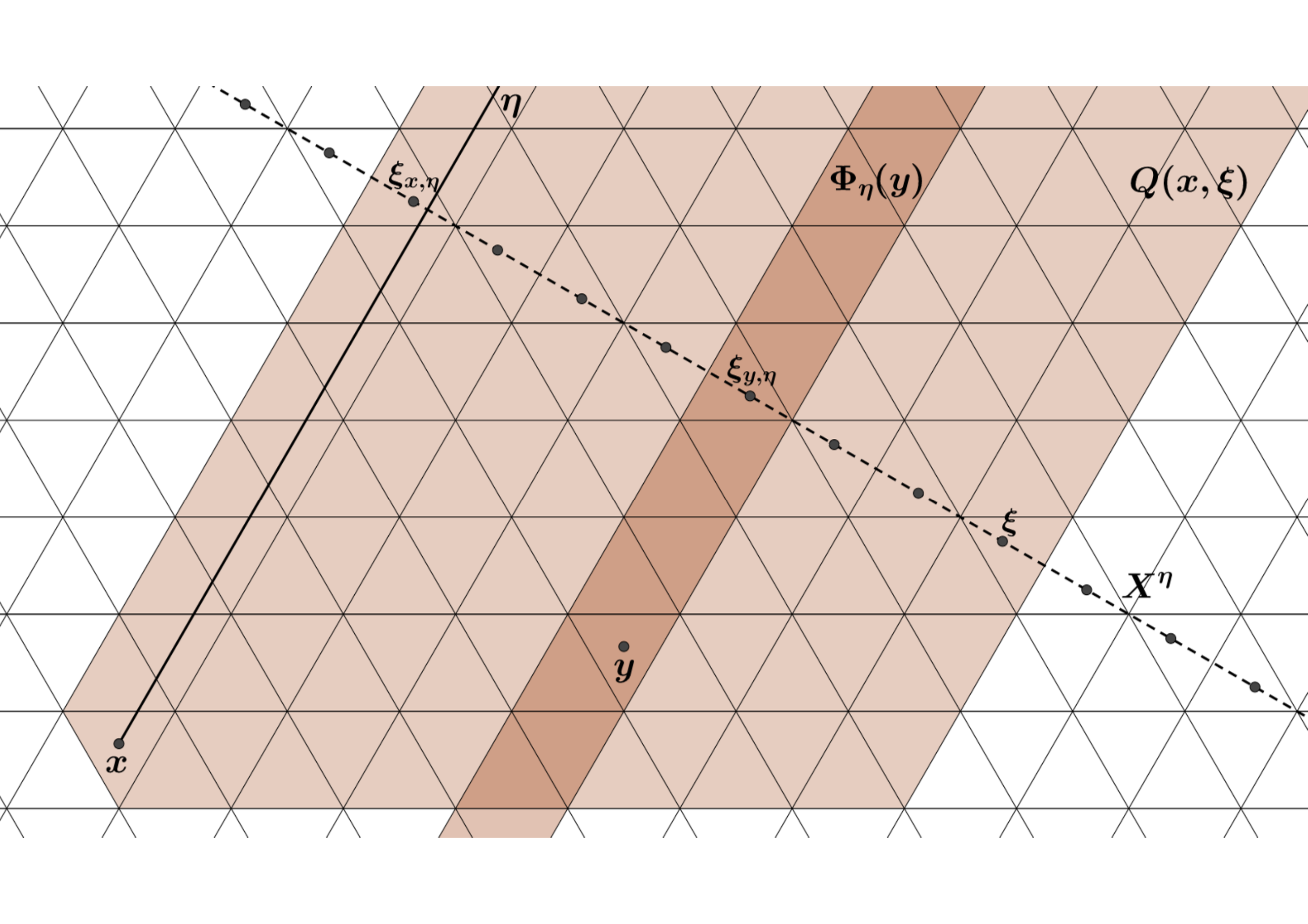}
\captionof{figure}{Singular direction}
 \label{figure:singular}
\end{figure}

For $\eta\in\partial X$, we define the {\bf transversal graph} to $X$ in the direction $\eta$ as the graph $X^{\eta}$ with vertex set
$$\CCC_{\eta}:=\{\xi_{x,\eta} \ | \ x\in X^{(0)}\}\subseteq \CCC_1(X)\setminus X^{(0)}$$
and such that $\xi_{x,\eta},\xi_{y,\eta}\in \CCC_{\eta}$ with $\xi_{x,\eta}\neq\xi_{y,\eta}$ are {\bf adjacent} (i.e. connected by an edge) if and only if there exist adjacent chambers $x',y'\in X^{(0)}$ such that $\xi_{x',\eta}=\xi_{x,\eta}$ and $\xi_{y',\eta}=\xi_{y,\eta}$. The elements of $\CCC_{\eta}$ will also be called {\bf chambers}, and we define the notions of galleries and chamber distance in $X^{\eta}$ as in \S\ref{subsection:Buildings}. Note that by Lemma~\ref{lemma:description_CCC1X}, any $\xi\in \CCC_1(X)\setminus X^{(0)}$ is of the form $\xi=\xi_{x,\eta}$ for some $x\in X^{(0)}$ and $\eta\in\partial X$, that is,
\begin{equation*}
\CCC_1(X)=X^{(0)}\cup \bigcup_{\eta\in\partial X}\CCC_{\eta}.
\end{equation*}

\begin{example}
In the context of Example~\ref{example:A21}, assume that $X$ consists of a single apartment $A$ and that $\eta\in\partial A$ is a singular direction. Then $X^{\eta}$ is a simplicial line (the dashed line on Figure~\ref{figure:singular}). The $\Phi_{\eta}(y)$ for $y\in A^{(0)}$ are stripes obtained as the convex hull of two ``adjacent'' walls of $\WW^A_{\eta}$, namely, of walls of $A$ in the direction of $\eta$.
\end{example}

Here is another description of $\CCC_{\eta}$, in terms of CGR's.

\begin{prop}\label{prop:description_Xieta}
Let $x\in X^{(0)}$ and $\eta\in\partial X$. Then
$$\CCC_{\eta}=\{\xi\in \CCC_1(X)\setminus X^{(0)} \ | \ \textrm{$\xi=\xi_{\Gamma}$ for some $\Gamma\in\CGR(x,\eta)$}\}.$$
\end{prop}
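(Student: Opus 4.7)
My plan is to prove both inclusions, using Lemma~\ref{lemma:preparation} for one direction and a finiteness argument borrowed from the proof of Lemma~\ref{lemma:description_CCC1X} for the other.

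For the inclusion $\CCC_{\eta}\subseteq\{\xi_{\Gamma} : \Gamma\in\CGR(x,\eta)\}$: given $\xi_{y,\eta}\in\CCC_{\eta}$, I would fix a straight $\Gamma_{y}=(y_{n})_{n\in\NN}\in\CGR(y,\eta)$ with $\xi_{\Gamma_{y}}=\xi_{y,\eta}$. Applying Lemma~\ref{lemma:preparation} to $\Gamma_{y}$ with basepoint $x$ produces some $k\in\NN$ such that $\Gamma:=\Gamma_{xy_{k}}\cdot(y_{n})_{n\geq k}$ is a CGR for any combinatorial geodesic $\Gamma_{xy_{k}}$ from $x$ to $y_{k}$. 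The tail of $\Gamma$ coincides with that of $\Gamma_{y}$ and is asymptotic to $[x,\eta)$, so $\Gamma\in\CGR(x,\eta)$; and since convergence in $\CCC_{1}(X)$ depends only on the tail of the sequence, $\xi_{\Gamma}=\xi_{\Gamma_{y}}=\xi_{y,\eta}$.

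For the reverse inclusion, fix $\Gamma=(x_{n})_{n\in\NN}\in\CGR(x,\eta)$ and let $A$ be an apartment containing $\Gamma$. Because $[x,\eta)$ is the limit (for the cone topology) of the segments $[x,x_{n}]\subseteq A$, we have $[x,\eta)\subseteq A$ and hence $A\in\AAA_{\eta}$. By definition of $\CGR(x,\eta)$, $\Gamma$ is contained in a tubular neighbourhood $N_{K}([x,\eta))$ for some $K>0$. The key step is to reuse the argument in the proof of Lemma~\ref{lemma:description_CCC1X}, which produces a finite subset $S\subseteq\WW_{\eta}^{A}$ such that every wall $m\in\WW_{\eta}^{A}\setminus S$ has $N_{K}([x,\eta))$ entirely contained in one of the half-spaces of $A$ it delimits. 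Since $\Gamma\subseteq N_{K}([x,\eta))$, $\Gamma$ can cross only walls of $S$ amongst those of $\WW_{\eta}^{A}$, and therefore crosses only finitely many walls of $\WW_{\eta}^{A}$ in total.

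It then suffices to choose $k\in\NN$ past all such crossings and set $y:=x_{k}$. The tail $(x_{n})_{n\geq k}$ is a CGR in $\CGR(y,\eta)$ contained in $A$ that crosses no wall of $\WW_{\eta}^{A}$; Lemma~\ref{lemma:correctionbis}(2) therefore gives $\xi_{(x_{n})_{n\geq k}}=\xi_{y,\eta}$, and as this limit equals $\xi_{\Gamma}$ (same tail), we obtain $\xi_{\Gamma}=\xi_{y,\eta}\in\CCC_{\eta}$. The main technical point requiring care is the finiteness of $S$, which is handed to us verbatim from Lemma~\ref{lemma:description_CCC1X} and ultimately rests on the local finiteness of the apartment~$A$.
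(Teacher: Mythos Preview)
Your proof is correct and follows essentially the same approach as the paper. For the $\subseteq$ direction the paper simply says ``clear'' while you spell out the use of Lemma~\ref{lemma:preparation}; for the reverse inclusion the paper packages your finiteness argument (that $\Gamma$ crosses only finitely many walls of $\WW_{\eta}^{A}$, by local finiteness of $A$) into a separate Lemma~\ref{lemma:galleries_Phieta} stated immediately afterwards, then applies Lemma~\ref{lemma:correctionbis}(2) exactly as you do --- so the only difference is organizational.
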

\begin{proof}
The inclusion $\subseteq$ is clear. Conversely, if $\xi=\xi_{\Gamma}$ for some $\Gamma=(x_n)_{n\in\NN}\in\CGR(x,\eta)$, then by Lemma~\ref{lemma:galleries_Phieta} below, there is some $k\in\NN$ such that $\Gamma':=(x_n)_{n\geq k}\subseteq \Phi_{\eta}(x_k)$. Hence $\xi=\xi_{\Gamma'}=\xi_{x_k,\eta}\in \CCC_{\eta}$ by Lemma~\ref{lemma:correctionbis}(2).
\end{proof}

\begin{lemma}\label{lemma:galleries_Phieta}
Let $x\in X^{(0)}$ and $\eta\in\partial X$. 
For any $\Gamma=(x_n)_{n\in\NN}\in\CGR(x,\eta)$, the sequence $(\xi_{x_n,\eta})_{n\in\NN}\subseteq \CCC_{\eta}$ is eventually constant. In other words, there is some $k\in\NN$ such that $(x_n)_{n\geq k}\subseteq \Phi_{\eta}(x_k)$.
\end{lemma}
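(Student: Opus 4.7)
The plan is to reduce the claim, via Lemma~\ref{lemma:correctionbis}(2), to showing that $\Gamma$ crosses only finitely many walls of $\WW^A_\eta$ for a suitable apartment $A$, and to then invoke the finiteness argument already developed in the proof of Lemma~\ref{lemma:description_CCC1X}.

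First, I would pick an apartment $A$ of $X$ containing the infinite minimal gallery $\Gamma$. Since $\Gamma\subseteq A$ lies at finite Hausdorff distance $\epsilon$ from the geodesic ray $[x,\eta)$, the point $\eta$ must belong to $\partial A$: projecting $[x,\eta)$ onto the closed convex subspace $A$ provides a $1$-Lipschitz path in $A$ starting at $x$ and at distance at most $\epsilon$ from $[x,\eta)$, from which one extracts a geodesic ray in $A$ asymptotic to $[x,\eta)$; uniqueness of geodesic rays in $X$ forces $[x,\eta)\subseteq A$. In particular, $A\in\mathcal{A}_\eta$ and $\Gamma\subseteq N_\epsilon([x,\eta))\cap A$.

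Arguing verbatim as in the proof of Lemma~\ref{lemma:description_CCC1X}, the collection of walls $m\in\WW^A_\eta$ meeting $N_\epsilon([x,\eta))$ is finite: any such $m$ contains an entire ray $[y,\eta)\subseteq m\cap N_\epsilon([x,\eta))$ (obtained from $y\in m\cap N_\epsilon([x,\eta))$ by convexity of the CAT(0) distance function applied to the asymptotic rays $[y,\eta)$ and $[x,\eta)$), so a tail of $[x,\eta)$ lies in $N_\epsilon(m)$; the local finiteness of the apartment $A$, viewed as a proper CAT(0) polyhedral complex on which the Coxeter group acts cocompactly, then provides the required uniform bound. Any wall of $A$ crossed by $\Gamma$ contains a panel common to two consecutive chambers of $\Gamma$ and thus meets $N_{\epsilon'}([x,\eta))$ with $\epsilon'$ equal to $\epsilon$ plus the chamber diameter; hence $\Gamma$ crosses only finitely many walls of $\WW^A_\eta$. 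Choosing $k\in\NN$ larger than the last such step, the tail $(x_n)_{n\geq k}\in\CGR(x_k,\eta)$ lies in $A$ and crosses no wall of $\WW^A_\eta$, so Lemma~\ref{lemma:correctionbis}(2) yields $(x_n)_{n\geq k}\subseteq\Phi_\eta(x_k)$, and hence $\xi_{x_n,\eta}=\xi_{x_k,\eta}$ for every $n\geq k$ by Proposition~\ref{prop:alt_desc_Phi_eta}.

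The main subtlety is the preliminary step of securing $[x,\eta)\subseteq A$: without this, the set $\WW^A_\eta$ bears no controlled relation to $\Gamma$ and the finiteness argument of Lemma~\ref{lemma:description_CCC1X} cannot be recycled. Once this preliminary is in place, the remainder is essentially a repackaging of that earlier finiteness argument together with Lemma~\ref{lemma:correctionbis}(2).
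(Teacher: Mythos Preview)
Your argument is correct and follows essentially the same route as the paper: pick an apartment $A\supseteq\Gamma$ with $[x,\eta)\subseteq A$, show that $\Gamma$ crosses only finitely many walls of $\WW^A_\eta$ using the local finiteness of $A$, and conclude via Lemma~\ref{lemma:correctionbis}(2). The only cosmetic difference is that you recycle the finiteness count from Lemma~\ref{lemma:description_CCC1X} (walls of $\WW^A_\eta$ meeting a tubular neighbourhood of $[x,\eta)$ are finite), whereas the paper counts directly: since $[x,\eta)$ meets no wall of $\WW^A_\eta$, any such wall crossed by $\Gamma$ before a chamber $y$ must also cross the short segment $[y,y']$ with $y'\in[x,\eta)$, and only boundedly many walls meet a segment of length~$\leq\epsilon$.

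One remark: the step you flag as the ``main subtlety'' --- securing $[x,\eta)\subseteq A$ --- is actually routine and does not require the projection argument. Apartments in the Davis realisation are closed convex subspaces, so the geodesic segments $[x,x_n]\subseteq A$ converge to $[x,\eta)\subseteq A$; the paper simply records this as ``(thus $A\in\mathcal{A}_\eta$)''. The real content lies in the finiteness count, which you handle correctly.
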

\begin{proof}
Let $A$ be an apartment containing $\Gamma$ (thus $A\in \mathcal{A}_{\eta}$), and assume for a contradiction that $(\xi_{x_n,\eta})_{n\in\NN}\subseteq \CCC_{\eta}$ is not eventually constant. Thus there are infinitely many walls $\{m_i \ | \ i\in\NN\}\subseteq\WW^A_{\eta}$ crossed by $\Gamma$ (see Lemma~\ref{lemma:correctionbis}).

Since $[x,\eta)\subseteq A$ is not contained in any wall, it does not intersect any wall $m_i$, $i\in\NN$.
On the other hand, $\Gamma$ is contained in an $\epsilon$-neighbourhood of $[x,\eta)$ for some $\epsilon>0$. For each $y\in\Gamma$, let $y'\in [x,\eta)$ with $d(y,y')\leq\epsilon$. Then there is some $n\in\NN$ such that $[y,y']$ intersects at most $n$ walls of $A$ for any $y\in\Gamma$. Now, let $y\in\Gamma\cap A^{(0)}$ be such that the walls $m_1,\dots,m_{n+1}$ intersect the combinatorial geodesic from $x$ to $y$ contained in $\Gamma$. Then the walls $m_1,\dots,m_{n+1}$ must all intersect the geodesic segment $[y,y']$, yielding the desired contradiction.
\end{proof}

We can now formulate the announced refinement of Proposition~\ref{prop:descriptionQ_nr1}.

\begin{theorem}\label{thm:descriptionQ}
Let $x\in X^{(0)}$ and $\xi\in\CCC_{\eta}$ for some $\eta\in\partial X$. Then $$Q(x,\xi)=\{y\in X^{(0)} \ | \ \textrm{$y$ is on a CGR from $x$ to $\eta$ and converging to $\xi$}\}.$$
\end{theorem}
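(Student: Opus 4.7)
\emph{Proof plan.} The inclusion $\supseteq$ will be immediate from Proposition~\ref{prop:descriptionQ_nr1}: a CGR from $x$ to $\eta$ is in particular a CGR starting from $x$, so if $y$ lies on such a CGR converging to $\xi$, then $y\in Q(x,\xi)$.

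For the harder inclusion $\subseteq$, my plan is, given $y\in Q(x,\xi)$, to construct a single CGR $\Gamma\in\CGR(x,\eta)$ that passes through $y$ and converges to $\xi$. Since $\xi\in\CCC_{\eta}$, I would first pick some $z\in X^{(0)}$ with $\xi_{z,\eta}=\xi$ and a straight CGR $\Gamma^\circ=(z_n)_{n\in\NN}\in\CGR(z,\eta)$, which converges to $\xi$ by Lemma~\ref{lemma:correction}(3). The idea is then to prepend to a sufficiently deep tail of $\Gamma^\circ$ a combinatorial geodesic from $x$ chosen to pass through $y$.

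Two ingredients will make this work. On the one hand, the definition of $Q(x,\xi)$ applied to the sequence $(z_n)$ yields some $k_0\in\NN$ with $y\in\Conv(x,z_n)$ for all $n\geq k_0$, so that for any large enough $M$ one can find a combinatorial geodesic $\Gamma_{xz_M}$ from $x$ to $z_M$ passing through $y$. On the other hand, Lemma~\ref{lemma:preparation}, applied to the reindexed tail CGR $(z_{n+k_0})_{n\in\NN}$ and the base point $x$, will furnish an integer $M\geq k_0$ such that for any combinatorial geodesic $\Gamma_{xz_M}$ from $x$ to $z_M$, the concatenation $\Gamma:=\Gamma_{xz_M}\cdot(z_n)_{n\geq M}$ is a CGR. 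Choosing such $M$ and such $\Gamma_{xz_M}$ passing through $y$ produces the desired combinatorial geodesic ray.

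It will remain to check that $\Gamma$ actually lies in $\CGR(x,\eta)$ and converges to $\xi$. The tail $(z_n)_{n\geq M}$ is a subray of $\Gamma^\circ\in\CGR(z,\eta)$, hence at bounded Hausdorff distance from $[z,\eta)$; since $[z,\eta)$ and $[x,\eta)$ are asymptotic, $\Gamma$ will be at bounded Hausdorff distance from $[x,\eta)$, proving $\Gamma\in\CGR(x,\eta)$. Moreover, $\xi_\Gamma$ only depends on the eventual behaviour of the sequence, which agrees with $\Gamma^\circ$ past index $M$, so $\xi_\Gamma=\xi_{\Gamma^\circ}=\xi$. The most delicate step, which I expect to be the main (though modest) obstacle, is the simultaneous choice of $M$: it must be large enough both to guarantee $y\in\Conv(x,z_M)$ and to fall in the range where Lemma~\ref{lemma:preparation} applies. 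Reindexing $\Gamma^\circ$ past $z_{k_0}$ before invoking Lemma~\ref{lemma:preparation} handles this cleanly.
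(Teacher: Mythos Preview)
Your proof is correct and follows the same basic strategy as the paper: for the inclusion $\subseteq$, one produces a CGR from $x$ to $\eta$ converging to $\xi$ and arranges that it passes through the given $y\in Q(x,\xi)$ by adjusting a finite initial segment.

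The only difference is in how that CGR is obtained. The paper packages its existence into Proposition~\ref{prop:description_Xieta} (whose ``clear'' inclusion $\CCC_\eta\subseteq\{\xi_\Gamma:\Gamma\in\CGR(x,\eta)\}$ is exactly the step you carry out by hand): once one already has a CGR $(x_n)\in\CGR(x,\eta)$ with $\xi_{(x_n)}=\xi$, one simply picks $n$ with $y\in\Conv(x,x_n)$ and replaces the initial segment by a minimal gallery through $y$, with no reindexing needed. You instead start from a straight CGR $(z_n)$ based at some $z$ with $\xi_{z,\eta}=\xi$ and splice directly, which forces you to synchronise the index given by Lemma~\ref{lemma:preparation} with the index $k_0$ coming from $y\in\Conv(x,z_n)$; your reindexing trick handles this correctly. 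The upshot is that your argument is a bit more self-contained (it bypasses Proposition~\ref{prop:description_Xieta} and hence Lemma~\ref{lemma:galleries_Phieta}), while the paper's is slightly shorter by leaning on that proposition.
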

\begin{proof}
The inclusion $\supseteq$ follows from Proposition~\ref{prop:descriptionQ_nr1}. 
The converse inclusion is proved exactly as in the proof of Proposition~\ref{prop:descriptionQ_nr1}, the existence of a CGR from $x$ to $\eta$ and converging to $\xi$ following from Proposition~\ref{prop:description_Xieta}.
\end{proof}

Given $x\in X^{(0)}$ and $\eta\in\partial X$, we next wish to show that the combinatorial sector $Q(x,\xi_{x,\eta})$ is ``minimal'' in the direction $\eta$, in the sense that it is contained in every other combinatorial sector $Q(x,\xi)$ with $\xi\in\CCC_{\eta}$ (see Proposition~\ref{prop:Qxixinv} below). To this end, we first need a more precise version of Lemma~\ref{lemma:description_CCC1X}, by further improving our control of CGR's converging to a given $\xi\in\CCC_1(X)\setminus X^{(0)}$.

\begin{lemma}\label{lemma:galleries_Phieta2}
Let $x\in X^{(0)}$ and $\eta\in\partial X$. If $y\in X^{(0)}$ is on some CGR from $x$ to $\eta$, then $y$ is on some $\Gamma\in\CGR(x,\eta)$ converging to $\xi_{y,\eta}$.
\end{lemma}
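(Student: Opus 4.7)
\smallskip

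\noindent\textbf{Proof plan.} Write the given CGR as $\Gamma_0=(x_n)_{n\in\NN}\in\CGR(x,\eta)$ with $x_0=x$ and $x_m=y$, and let $A_0$ be an apartment of $X$ containing $\Gamma_0$; thus $A_0\in\AAA_{\eta}$ and $y\in A_0^{(0)}$. The strategy is to pick a straight CGR $\Gamma_y\in\CGR(y,\eta)$, which by Lemma~\ref{lemma:correction}(3) automatically lies in $A_0$, set $\Gamma_{xy}:=(x_0,\dots,x_m)$, and consider the concatenation $\Gamma:=\Gamma_{xy}\cdot\Gamma_y$. Once $\Gamma$ is known to be a CGR, the straightness of its tail $\Gamma_y$ forces $\xi_{\Gamma}=\xi_{\Gamma_y}=\xi_{y,\eta}$; moreover $\Gamma\in\CGR(x,\eta)$ because $\Gamma_y$ lies at bounded Hausdorff distance from $[y,\eta)$, itself asymptotic to $[x,\eta)$. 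Everything therefore reduces to showing that $\Gamma\subseteq A_0$ is a minimal gallery. Since $\Gamma_{xy}$ and $\Gamma_y$ are both minimal, this is equivalent to asking that no wall of $A_0$ be crossed by both, i.e., that whenever a wall $m$ of $A_0$ separates $x$ from $y$ (with $x\in H_x$ and $y\in H_y$ the corresponding open half-apartments), $\Gamma_y$ does not cross $m$.

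I would establish this by contradiction. Assume $\Gamma_y$ crosses $m$; then it does so exactly once (by minimality), from $H_y$ into $H_x$. Since $\Gamma_y$ is straight and $[y,\eta)$ is unbounded while only finitely many initial chambers of $\Gamma_y$ sit inside $\overline{H_y}$, the ray $[y,\eta)$ must eventually lie in $\overline{H_x}$. Wall convexity together with $y\notin m$ then forces $[y,\eta)\cap m=\{p\}$ for a unique $p$, with $[y,\eta)$ crossing $m$ at $p$ from $H_y$ to $H_x$. A case analysis on the position of $\eta$ relative to $m$ now yields the contradiction. First, if $m\in\WW^{A_0}_{\eta}$, uniqueness of the geodesic ray from $p$ to $\eta$ forces $[p,\eta)\subseteq m$, contradicting $[y,\eta)\cap m=\{p\}$. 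Second, if $m\notin\WW^{A_0}_{\eta}$ and $\eta\in\partial H_y$, convexity of $\overline{H_y}$ gives $[y,\eta)\subseteq\overline{H_y}$, contradicting its being eventually in $H_x$. Third, if $m\notin\WW^{A_0}_{\eta}$ and $\eta\in\partial H_x$, then $[x,\eta)\subseteq\overline{H_x}$, whereas minimality of $\Gamma_0$ guarantees $(x_n)_{n\geq m}\subseteq H_y$ (the wall $m$ being crossed only inside $\Gamma_{xy}$). Combined with $\Gamma_0$ lying in some tubular neighbourhood of $[x,\eta)$ of radius $R$, this forces $d(x_n,m)\leq R$ for all $n\geq m$; projecting $(x_n)_{n\geq m}$ onto the CAT(0) convex subset $m$ then places the cone-topology limit of $(x_n)$ in $\partial m$, so $\eta\in\partial m$, contradicting $m\notin\WW^{A_0}_{\eta}$.

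I expect the third case to be the main obstacle: it mixes the combinatorial fact that a minimal gallery $\Gamma_0$ crosses any wall at most once with the metric and asymptotic control of $\Gamma_0$ against $[x,\eta)$, and calls for a cone-topology argument identifying the boundary limit of a sequence at bounded distance from a convex subset. The remaining cases, as well as the closing verification that the produced $\Gamma$ is a CGR from $x$ to $\eta$ passing through $y$ and converging to $\xi_{y,\eta}$, are essentially formal consequences of wall convexity and the uniqueness of geodesic rays to $\eta$.
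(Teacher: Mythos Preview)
Your proof is correct and follows exactly the paper's strategy: replace the tail of the given CGR at $y$ by a straight $\Gamma_y\in\CGR(y,\eta)$ inside the same apartment and argue by contradiction that no wall can be crossed by both $\Gamma_{xy}$ and $\Gamma_y$. The paper's execution is shorter: rather than your three-case split, it notes that such a wall $m$ would separate the \emph{original} tail $(x_n)_{n\geq m}$ (which cannot recross $m$, by minimality of $\Gamma_0$) from a sub-CGR of $\Gamma_y$, and since both point to $\eta$ this forces $m\in\WW^{A_0}_{\eta}$ in one stroke, contradicting straightness of $\Gamma_y$ via Lemma~\ref{lemma:correction}(3) --- your Case~3 is essentially a metric reproof of this implication, and your Case~1 is immediate from that same lemma.
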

\begin{proof}
Let $\Gamma_{xy\eta}$ be a CGR from $x$ to $\eta$ passing through $y$, and let $\Gamma_{xy}$ (resp. $\Gamma_{y\eta}$) be the combinatorial geodesic from $x$ to $y$ (resp. CGR from $y$ to $\eta$) contained in $\Gamma_{xy\eta}$, so that $\Gamma_{xy\eta}=\Gamma_{xy}\cdot\Gamma_{y\eta}$. Let $A$ be an apartment containing $\Gamma_{xy\eta}$, and let $\Gamma_{y\xi}\subseteq A$ be a straight CGR from $y$ to $\eta$ and converging to $\xi:=\xi_{y,\eta}$. In particular, $\Gamma_{y\xi}$ does not cross any wall in $\WW^A_{\eta}$. We claim that $\Gamma:=\Gamma_{xy}\cdot \Gamma_{y\xi}$ is a CGR, yielding the lemma.

Otherwise, there is a wall $m$ of $A$ that is crossed by both $\Gamma_{xy}$ and $\Gamma_{y\xi}$. Since $m$ cannot be crossed by $\Gamma_{y\eta}$, it separates $\Gamma_{y\eta}$ from $\Gamma_{y'\xi}$ for some CGR $\Gamma_{y'\xi}$ to $\eta$ contained in $\Gamma_{y\xi}$. Since $\Gamma_{y'\xi}$ and $\Gamma_{y\eta}$ are at bounded Hausdorff distance from one another, this implies that $m\in\WW^A_{\eta}$, and hence that $\Gamma_{y\xi}$ cannot cross $m$, a contradiction.
\end{proof}

\begin{prop}\label{prop:Qxixinv}
Let $x\in X^{(0)}$, $\eta\in\partial X$ and $\xi\in\CCC_{\eta}$. Then 
$$Q(x,\xi_{x,\eta})=Q(x,\xi)\cap \Phi_{\eta}(x)=\{y\in Q(x,\xi) \ | \ \xi_{y,\eta}=\xi_{x,\eta}\}.$$
\end{prop}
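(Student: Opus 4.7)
The second equality is immediate from Proposition~\ref{prop:alt_desc_Phi_eta}, which identifies $\Phi_\eta(x)\cap X^{(0)}$ with $\{z\in X^{(0)}:\xi_{z,\eta}=\xi_{x,\eta}\}$. For the first equality, the inclusion $\supseteq$ is painless: if $y\in Q(x,\xi)\cap\Phi_\eta(x)$, Theorem~\ref{thm:descriptionQ} provides a CGR from $x$ through $y$ to $\eta$, and Lemma~\ref{lemma:galleries_Phieta2} modifies it into a CGR through $y$ converging to $\xi_{y,\eta}=\xi_{x,\eta}$, so $y\in Q(x,\xi_{x,\eta})$ by Theorem~\ref{thm:descriptionQ}.

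For the inclusion $\subseteq$, fix $y\in Q(x,\xi_{x,\eta})$. Theorem~\ref{thm:descriptionQ} and Lemma~\ref{lemma:correctionbis}(2) produce a CGR $\Gamma_y$ from $x$ through $y$ converging to $\xi_{x,\eta}$ that is entirely contained in $\Phi_\eta(x)$, so $y\in\Phi_\eta(x)$ is automatic; the real content is to prove $y\in Q(x,\xi)$. My plan is to reduce to a wall-theoretic argument inside a single apartment via a retraction. I would fix an apartment $A$ containing $\Gamma_y$ (so $x,y\in A$ and $[x,\eta),[y,\eta)\subseteq A$), use Proposition~\ref{prop:description_Xieta} to pick $(w_n)_{n\in\NN}\in\CGR(y,\eta)$ converging to $\xi$ and sitting in some apartment $B$ with $[y,\eta)\subseteq A\cap B$, and consider the retraction $\rho:=\rho_{A,y}\co X\to A$. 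Since $\rho|_B\co B\to A$ is an apartment isomorphism fixing $A\cap B$ pointwise, the sequence $(\rho(w_n))_{n\in\NN}$ is a CGR in $A$ from $y$ to $\eta$ contained in a bounded neighbourhood of $[y,\eta)$.

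The core step is then a wall argument in $A$. For each wall $m$ of $A$ separating $x$ from $y$, Lemma~\ref{lemma:correctionbis}(1) ensures $m\notin\WW^A_\eta$, so (as $m$ is not parallel to the ray $[y,\eta)$) the distance from $[y,\eta)(t)$ to $m$ grows linearly in $t$. The tail of $\Gamma_y$ past $y$ sits on the $y$-side of $m$ (the minimal gallery $\Gamma_y$ crosses $m$ only once, necessarily between $x$ and $y$) and stays at bounded Hausdorff distance from $[y,\eta)$; a crossing of $m$ by $[y,\eta)$ would push a tail of $[y,\eta)$ arbitrarily far from this $y$-side tail of $\Gamma_y$, contradicting bounded distance. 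Therefore $[y,\eta)$ stays entirely on the $y$-side of $m$. For $n$ large this forces $\rho(w_n)$ onto the $y$-side of $m$ as well, so $m$ separates $x$ from $\rho(w_n)$. Since only finitely many walls of $A$ separate $x$ from $y$, there is some $N\in\NN$ with $y\in\Conv(x,\rho(w_n))$ in $A$ for all $n\geq N$.

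To transfer this back to $X$, I would combine the resulting equality $d_{\Ch}(x,\rho(w_n))=d_{\Ch}(x,y)+d_{\Ch}(y,\rho(w_n))$ with the retraction identity $d_{\Ch}(y,\rho(w_n))=d_{\Ch}(y,w_n)$ (retractions preserve minimal galleries from their centre chamber) and the inequality $d_{\Ch}(x,\rho(w_n))\leq d_{\Ch}(x,w_n)$; together with the triangle inequality, this yields $d_{\Ch}(x,w_n)=d_{\Ch}(x,y)+d_{\Ch}(y,w_n)$, so $y\in\Conv(x,w_n)$ for all $n\geq N$, whence $y\in Q(x,\xi)$. The delicate step I expect is the wall argument of the third paragraph: it is the place where the combinatorial information carried by $y\in Q(x,\xi_{x,\eta})$ (through the tail of $\Gamma_y$) must be married to the CAT(0) asymptotics of walls not in $\WW^A_\eta$ diverging from any geodesic ray towards $\eta$.
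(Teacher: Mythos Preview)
Your proof is correct, but it takes a genuinely different route from the paper's for the inclusion $Q(x,\xi_{x,\eta})\subseteq Q(x,\xi)$. The paper's key move is to choose the apartment $A$ to contain $Q(x,\xi)$, and then observe that $A$ automatically contains $Q(x,\xi_{x,\eta})$ as well (because any straight $\Gamma\in\CGR(x,\eta)$ lies in every apartment of $\AAA_\eta$ containing $x$, by Lemma~\ref{lemma:correction}(3), so $Q(x,\xi_{x,\eta})=\bigcup_k\bigcap_{n\geq k}\Conv(x,x_n)\subseteq A$). With both sectors sitting in the \emph{same} apartment, no retraction is needed: one takes a CGR $\Gamma_{xy}\cdot\Gamma_{y\eta}$ from $x$ through $y$ converging to $\xi_{x,\eta}$, picks (via Lemma~\ref{lemma:preparation}) a CGR $\Gamma_{y\xi}\subseteq A$ from $y$ converging to $\xi$, and shows directly that $\Gamma_{xy}\cdot\Gamma_{y\xi}$ is again a CGR by the same wall argument you use (a wall crossed twice would have to separate two CGRs to $\eta$, hence lie in $\WW^A_\eta$, contradicting Lemma~\ref{lemma:correctionbis}(2)).

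Your approach instead fixes $A$ around the CGR $\Gamma_y$, retracts a CGR $(w_n)$ converging to $\xi$ from another apartment into $A$, runs the wall argument there, and then transfers the conclusion $y\in\Conv(x,\rho(w_n))$ back to $y\in\Conv(x,w_n)$ via the distance identities for $\rho_{A,y}$. This works, and the retraction-plus-distance-transfer trick is a robust technique; but it is heavier than the paper's argument, which exploits the minimality of $Q(x,\xi_{x,\eta})$ (its containment in every apartment containing any $Q(x,\xi)$) to avoid ever leaving a single apartment. One small point of phrasing: your claim that the distance from $[y,\eta)(t)$ to $m$ ``grows linearly'' is slightly loose---what you actually need (and what holds in any $\CAT$ space) is that this distance function is convex and, since $\eta\notin\partial m$, unbounded, hence tends to infinity; that suffices for your contradiction.
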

\begin{proof}
Note that the second equality holds by Proposition~\ref{prop:alt_desc_Phi_eta}. Let $y\in Q(x,\xi)\cap\Phi_{\eta}(x)$. Then $y$ lies on some CGR from $x$ to $\eta$ by Theorem~\ref{thm:descriptionQ}, and hence also on some $\Gamma\in\CGR(x,\eta)$ converging to $\xi_{y,\eta}$ by Lemma~\ref{lemma:galleries_Phieta2}. But since $\xi_{y,\eta}=\xi_{x,\eta}$ by Proposition~\ref{prop:alt_desc_Phi_eta}, we then have $y\in Q(x,\xi_{x,\eta})$ by Theorem~\ref{thm:descriptionQ}.

Conversely, if $y\in Q(x,\xi_{x,\eta})$ then certainly $y\in\Phi_{\eta}(x)$ by Theorem~\ref{thm:descriptionQ} and Lemma~\ref{lemma:correctionbis}(2), and it remains to show that $y\in Q(x,\xi)$. Let $A$ be an apartment containing $Q(x,\xi)$ (thus $A\in\mathcal{A}_{\eta}$). 

Note first that $A$ contains $Q(x,\xi_{x,\eta})$. Indeed, let $\Gamma=(x_n)_{n\in\NN}\in\CGR(x,\eta)$ be straight, so that $\Gamma$ converges to $\xi_{x,\eta}$ and $\Gamma\subseteq A$ by Lemma~\ref{lemma:correction}(3). Then $Q(x,\xi_{x,\eta})=\bigcup_{k\in\NN}\bigcap_{n\geq k}\Conv(x,x_n)\subseteq A$, as claimed.

Let now $\Gamma_{xy\eta}$ be a CGR from $x$ to $\eta$ passing through $y$ and converging to $\xi_{x,\eta}$ (see Theorem~\ref{thm:descriptionQ}), and let us show that $y$ is also on a CGR from $x$ to $\eta$ converging to $\xi$ (and hence that $y\in Q(x,\xi)$ by Theorem~\ref{thm:descriptionQ}, as desired).

Write $\Gamma_{xy\eta}=\Gamma_{xy}\cdot\Gamma_{y\eta}$, where $\Gamma_{xy}$ is a combinatorial geodesic from $x$ to $y$ and $\Gamma_{y\eta}$ a CGR from $y$ to $\eta$. 
By Lemma~\ref{lemma:preparation}, there is a CGR $\Gamma_{y\xi}\subseteq A$ from $y$ to $\eta$ converging to $\xi$. We claim that $\Gamma_{xy}\cdot \Gamma_{y\xi}$ is still a CGR, as desired.
Otherwise, there is a wall $m$ of $A$ that is crossed by both $\Gamma_{xy}$ and $\Gamma_{y\xi}$. Since $m$ cannot be crossed by $\Gamma_{y\eta}$, it separates $\Gamma_{y\eta}$ from $\Gamma_{y'\xi}$ for some CGR $\Gamma_{y'\xi}$ contained in $\Gamma_{y\xi}$. Since $\Gamma_{y'\xi}$ and $\Gamma_{y\eta}$ are at bounded Hausdorff distance from one another, this implies that $m\in\WW^A_{\eta}$, so that $\Gamma_{xy\eta}$ cannot cross $m$, a contradiction.
\end{proof}

To conclude this section, we give a consequence of hyperbolicity for the building $X$ in terms of the sets $\CCC_{\eta}$.

\begin{lemma}\label{lemma:hyperbolic_condition}
Assume that the building $X$ is hyperbolic. Then $\CCC_{\eta}$ is a bounded set of chambers for each $\eta\in\partial X$. In particular, if $X$ is, moreover, locally finite, then $\CCC_{\eta}$ is finite for all $\eta\in\partial X$.
\end{lemma}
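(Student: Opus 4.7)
The plan is to establish the first claim by uniformly bounding the diameter of the graph $X^\eta$ in terms of the hyperbolicity constant $\delta$ of $X$ and the Coxeter type, and to deduce the second by additionally verifying that $X^\eta$ is locally finite when $X$ is. For the bounded-diameter step, fix $x,y\in X^{(0)}$ and choose straight CGRs $\Gamma_x=(x_n)_{n\in\NN}\in\CGR(x,\eta)$ and $\Gamma_y=(y_n)_{n\in\NN}\in\CGR(y,\eta)$, which converge respectively to $\xi_{x,\eta}$ and $\xi_{y,\eta}$ by Lemma~\ref{lemma:correction}(3). Lemma~\ref{lemma:hyperbolic_basic_prop} places $\Gamma_x$ in the $K$-neighborhood of $[x,\eta)$ and $\Gamma_y$ in that of $[y,\eta)$. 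Since $X$ is a hyperbolic CAT(0) space, the asymptotic rays $[x,\eta)$ and $[y,\eta)$ fellow-travel with a constant $C=C(\delta)$ independent of $x,y$, so for all sufficiently large $n$ there is some $m$ with $d(x_n,y_m)\leq 2K+C$.

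In the Davis realisation, the walls of any apartment are separated in the CAT(0) metric by a uniform constant $\epsilon>0$ depending only on the Coxeter type, so CAT(0) distance controls chamber distance: there is $M=M(K,\delta,\mathrm{type})$ with $d_{\Ch}(x_n,y_m)\leq M$. A minimal gallery of this length in $X$ projects, via $z\mapsto\xi_{z,\eta}$, to a gallery of length at most $M$ in $X^\eta$ from $\xi_{x_n,\eta}=\xi_{x,\eta}$ to $\xi_{y_m,\eta}=\xi_{y,\eta}$, proving that $\CCC_\eta$ has diameter at most $M$ in $X^\eta$.

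For the finiteness under the additional hypothesis that $X$ is locally finite, since a connected bounded-diameter locally finite graph is finite, it suffices to show that $X^\eta$ is locally finite. By Lemma~\ref{lemma:correctionbis}(1), each neighbor of $\xi_{x,\eta}$ in $X^\eta$ arises from a wall $m\in\WW_\eta^A$, for some $A\in\AAA_\eta$ containing $x$, that borders $\Phi_\eta(x)$. Any such wall contains a geodesic ray asymptotic to $[x,\eta)$, and by hyperbolicity this ray remains in a bounded tubular neighborhood of $[x,\eta)$. Local finiteness of $X$ then restricts such walls to a finite set, giving local finiteness of $X^\eta$ and hence finiteness of $\CCC_\eta$.

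The main obstacle is the finiteness count in the last step: walls $m\in\WW_\eta^A$ bordering $\Phi_\eta(x)$ may do so at points arbitrarily far from $x$, so one cannot merely invoke local finiteness inside a single bounded ball around $x$. The key will be to exploit the convexity of walls together with the asymptotic clustering of rays to $\eta$ (from hyperbolicity) in order to replace every such wall by an adjacency occurring within a bounded segment of $[x,\eta)$, where local finiteness of $X$ then produces the desired finite bound.
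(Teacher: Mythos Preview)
Your approach to the boundedness claim is correct and takes a genuinely different route from the paper's. The paper fixes a single basepoint $x$ and, via Proposition~\ref{prop:description_Xieta}, writes every $\xi\in\CCC_\eta$ as $\xi_\Gamma$ for some $\Gamma\in\CGR(x,\eta)$; it then compares $\Gamma$ with a \emph{straight} CGR $\Gamma'$ from $x$ inside a common apartment $A$, and bounds the number of walls in $\WW^A_\eta$ crossed by $\Gamma$ using the quasi-isometry between $d$ and $d_{\Ch}$. You instead compare straight CGRs from two arbitrary basepoints $x,y$ directly, using that asymptotic geodesic rays in a $\delta$-hyperbolic CAT(0) space eventually $C(\delta)$-fellow-travel, and then push a short minimal gallery between nearby chambers down to $X^\eta$ via $z\mapsto\xi_{z,\eta}$. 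This sidesteps Proposition~\ref{prop:description_Xieta} and the need for a common apartment, at the price of quoting the fellow-traveling fact. One small correction: your justification ``walls of any apartment are separated by a uniform $\epsilon>0$'' is not literally true (walls can intersect); what you actually need is the quasi-isometry $\tfrac{1}{\delta_1}d_{\Ch}-\delta_2\leq d$, which is exactly what the paper invokes via \cite[Proposition~I.7.31]{BHCAT0}.

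For the finiteness step your plan has a genuine gap. The difficulty is not only that a wall $m\in\WW^A_\eta$ bounding $\Phi_\eta(x)$ may meet it far from $x$; it is that the neighbours of $\xi_{x,\eta}$ in $X^\eta$ come from pairs of adjacent chambers $z\in\Phi_\eta(x)$, $z'\notin\Phi_\eta(x)$ lying in \emph{varying} apartments $A\in\AAA_\eta$ which need not contain $x$ at all, so controlling walls inside a single tube around $[x,\eta)$ does not by itself bound the set of such $\xi_{z',\eta}$. Your proposed fix (``replace every such wall by an adjacency within a bounded segment of $[x,\eta)$'') would require showing that each neighbour $\xi'$ of $\xi_{x,\eta}$ can be realised as $\xi_{w',\eta}$ with $w'$ in a \emph{fixed} finite set of chambers, and your sketch does not supply this. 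The paper, for its part, does not argue local finiteness of $X^\eta$ separately: it asserts that the whole lemma follows from the wall-crossing bound and Proposition~\ref{prop:description_Xieta}, leaving the passage from ``bounded'' to ``finite'' under local finiteness as understood (this is most cleanly justified via the building structure of $X^\eta$ established in the appendix, where bounded diameter forces $W_\eta$ finite and local finiteness of $X$ forces the panels of $X^\eta$ to be finite). If you want your argument to be self-contained, you should make that step explicit rather than leave it as a heuristic about walls clustering near $[x,\eta)$.
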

\begin{proof}
Let $K$ be as in Lemma~\ref{lemma:hyperbolic_basic_prop}. Note that there exist constants $\delta_1\geq 1$ and $\delta_2\geq 0$ such that 
$$\frac{1}{\delta_1}d_{\Ch}(y,z)-\delta_2\leq d(y,z)\quad\textrm{for all $y,z\in X^{(0)}$}$$ (see \cite[Proposition~I.7.31]{BHCAT0}). We also let $\delta_3>0$ be such that for any $x\in X^{(0)}$, the closed chamber $C$ of $X$ of which $x$ is the barycenter is contained in a $\delta_3$-neighbourhood of $x$ (see \S\ref{subsection:Davis_realisation}). We fix some $N\in\NN$ such that $N\geq \delta_1(2K+\delta_2+\delta_3)$.

Let $x\in X^{(0)}$ and $\eta\in\partial X$. We claim that any $\Gamma=(x_n)_{n\in\NN}\in\CGR(x,\eta)$ crosses at most $N$ walls in $\WW^A_{\eta}$, where $A$ is any apartment containing $\Gamma$ (hence $\eta\in\partial A$). This will imply that the chambers $\{\xi_{x_n,\eta} \ | \ n\in\NN\}$ of $X^{\eta}$ are at gallery distance at most $N$ from $\xi_{x,\eta}$, and hence the lemma will follow from Proposition~\ref{prop:description_Xieta}.

Let thus $\Gamma=(x_n)_{n\in\NN}\in\CGR(x,\eta)$ and let $A$ be an apartment containing $\Gamma$. Let $\Gamma'=(y_n)_{n\in\NN}\subseteq A$ be a straight CGR from $x$ to $\eta$. By Lemma~\ref{lemma:hyperbolic_basic_prop}, we know that $\Gamma$ and $\Gamma'$ are contained in a $2K$-neighbourhood of one another. Assume for a contradiction that the combinatorial geodesic $(x_n)_{0\leq n\leq k}$ crosses $N+1$ walls in $\WW^A_{\eta}$ for some $k\in\NN$. Let $\ell\in\NN$ be such that $d(x_k,y_{\ell})\leq 2K+\delta_3$. Thus $d_{\Ch}(x_k,y_{\ell})>N$. But then
$$2K+\delta_3\leq\frac{N}{\delta_1}-\delta_2<\frac{1}{\delta_1}d_{\Ch}(x_k,y_{\ell})-\delta_2\leq d(x_k,y_{\ell})\leq 2K+\delta_3,$$ 
a contradiction.
\end{proof}

\begin{remark}
Note that, although Lemma~\ref{lemma:hyperbolic_condition} will be sufficient for our purpose, it is not hard to see (using Moussong's characterisation of hyperbolicity for Coxeter groups, see \cite[Theorem~17.1]{Mou88}) that its converse also holds: the building $X$ is hyperbolic if and only if each transversal graph $X^{\eta}$ ($\eta\in\partial X$) is bounded.
\end{remark}

\section{Geodesic ray bundles in buildings}
Throughout this section, we let $X$ be a building, identified with its Davis realisation as in Section~\ref{subsection:CBOAB}, and we keep all notations introduced in Sections~\ref{section:Preliminaries} and \ref{subsection:CBOAB}. We also fix some $\eta\in\partial X$.

We denote for each $x\in X^{(0)}$ by $\Geo(x,\eta)$ the {\bf ray bundle} from $x$ to $\eta$, that is,
$$\Geo(x,\eta):=\{y\in X^{(0)} \ | \ \textrm{$y$ lies on some $\Gamma\in\CGR(x,\eta)$}\}.$$
The description of combinatorial sectors provided in Section~\ref{subsection:CBOAB} then yields the following description of ray bundles.

\begin{prop}\label{prop:basic_description_Geo}
$\Geo(x,\eta)=\bigcup_{\xi\in \CCC_{\eta}}Q(x,\xi)$.
\end{prop}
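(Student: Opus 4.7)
The plan is to prove the two inclusions separately, both as almost immediate consequences of the descriptions of combinatorial sectors and of $\CCC_{\eta}$ obtained in Section~\ref{subsection:CBOAB}. The heavy lifting has already been done, so the proof should be short.

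For the inclusion $\supseteq$, I would fix $\xi \in \CCC_{\eta}$ and $y \in Q(x,\xi)$. By Theorem~\ref{thm:descriptionQ}, $y$ lies on some $\Gamma \in \CGR(x,\eta)$ (in fact, on one converging to $\xi$, but this is not needed here). By definition of $\Geo(x,\eta)$, this means $y \in \Geo(x,\eta)$.

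For the inclusion $\subseteq$, I would fix $y \in \Geo(x,\eta)$, so that $y$ lies on some $\Gamma \in \CGR(x,\eta)$. By Lemma~\ref{lemma:correction}(1), $\Gamma$ converges to some $\xi_{\Gamma} \in \CCC_1(X) \setminus X^{(0)}$, and by Proposition~\ref{prop:description_Xieta} this limit actually belongs to $\CCC_{\eta}$. Taking $\xi := \xi_{\Gamma} \in \CCC_{\eta}$, we then have by Theorem~\ref{thm:descriptionQ} that $y \in Q(x,\xi)$, since $y$ lies on the CGR $\Gamma$ from $x$ to $\eta$ converging to $\xi$. Hence $y$ belongs to the right-hand side.

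There is essentially no obstacle here: the proposition is a repackaging of Theorem~\ref{thm:descriptionQ} together with Proposition~\ref{prop:description_Xieta}, which together say that ``being on some CGR from $x$ to $\eta$'' is equivalent to ``being on a CGR from $x$ to $\eta$ converging to some element of $\CCC_{\eta}$''. The only subtle point one should not overlook is that the limit of an \emph{arbitrary} $\Gamma \in \CGR(x,\eta)$ lies in $\CCC_{\eta}$ — this is precisely the content of Proposition~\ref{prop:description_Xieta} (which ultimately relies on Lemma~\ref{lemma:galleries_Phieta}) — so the union on the right-hand side is indeed rich enough to recover all of $\Geo(x,\eta)$.
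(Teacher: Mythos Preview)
Your proof is correct and follows essentially the same route as the paper's: both directions are deduced from Theorem~\ref{thm:descriptionQ}, with Proposition~\ref{prop:description_Xieta} guaranteeing that the limit of any $\Gamma\in\CGR(x,\eta)$ lies in $\CCC_{\eta}$. The only difference is cosmetic: you separate out the convergence step via Lemma~\ref{lemma:correction}(1) before invoking Proposition~\ref{prop:description_Xieta}, whereas the paper cites the latter directly.
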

\begin{proof}
This inclusion $\supseteq$ is clear by Theorem~\ref{thm:descriptionQ}. Conversely, if $\Gamma\in\Geo(x,\eta)$, then $\Gamma$ converges to some $\xi\in\CCC_{\eta}$ by Proposition~\ref{prop:description_Xieta}, and $\Gamma\subseteq Q(x,\xi)$ by Theorem~\ref{thm:descriptionQ}, yielding the converse inclusion.
\end{proof}

We first establish Theorem~\ref{thmintro:main} inside an apartment. 
If $x\in A^{(0)}$ for some apartment $A\in\mathcal{A}_{\eta}$, we set
$$\Geo_A(x,\eta):=\{y\in A^{(0)} \ | \ \textrm{$y$ lies on some $\Gamma\in\CGR(x,\eta)$ with $\Gamma\subseteq A$}\}.$$

\begin{lemma}\label{lemma:GeoAGeocapA}
Let $A\in\mathcal{A}_{\eta}$. Then for any $x\in A^{(0)}$,
$$\Geo_A(x,\eta)=\Geo(x,\eta)\cap A.$$
\end{lemma}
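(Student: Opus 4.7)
The inclusion $\Geo_A(x,\eta)\subseteq\Geo(x,\eta)\cap A$ is immediate from the definitions, so the content of the lemma lies in the reverse inclusion. My plan is to transport any witnessing CGR down to $A$ via the retraction $\rho:=\rho_{A,x}\co X\to A$ from \S\ref{subsection:Davis_realisation}. Concretely, given $y\in\Geo(x,\eta)\cap A^{(0)}$, I would fix a CGR $\Gamma=(x_n)_{n\in\NN}\in\CGR(x,\eta)$ with $y=x_m$ for some $m$, and set $\Gamma':=(\rho(x_n))_{n\in\NN}\subseteq A$.

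I would first check that $\Gamma'$ is a combinatorial geodesic ray. Since $\rho$ is simplicial and preserves the minimal galleries issued from $x$, the equality $d_{\Ch}(x,\rho(x_n))=d_{\Ch}(x,x_n)=n$ forces all the $\rho(x_n)$ to be pairwise distinct and each initial segment $(x,\rho(x_1),\dots,\rho(x_n))$ to be a minimal gallery in $A$. By construction, $\Gamma'$ is contained in $A$ and passes through $y=\rho(y)$, so it remains only to show that $\Gamma'$ points towards $\eta$, i.e. that it is at bounded Hausdorff distance from $[x,\eta)$.

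The key observation here is that $A\in\AAA_\eta$ and $x\in A$ force $[x,\eta)\subseteq A$ by uniqueness of geodesic rays (recalled in \S\ref{subsection:Davis_realisation}), so $\rho$ is the identity on $[x,\eta)$. Combined with the fact that $\rho$ does not increase the CAT(0) distance, this gives $d(\rho(x_n),[x,\eta))\leq d(x_n,[x,\eta))$ for each $n$; convexity of $[x,\eta)$ then propagates this bound to the whole piecewise-geodesic path $\Gamma'$. The reverse bound, from $[x,\eta)$ to $\Gamma'$, follows in the same way together with the uniform diameter of chambers in the Davis realisation, which keeps each point of $\Gamma$ within a fixed distance of some barycenter $x_n$. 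Once this is established, $\Gamma'\in\CGR(x,\eta)$ witnesses $y\in\Geo_A(x,\eta)$. The real insight is thus simply that $\rho$ fixes $[x,\eta)$ pointwise while folding $\Gamma$ into $A$; the remaining Hausdorff-distance bookkeeping is routine, and I do not foresee any substantive obstacle.
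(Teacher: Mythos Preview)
Your proposal is correct and follows essentially the same approach as the paper: both push the witnessing CGR $\Gamma$ down to $A$ via the retraction $\rho_{A,x}$, using that this retraction preserves minimal galleries from $x$ and is $1$-Lipschitz. The paper's proof is simply terser, recording these two properties in one sentence, whereas you spell out explicitly why the ``pointing towards $\eta$'' condition survives (namely because $[x,\eta)\subseteq A$ is fixed by $\rho_{A,x}$), which is exactly the content hidden behind the paper's phrase ``does not increase the distance.''
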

\begin{proof}
The inclusion $\subseteq$ is clear. For the converse inclusion, we have to show that if $y\in A^{(0)}$ lies on some $\Gamma\in\CGR(x,\eta)$, then it also lies on some $\Gamma'\in\CGR(x,\eta)$ with $\Gamma'\subseteq A$. But we may take $\Gamma'=\rho_{A,x}(\Gamma)$, because $\rho_{A,x}$ preserves combinatorial geodesic rays from $x$ and does not increase the distance.
\end{proof}

\begin{lemma}\label{lemma:xyGeoxGeoy}
Let $A\in\mathcal{A}_{\eta}$ and let $x\in A^{(0)}$.
If $y\in \Geo_A(x,\eta)$, then $\Geo_A(y,\eta)\subseteq\Geo_A(x,\eta)$.
\end{lemma}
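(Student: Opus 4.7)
My plan is, given $z\in\Geo_A(y,\eta)$, to exhibit an explicit CGR from $x$ to $\eta$ in $A$ passing through $z$. I would fix a straight CGR $\Gamma_{z\eta}\subseteq A$ from $z$ to $\eta$, which exists and lies in $A$ by Lemma~\ref{lemma:correction}(3) since $A\in\mathcal{A}_{\eta}$ (and by definition crosses no wall in $\WW^A_{\eta}$), together with any combinatorial geodesic $\Gamma_{xz}\subseteq A$ from $x$ to $z$. The goal is to prove that the concatenation $\Gamma_{xz}\cdot\Gamma_{z\eta}$ is a combinatorial geodesic ray; since $\Gamma_{z\eta}$ lies at bounded Hausdorff distance from $[z,\eta)$ and the rays $[z,\eta),[x,\eta)$ are asymptotic, the concatenation will then belong to $\CGR(x,\eta)$, so that $z\in\Geo_A(x,\eta)$ as desired.

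The crux is to verify that no wall of $A$ is crossed by both $\Gamma_{xz}$ and $\Gamma_{z\eta}$; this is where the hypotheses $y\in\Geo_A(x,\eta)$ and $z\in\Geo_A(y,\eta)$ are used. As an intermediate step I would record the following observation: \emph{if $u\in\Geo_A(v,\eta)$ and $m$ is a wall of $A$ with $m\notin\WW^A_{\eta}$ and with $v,\eta$ on the same side of $m$, then $u$ also lies on that side.} Indeed, were $u$ on the opposite side, then writing a CGR from $v$ to $\eta$ in $A$ through $u$ as $\Gamma_{vu}\cdot\Gamma_{u\eta}$, the combinatorial geodesic $\Gamma_{vu}$ would cross $m$ (as $m$ separates $v$ from $u$); on the other hand, since $m\notin\WW^A_{\eta}$ and $u,\eta$ would then lie on opposite sides of $m$, the ray $[u,\eta)$ would cross $m$, forcing $\Gamma_{u\eta}$ (being at bounded Hausdorff distance from $[u,\eta)$) to cross $m$ as well, contradicting the CGR property.

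Assume now for a contradiction that some wall $m$ of $A$ is crossed by both $\Gamma_{xz}$ and $\Gamma_{z\eta}$. Then $m$ separates $x$ from $z$; straightness of $\Gamma_{z\eta}$ forces $m\notin\WW^A_{\eta}$, and since $\Gamma_{z\eta}$ crosses $m$, $m$ also separates $z$ from $\eta$. In particular $x$ and $\eta$ both lie on the side of $m$ opposite to $z$. Applying the intermediate observation with $(v,u)=(x,y)$ places $y$ on the $\eta$-side of $m$, so that $m$ now separates $y$ from $z$; applying it once more with $(v,u)=(y,z)$ then forces $z$ onto the $\eta$-side as well, a contradiction. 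The expected main obstacle is precisely this wall-bookkeeping step: the subtlety is that walls in $\WW^A_{\eta}$ need to be treated separately, which is why replacing an arbitrary CGR from $z$ to $\eta$ by a \emph{straight} one is essential.
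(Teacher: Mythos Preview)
Your argument is correct and takes a genuinely different route from the paper's. The paper first reduces by induction on $d_{\Ch}(x,y)$ to the case where $x$ and $y$ are adjacent, then invokes Lemma~\ref{lemma:galleries_Phieta2} to obtain a CGR $\Gamma_{yz\xi}\subseteq A$ from $y$ through $z$ converging to $\xi_{z,\eta}$, and concatenates a geodesic $\Gamma_{xz}$ with the tail $\Gamma_{z\xi}$ of that CGR; the adjacency of $x$ and $y$ then pins the only possibly problematic wall down to the single wall $m$ separating $x$ from $y$, and a short Hausdorff-distance argument shows $m\in\WW^A_{\eta}$, contradicting the choice of $\Gamma_{z\xi}$. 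You instead work directly (no induction), take a \emph{straight} tail $\Gamma_{z\eta}$ from the outset, and isolate a clean side-preservation principle---``if $m\notin\WW^A_{\eta}$ and $v,\eta$ lie on the same side of $m$, then so does every point of $\Geo_A(v,\eta)$''---which you apply twice, once for $(x,y)$ and once for $(y,z)$. What your approach buys is a more conceptual, induction-free proof whose key lemma is reusable; what the paper's approach buys is that the wall analysis collapses to a single explicit wall, avoiding the need to formulate and prove the auxiliary side-preservation statement. One small point: in your intermediate observation, the implication ``$[u,\eta)$ crosses $m$, hence $\Gamma_{u\eta}$ crosses $m$'' tacitly uses that $[u,\eta)$ leaves every tubular neighbourhood of $m$ (since $\eta\notin\partial m$), so that bounded Hausdorff distance forces $\Gamma_{u\eta}$ onto the $\eta$-side eventually; this is standard in the paper's context (cf.\ the proof of Lemma~\ref{lemma:correction}(2)) but is worth making explicit.
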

\begin{proof}
Reasoning inductively on $d_{\Ch}(x,y)$, we may assume that $x$ and $y$ are adjacent. Let $m$ be the wall of $A$ separating $x$ from $y$.
Let $z\in \Geo_A(y,\eta)$ and let us show that $z\in \Geo_A(x,\eta)$. 

By Lemma~\ref{lemma:galleries_Phieta2}, we find some CGR $\Gamma_{yz\xi}\subseteq A$ from $y$ to $\eta$ going through $z$ and converging to $\xi:=\xi_{z,\eta}$. Write $\Gamma_{z\xi}$ for the CGR from $z$ to $\eta$ contained in $\Gamma_{yz\xi}$. Let also $\Gamma_{xy\eta}\subseteq A$ be a CGR from $x$ to $\eta$ going through $y$, and let $\Gamma_{y\eta}$ be the CGR from $y$ to $\eta$ contained in $\Gamma_{xy\eta}$. Finally, let $\Gamma_{xz}$ be a combinatorial geodesic from $x$ to $z$. We claim that $\Gamma_{xz}\cdot\Gamma_{z\xi}$ is a CGR, yielding the lemma.

Otherwise, there is some wall $m'$ of $A$ crossed by $\Gamma_{xz}$ and $\Gamma_{z\xi}$. Then $m'$ cannot separate $z$ from $y$ because $\Gamma_{yz\xi}$ is a CGR, and hence $m'=m$. But as $\Gamma_{xy\eta}$ is a CGR, the CGR $\Gamma_{y\eta}$ cannot cross $m$, so that $m$ separates $\Gamma_{y\eta}$ from some CGR $\Gamma_{z'\xi}$ to $\eta$ contained in $\Gamma_{z\xi}$. Since $\Gamma_{y\eta}$ and $\Gamma_{z'\xi}$ are at bounded Hausdorff distance, we deduce that $m\in\WW^A_{\eta}$, and hence that $\Gamma_{z\xi}$ cannot cross $m$, a contradiction.
\end{proof}

\begin{lemma}\label{lemma:prop_in_apt}
Assume that $X$ is hyperbolic. Let $A\in\mathcal{A}_{\eta}$ and let $x,y\in A^{(0)}$. Then the symmetric difference of $\Geo_A(x,\eta)$ and $\Geo_A(y,\eta)$ is finite.
\end{lemma}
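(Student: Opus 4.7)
The plan is to reduce to the case of adjacent chambers via a symmetric-difference telescoping along a gallery in $A$, then split into two cases according to whether the separating wall lies in $\WW^A_\eta$.

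Along any gallery $x=x_0,x_1,\dots,x_n=y$ in $A$, the inclusion $U\triangle W\subseteq(U\triangle V)\cup(V\triangle W)$ reduces the problem to the case $d_{\Ch}(x,y)=1$, so assume $x,y$ are adjacent and let $m$ be the wall of $A$ separating them. Given any $z\in\Geo_A(y,\eta)$, Lemma~\ref{lemma:galleries_Phieta2} provides a CGR $\Gamma_{yz\xi}\subseteq A$ from $y$ through $z$ to $\eta$ converging to $\xi:=\xi_{z,\eta}$; decompose it as $\Gamma_{yz\xi}=\Gamma_{yz}\cdot\Gamma_{z\xi}$. Choosing any combinatorial geodesic $\Gamma_{xz}\subseteq A$ from $x$ to $z$, the concatenation $\Gamma_{xz}\cdot\Gamma_{z\xi}$ is a CGR witnessing $z\in\Geo_A(x,\eta)$ unless some wall of $A$ is crossed by both $\Gamma_{xz}$ and $\Gamma_{z\xi}$. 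Such a wall cannot be crossed by $\Gamma_{yz}$ (else $\Gamma_{yz\xi}$ would cross it twice), so it separates $x$ from $y$ and must equal $m$. Hence $z\notin\Geo_A(x,\eta)$ only when $m$ separates $z$ from both $x$ and $\xi_{z,\eta}$.

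If $m\in\WW^A_\eta$, Lemma~\ref{lemma:correction}(3) shows that the straight CGR from $z$ stays in $\Phi_\eta(z)$ and avoids crossing $m$, so $\xi_{z,\eta}$ lies on the same side of $m$ as $z$; the bad case never arises and $\Geo_A(y,\eta)\subseteq\Geo_A(x,\eta)$, whence $\Geo_A(x,\eta)=\Geo_A(y,\eta)$ by symmetry. If $m\notin\WW^A_\eta$, then $\eta$ lies strictly on one side of $m$, say on $x$'s side; by convexity of closed half-apartments in the $\CAT$ space $A$, the ray $[x,\eta)$ stays on $x$'s side. Combined with Lemma~\ref{lemma:hyperbolic_basic_prop} and the fact that a CGR crosses each wall at most once, every $\Gamma\in\CGR(x,\eta)$ contained in $A$ also stays on $x$'s side of $m$. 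Prepending $y$ to any such $\Gamma$ yields $\Geo_A(x,\eta)\subseteq\Geo_A(y,\eta)$. Applying the key construction to any $z\in\Geo_A(y,\eta)$ on $x$'s side, the same convexity argument gives $\xi_{z,\eta}$ on $x$'s side, so $z\in\Geo_A(x,\eta)$; thus $\Geo_A(y,\eta)\setminus\Geo_A(x,\eta)$ contains only chambers on $y$'s side of $m$. Each such chamber lies in a $K$-neighborhood of the (bounded) initial portion of $[y,\eta)$ before it crosses $m$, and there are only finitely many of them by local finiteness of the Coxeter complex $A$.

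The main obstacle is this final finiteness assertion, where hyperbolicity is essential: without the uniform constant $K$ provided by Lemma~\ref{lemma:hyperbolic_basic_prop}, combinatorial geodesic rays from $y$ could wander arbitrarily far on the ``wrong side'' of $m$ (as happens in Euclidean apartments), and the symmetric difference would be infinite.
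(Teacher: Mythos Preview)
Your proof is correct and takes a genuinely different route from the paper's. The paper argues by contradiction: assuming $\Geo_A(y,\eta)\setminus\Geo_A(x,\eta)$ is infinite, it uses Lemma~\ref{lemma:xyGeoxGeoy} to show that the initial segments of witnessing CGRs avoid $\Geo_A(x,\eta)$, extracts a limiting CGR by local finiteness of $A$, invokes Lemma~\ref{lemma:hyperbolic_basic_prop} to see that this limit still points to $\eta$, and then derives a contradiction from Lemma~\ref{lemma:preparation}. Your approach is more direct and yields sharper information: you split on whether $m\in\WW^A_\eta$ and, via the concatenation construction (which in effect reproves the core of Lemma~\ref{lemma:xyGeoxGeoy}), obtain \emph{equality} $\Geo_A(x,\eta)=\Geo_A(y,\eta)$ in the first case and the containment $\Geo_A(x,\eta)\subseteq\Geo_A(y,\eta)$ together with an explicit bounded region for the difference in the second. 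This gives a clearer geometric picture of where the symmetric difference lives.

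Two small expository points. First, Lemma~\ref{lemma:galleries_Phieta2} as stated does not assert that the resulting CGR lies in the given apartment $A$; this is true (and visible from its proof, which concatenates a sub-gallery of the original CGR with a straight CGR, both in $A$), but you should say so. Second, your final finiteness step is slightly imprecise as written: a chamber $z$ on $y$'s side need not be within $K$ of the portion of $[y,\eta)$ \emph{before} it meets $m$; rather, it is within $K$ of $[y,\eta)([0,T])$ where $T$ is chosen so that $d([y,\eta)(t),m)>K$ for $t>T$ (such $T$ exists since $\eta\notin\partial m$ forces $d([y,\eta)(t),m)\to\infty$ in the CAT(0) apartment $A$). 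This bounded initial segment together with local finiteness of $A$ then gives the desired finiteness. The cleaner justification in the case $m\in\WW^A_\eta$ is Lemma~\ref{lemma:correctionbis}(2) (rather than Lemma~\ref{lemma:correction}(3)): since $\Gamma_{z\xi}$ converges to $\xi_{z,\eta}$, it crosses no wall of $\WW^A_\eta$, in particular not $m$.
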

\begin{proof}
Reasoning inductively on $d_{\Ch}(x,y)$, there is no loss of generality in assuming that $x$ and $y$ are adjacent. Assume for a contradiction that there is an infinite sequence $(y_n)_{n\in\NN}\subseteq \Geo_A(y,\eta)\setminus \Geo_A(x,\eta)$. Choose for each $n\in\NN$ some $\Gamma_n\in\CGR(y,\eta)$ passing through $y_n$. Note that if $\Gamma_{n,\leq y_n}$ denotes the combinatorial geodesic from $y$ to $y_n$ contained in $\Gamma_n$, then $\Gamma_{n,\leq y_n}$ is disjoint from $\Geo_A(x,\eta)$, for if $x'\in\Geo_A(x,\eta)\cap \Gamma_{n,\leq y_n}$, then $y_n\in\Geo_A(x',\eta)\subseteq\Geo_A(x,\eta)$ by Lemma~\ref{lemma:xyGeoxGeoy}, a contradiction.

Since $A$ is locally finite, the sequence $(\Gamma_n)_{n\in\NN}$ then subconverges to a CGR $\Gamma=(x_n)_{n\in\NN}\subseteq A$ that is disjoint from $\Geo_A(x,\eta)$. On the other hand, since $A$ is hyperbolic, Lemma~\ref{lemma:hyperbolic_basic_prop} yields that $\Gamma\in\CGR(y,\eta)$. But then Lemma~\ref{lemma:preparation} implies that $x_n\in\Geo_A(x,\eta)$ for all large enough $n$, a contradiction.
\end{proof}

We now turn to the proof of Theorem~\ref{thmintro:main} in the building $X$. For the rest of this section, we assume that $X$ is hyperbolic and locally finite, so that $\CCC_{\eta}$ is finite by Lemma~\ref{lemma:hyperbolic_condition}.

\begin{lemma}\label{lemma:z1zr}
Let $x\in X^{(0)}$ and $\xi\in\CCC_{\eta}$, and let $A$ be an apartment containing $Q(x,\xi)$. Let $S$ be an infinite subset of $Q(x,\xi)$. Then there is some $z\in Q(x,\xi)$ such that $Q(z,\xi_{z,\eta})\cap S$ is infinite.
\end{lemma}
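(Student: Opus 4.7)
The strategy is pigeonhole on the $\sim_\eta$-classes meeting $Q(x,\xi)$, combined with a gate-projection choice of representative inside the chosen class.

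Since $X$ is hyperbolic and locally finite, $\CCC_\eta$ is finite by Lemma~\ref{lemma:hyperbolic_condition}, and by Proposition~\ref{prop:alt_desc_Phi_eta} the $\sim_\eta$-classes are parameterized by $\CCC_\eta$, so only finitely many of them meet $Q(x,\xi)$. Pigeonhole yields some class $\Phi_\eta(y_0)\cap Q(x,\xi)$ meeting $S$ in an infinite subset $S_0$. Set $C:=\Phi_\eta(y_0)\cap Q(x,\xi)\subseteq A$. I first check that $C$ is combinatorially convex in $A$, as the intersection of two convex sets: $\Phi_\eta(y_0)\cap A$ is an intersection of half-apartments of $A$ (by Lemma~\ref{lemma:correctionbis}(1)), while $Q(x,\xi)\cap A=\bigcup_k\bigcap_{n\geq k}\Conv(x,x_n)$ is an increasing union of convex sets. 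Let $z\in C$ be the gate of $x$ onto $C$, i.e.\ the unique chamber of $C$ satisfying $d_\Ch(x,y)=d_\Ch(x,z)+d_\Ch(z,y)$ for all $y\in C$; such a $z$ exists by the standard gate property for non-empty convex subsets of Coxeter complexes. Note that $\Phi_\eta(z)=\Phi_\eta(y_0)$.

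By Proposition~\ref{prop:Qxixinv} applied to $z$, one has $Q(z,\xi_{z,\eta})=Q(z,\xi)\cap\Phi_\eta(z)$, so the lemma follows once I prove the key inclusion $C\subseteq Q(z,\xi)$, since then $Q(z,\xi_{z,\eta})\supseteq C\cap\Phi_\eta(z)=C\supseteq S_0$.

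The main obstacle is this key inclusion. I rely on the following wall characterization of combinatorial sectors in the apartment $A$: a chamber $w\in A^{(0)}$ lies in $Q(u,\xi)$ if and only if $w$ and $\xi$ lie on the same side of every wall of $A$ separating $u$ from $w$. This is immediate from the definition $Q(u,\xi)=\bigcup_k\bigcap_{n\geq k}\Conv(u,x_n)$ together with the fact that $(x_n)\to\xi$ means $(x_n)$ eventually lies on one side (the ``$\xi$-side'') of each wall of $A$. Now let $y\in C$ and let $m$ be any wall of $A$ separating $z$ from $y$. Since $y\sim_\eta z$, Lemma~\ref{lemma:correctionbis}(1) gives $m\notin\WW_\eta^A$, and $\xi$ has a well-defined side of $m$. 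The gate property $d_\Ch(x,y)=d_\Ch(x,z)+d_\Ch(z,y)$ implies that the walls separating $x$ from $y$ are exactly the disjoint union of those between $x$ and $z$ and those between $z$ and $y$; in particular $m$ separates $x$ from $y$ while $x$ lies on the $z$-side of $m$. Then $y\in Q(x,\xi)$ and the wall characterization force $\xi$ to lie on the $y$-side of $m$, so $y$ and $\xi$ lie on the same side of $m$. Hence $y\in Q(z,\xi)$, completing the proof.
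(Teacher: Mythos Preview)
Your argument has a genuine gap: the ``standard gate property for non-empty convex subsets of Coxeter complexes'' that you invoke is \emph{false}. A concrete counterexample already occurs in type $A_2$: with simple reflections $s,t$, the half-apartment $\{e,s,t\}$ is convex, but the longest element $w_0=sts$ has no gate onto it, since $d_{\Ch}(w_0,s)=d_{\Ch}(w_0,t)=2$ while $d_{\Ch}(s,t)=2$. (The same phenomenon occurs for half-apartments in $\tilde A_2$.) In Coxeter complexes only \emph{residues} are guaranteed to be gated, not arbitrary convex sets.

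This is not a cosmetic issue: your key inclusion $C\subseteq Q(z,\xi)$ is \emph{equivalent} to the gate property for your chosen $z$. Indeed, by your own wall characterisation, $y\in Q(z,\xi)$ fails precisely when some wall $m$ separating $z$ from $y$ has $\xi$ on the $z$-side; since $z\in Q(x,\xi)$, this happens exactly when $m$ separates $x$ from $z$ with $y$ on the $x$-side, i.e.\ when $m$ separates $x$ from $z$ but not $x$ from all of $C$. Ruling this out for every $y\in C$ says that the walls between $x$ and $z$ are exactly the walls separating $x$ from all of $C$ --- which is the gate condition. So without a gate, the inclusion $C\subseteq Q(z,\xi)$ can genuinely fail, and the argument does not go through.

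The paper's proof avoids this obstruction entirely. After the same initial pigeonhole (finding $S_1\subseteq S$ with constant $\xi_{y,\eta}=\xi_1$), it simply takes $z$ to be \emph{any} element of $S_1$, and then appeals to Lemma~\ref{lemma:prop_in_apt} (the apartment-level finiteness of $\Geo_A(x,\eta)\setminus\Geo_A(z,\eta)$, which uses hyperbolicity) to get infinitely many points of $S_1$ inside $\Geo_A(z,\eta)$. A second pigeonhole over the finitely many $Q(z,\xi_2)$ and Proposition~\ref{prop:Qxixinv} then place an infinite subset into $Q(z,\xi_{z,\eta})$. The moral is that Lemma~\ref{lemma:prop_in_apt} is doing real work here that the (nonexistent) gate property cannot replace.
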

\begin{proof}
Since $\CCC_{\eta}$ is finite, there is an infinite subset $S_1$ of $S$ and some $\xi_1\in\CCC_{\eta}$ such that $\xi_{y,\eta}=\xi_1$ for all $y\in S_1$. Let $z\in S_1$. By Lemma~\ref{lemma:prop_in_apt}, we know that $Q(x,\xi)\setminus \Geo_A(z,\eta)$ is finite, and hence by Proposition~\ref{prop:basic_description_Geo}, there is some infinite subset $S_2$ of $S_1$ contained in $Q(z,\xi_2)$ for some $\xi_2\in\CCC_{\eta}$. But then Proposition~\ref{prop:Qxixinv} implies that $S_2\subseteq Q(z,\xi_{z,\eta})$ since $\xi_{z,\eta}=\xi_1$, as desired.
\end{proof}

\begin{lemma}\label{lemma:yxiycase}
Let $x,y\in X^{(0)}$. Then $Q(y,\xi_{y,\eta})\setminus\Geo(x,\eta)$ is finite.
\end{lemma}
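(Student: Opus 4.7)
The plan is to argue by contradiction, extracting a ``deep'' chamber $z$ in $Q(y, \xi_{y,\eta})$ via Lemma~\ref{lemma:z1zr} and then reducing the problem to the apartment-level Lemma~\ref{lemma:prop_in_apt}. Suppose $S := Q(y, \xi_{y,\eta}) \setminus \Geo(x, \eta)$ is infinite. Applying Lemma~\ref{lemma:z1zr} (with $\xi = \xi_{y,\eta}$) produces some $z \in Q(y, \xi_{y,\eta})$ such that $T := Q(z, \xi_{z,\eta}) \cap S$ is infinite; note that $\xi_{z,\eta} = \xi_{y,\eta}$ since $z \in Q(y, \xi_{y,\eta}) \subseteq \Phi_\eta(y)$ by Propositions~\ref{prop:Qxixinv} and \ref{prop:alt_desc_Phi_eta}. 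The task becomes to show that $T$ is in fact finite, which contradicts its infinitude.

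I would next produce a chamber near $z$ that lies in $\Geo(x, \eta)$. Fix a straight CGR $\Gamma_z = (z_n)_{n \in \NN}$ from $z$ to $\eta$ and apply Lemma~\ref{lemma:preparation} to $x$ and $\Gamma_z$: this yields $k \in \NN$ such that for any combinatorial geodesic $\Gamma_{xz_k}$ from $x$ to $z_k$, the concatenation $\Gamma_{xz_k} \cdot (z_n)_{n \geq k}$ lies in $\CGR(x, \eta)$. In particular, $z_k \in \Geo(x, \eta)$, so by Proposition~\ref{prop:basic_description_Geo} there exists $\xi \in \CCC_\eta$ with $z_k \in Q(x, \xi)$; then $Q(z_k, \xi) \subseteq Q(x, \xi) \subseteq \Geo(x, \eta)$, and Proposition~\ref{prop:Qxixinv} upgrades this to
\[
Q(z_k, \xi_{z_k, \eta}) \subseteq Q(z_k, \xi) \subseteq \Geo(x, \eta).
\]

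To import Lemma~\ref{lemma:prop_in_apt}, let $A$ be an apartment containing $\Gamma_z$; as in the proof of Proposition~\ref{prop:Qxixinv}, $A \in \mathcal{A}_\eta$ and $Q(z, \xi_{z, \eta}) \cup \{z_k\} \subseteq A$. Lemma~\ref{lemma:prop_in_apt} then gives that $\Geo_A(z, \eta) \setminus \Geo_A(z_k, \eta)$ is finite. The only substantive bookkeeping, and the step I expect to be the main obstacle, is to verify
\[
Q(z, \xi_{z, \eta}) \cap \Geo_A(z_k, \eta) \subseteq Q(z_k, \xi_{z_k, \eta}).
\]
For this, given $w$ in the left-hand side, Proposition~\ref{prop:basic_description_Geo} together with Lemma~\ref{lemma:GeoAGeocapA} places $w$ in $Q(z_k, \xi')$ for some $\xi' \in \CCC_\eta$, while $w, z_k \in Q(z, \xi_{z, \eta}) \subseteq \Phi_\eta(z)$ forces $\xi_{w, \eta} = \xi_{z, \eta} = \xi_{z_k, \eta}$, so $w \in \Phi_\eta(z_k)$; Proposition~\ref{prop:Qxixinv} then places $w$ in $Q(z_k, \xi') \cap \Phi_\eta(z_k) = Q(z_k, \xi_{z_k, \eta})$. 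Combined with the inclusion $Q(z, \xi_{z, \eta}) \subseteq \Geo_A(z, \eta)$ (from Theorem~\ref{thm:descriptionQ} and Lemma~\ref{lemma:GeoAGeocapA}), this shows $Q(z, \xi_{z, \eta}) \setminus Q(z_k, \xi_{z_k, \eta})$ is finite. Since $Q(z_k, \xi_{z_k, \eta}) \subseteq \Geo(x, \eta)$, one then has $T \subseteq Q(z, \xi_{z, \eta}) \setminus Q(z_k, \xi_{z_k, \eta})$, so $T$ is finite, the desired contradiction.
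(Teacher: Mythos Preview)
Your proof is correct and follows essentially the same architecture as the paper's: argue by contradiction, locate a well-placed intermediate chamber whose minimal sector lies inside $\Geo(x,\eta)$, and then invoke the apartment-level Lemma~\ref{lemma:prop_in_apt} together with Proposition~\ref{prop:Qxixinv} to force the bad set to be finite.

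Two minor remarks on the comparison. First, your opening application of Lemma~\ref{lemma:z1zr} is superfluous: since $\xi=\xi_{y,\eta}$ already, taking $z=y$ gives $T=S$ directly. Second, the one genuine difference is in how the intermediate chamber is produced. The paper quotes \cite[Proposition~2.30]{CL11} (equivalently, Proposition~\ref{prop:descriptionQ_nr1} plus Lemma~\ref{lemma:preparation}) to obtain a chamber $z$ with $Q(z,\xi_{y,\eta})\subseteq Q(x,\xi_{y,\eta})\cap Q(y,\xi_{y,\eta})$ in one stroke; you instead run a straight CGR from $z$, use Lemma~\ref{lemma:preparation} to land at $z_k\in\Geo(x,\eta)$, and then pass through an auxiliary $\xi'$ via Proposition~\ref{prop:Qxixinv} to get $Q(z_k,\xi_{z_k,\eta})\subseteq\Geo(x,\eta)$. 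Your route avoids the external citation at the cost of the extra bookkeeping step verifying $Q(z,\xi_{z,\eta})\cap\Geo_A(z_k,\eta)\subseteq Q(z_k,\xi_{z_k,\eta})$; the paper's choice of $z$ already inside $Q(x,\xi_{y,\eta})$ makes that step immediate.
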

\begin{proof}
Assume for a contradiction that there exists an infinite sequence $(y_n)_{n\in\NN}\subseteq Q(y,\xi_{y,\eta})\setminus\Geo(x,\eta)$. By \cite[Proposition~2.30]{CL11}, there is some $z\in X^{(0)}$ such that $Q(z,\xi_{y,\eta})\subseteq Q(x,\xi_{y,\eta})\cap Q(y,\xi_{y,\eta})$ (note that this amounts to say that $Q(x,\xi_{y,\eta})\cap Q(y,\xi_{y,\eta})$ is nonempty, which readily follows from Proposition~\ref{prop:descriptionQ_nr1} together with Lemma~\ref{lemma:preparation}). Let $A$ be an apartment containing $Q(y,\xi_{y,\eta})$. 

By Lemma~\ref{lemma:prop_in_apt}, we know that $Q(y,\xi_{y,\eta})\setminus \Geo_A(z,\eta)$ is finite, and we may thus assume, up to taking a subsequence, that $(y_n)_{n\in\NN}\subseteq \Geo_A(z,\eta)$. Hence by Proposition~\ref{prop:basic_description_Geo}, we may further assume, again up to extracting a subsequence, that $(y_n)_{n\in\NN}\subseteq Q(z,\xi)$ for some $\xi\in\CCC_{\eta}$. Since $\xi_{z,\eta}=\xi_{y,\eta}$ and $(y_n)_{n\in\NN}\subseteq\Phi_{\eta}(y)$ by Proposition~\ref{prop:Qxixinv}, this proposition implies that $(y_n)_{n\in\NN}\subseteq Q(z,\xi_{z,\eta})$. But $Q(z,\xi_{z,\eta})\subseteq Q(x,\xi_{z,\eta})$ because $z\in Q(x,\xi_{y,\eta})=Q(x,\xi_{z,\eta})$, yielding the desired contradiction as $Q(x,\xi_{z,\eta})\subseteq \Geo(x,\eta)$ by Proposition~\ref{prop:basic_description_Geo}.
\end{proof}

\begin{theorem}
Let $x,y\in X^{(0)}$. Assume that $X$ is hyperbolic and locally finite. Then $\Geo(y,\eta)\setminus\Geo(x,\eta)$ is finite.
\end{theorem}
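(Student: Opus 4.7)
The plan is to bootstrap from Lemma~\ref{lemma:yxiycase}, which already handles the combinatorial sector $Q(y,\xi_{y,\eta})$ pointing in the ``preferred'' direction $\xi_{y,\eta}$, to the general case where one must control all of $\Geo(y,\eta)$. The key observation is that by Proposition~\ref{prop:basic_description_Geo},
\[
\Geo(y,\eta)=\bigcup_{\xi\in\CCC_{\eta}}Q(y,\xi),
\]
and by Lemma~\ref{lemma:hyperbolic_condition} the index set $\CCC_{\eta}$ is finite. Hence it suffices to show, for each fixed $\xi\in\CCC_{\eta}$, that $Q(y,\xi)\setminus\Geo(x,\eta)$ is finite.

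To do this, I would argue by contradiction: suppose $S:=Q(y,\xi)\setminus\Geo(x,\eta)$ is infinite. Let $A$ be an apartment containing $Q(y,\xi)$ (such an $A$ exists by the basic properties of combinatorial sectors recalled in Section~\ref{subsection:CBOAB}). Apply Lemma~\ref{lemma:z1zr} to $S\subseteq Q(y,\xi)$: it furnishes a point $z\in Q(y,\xi)$ for which $Q(z,\xi_{z,\eta})\cap S$ is still infinite. In particular, $Q(z,\xi_{z,\eta})$ contains infinitely many chambers lying outside $\Geo(x,\eta)$.

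But Lemma~\ref{lemma:yxiycase}, applied with $z$ in place of $y$, tells us that $Q(z,\xi_{z,\eta})\setminus\Geo(x,\eta)$ is finite, contradicting the previous sentence. This contradiction establishes the finiteness of $Q(y,\xi)\setminus\Geo(x,\eta)$ for each $\xi\in\CCC_{\eta}$, and summing over the finitely many $\xi$ yields the theorem.

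I do not expect a serious obstacle here: all the hard work has been done in proving Lemma~\ref{lemma:yxiycase} (which encapsulates the interaction between the combinatorial bordification and the visual boundary via Proposition~\ref{prop:Qxixinv}) and Lemma~\ref{lemma:z1zr} (which uses the hyperbolicity-based finiteness of $\CCC_{\eta}$ together with the apartment-level finiteness Lemma~\ref{lemma:prop_in_apt}). The remaining step is essentially a pigeonhole reduction to those two lemmas. Note that the theorem statement is asymmetric (only one inclusion of the symmetric difference), but interchanging the roles of $x$ and $y$ immediately recovers Theorem~\ref{thmintro:main}.
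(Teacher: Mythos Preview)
Your proposal is correct and follows essentially the same route as the paper's own proof: decompose $\Geo(y,\eta)$ via Proposition~\ref{prop:basic_description_Geo}, reduce to finitely many sectors $Q(y,\xi)$ using Lemma~\ref{lemma:hyperbolic_condition}, then argue by contradiction using Lemma~\ref{lemma:z1zr} to locate a suitable $z$ and finish with Lemma~\ref{lemma:yxiycase}. The paper's version is slightly terser (it does not explicitly name the apartment $A$, since that is already baked into the hypotheses of Lemma~\ref{lemma:z1zr}), but the logic is identical.
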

\begin{proof}
By Lemma~\ref{lemma:hyperbolic_condition}, we know that $\CCC_{\eta}$ is finite. By Proposition~\ref{prop:basic_description_Geo}, we have to prove that if $\xi\in\CCC_{\eta}$, then $Q(y,\xi)\setminus\Geo(x,\eta)$ is finite. Assume for a contradiction that there is an infinite sequence $(y_n)_{n\in\NN}\subseteq Q(y,\xi)\setminus\Geo(x,\eta)$. Then by Lemma~\ref{lemma:z1zr}, up to extracting a subsequence, we may assume that $(y_n)_{n\in\NN}\subseteq Q(z,\xi_{z,\eta})$ for some $z\in Q(y,\xi)$. This contradicts Lemma~\ref{lemma:yxiycase}.
\end{proof}

\appendix
\section{Transversal buildings}
Let $X$ be a building and let $\eta\in\partial X$. In \cite[Section~5]{CL11}, a construction of ``transversal building'' $X^{\eta}$ to $X$ in the direction $\eta$ is given. However, as pointed out to us by the referee, the premises of that construction are incorrect; the correct construction of $X^{\eta}$ is the one given in \S\ref{subsection:CBOAB} (where we called $X^{\eta}$ the ``transversal graph'' to $X$ in the direction $\eta$). Although we did not need this fact in our proof of Theorem~\ref{thmintro:main}, one can show that this transversal graph $X^{\eta}$ is indeed the chamber graph of a building, and therefore deserves the name of {\bf transversal building} to $X$ in the direction $\eta$. Since this fact is used in other papers, we devote the present appendix to its proof. Here, we will follow the ``$W$-metric approach'' to buildings (as opposed to the ``simplicial approach'' from \S\ref{subsection:Buildings}); a standard reference for this topic is \cite[\S 5.1]{BrownAbr}.

Let $(W,S)$ be the type of $X$. Let $A\in\AAA_{\eta}$, and view $W$ as a reflection group acting on $A$. Let $W_{\eta}$ be the reflection subgroup of $W$ generated by the reflections across the walls in $\WW^A_{\eta}$. By a classical result of Deodhar (\cite{Deo89}), $W_{\eta}$ is then itself a Coxeter group. Moreover, the polyhedral structure on $A$ induced by the walls in $\WW^A_{\eta}$ can be identified with the Coxeter complex of $W_{\eta}$. More precisely, if $x\in A^{(0)}$ is the fundamental chamber of $A$ (i.e. the chamber whose walls are associated to the reflections in $S$), then $x_{\eta}:=\Phi_{\eta}(x)\cap A$ (which coincides with the intersection of all half-spaces containing $x$ and whose wall belong to $\WW_{\eta}^A$) is the fundamental chamber of the Coxeter complex associated to the Coxeter system $(W_{\eta},S_{\eta})$, where $S_{\eta}$ is the set of reflections across the walls in $\WW_{\eta}^A$ that delimit $x_{\eta}$.

\begin{lemma}\label{lemma:indep_A}
The group $W_{\eta}$ depends only on $\eta$, and not on the choice of apartment $A$.
\end{lemma}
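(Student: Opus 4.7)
The plan is to show that for any two apartments $A, A' \in \AAA_\eta$, the reflection subgroups $W_\eta^A \leq W$ and $W_\eta^{A'} \leq W$ coincide under the canonical identifications of the reflection groups of $A$ and $A'$ with the abstract Coxeter group $W$ induced by a common base chamber. I will proceed in two steps: first treat the case where $A \cap A'$ contains a chamber, and then reduce the general case to this one.

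For the common-chamber case, let $C \in \Ch(A) \cap \Ch(A')$. Since geodesic rays in the CAT(0) space $X$ are unique, and $[C, \eta)$ lies in every apartment containing $C$ whose boundary contains $\eta$, we have $[C, \eta) \subseteq A \cap A'$. The retraction $\rho_{A,C} \co X \to A$ then restricts to a type-preserving simplicial isomorphism $\phi \co A' \to A$ that fixes $A \cap A'$ pointwise; in particular $\phi$ fixes $\eta$ and hence sends $\WW^{A'}_{\eta}$ bijectively onto $\WW^{A}_{\eta}$. Identifying both $A$ and $A'$ with $\Sigma(W, S)$ via the unique type-preserving labeling sending $C$ to $1 \in W$, the map $\phi$ becomes the identity on $\Sigma(W, S)$. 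Consequently the reflection across any $m \in \WW^{A'}_{\eta}$ is the same element of $W$ as the reflection across $\phi(m) \in \WW^{A}_{\eta}$, and so $W_\eta^{A'} = W_\eta^A$.

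For general $A, A' \in \AAA_\eta$, I would reduce to the common-chamber case by constructing a finite chain $A = A_0, A_1, \dots, A_n = A'$ of apartments in $\AAA_\eta$ such that $A_{i-1} \cap A_i$ contains a chamber for every $i$. Picking chambers $C \in A$, $C' \in A'$ and a gallery $C = D_0, D_1, \dots, D_n = C'$ in $X$, the inductive step produces $A_i \in \AAA_\eta$ containing the panel $D_{i-1} \cap D_i$ as follows: take a half-apartment $\alpha$ of $A_{i-1}$ bounded by the wall of $A_{i-1}$ through this panel, choose $\alpha$ so that it contains a geodesic ray towards $\eta$, and apply the building axiom \cite[Exercise~5.83(a)]{BrownAbr} to extend $\alpha \cup \{D_i\}$ to an apartment $A_i$. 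Then $A_{i-1} \cap A_i \supseteq \alpha$ contains chambers, and $A_i \in \AAA_\eta$ since $\alpha$ does; iterating the common-chamber case along the chain then yields $W_\eta^A = W_\eta^{A'}$.

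The hard part will be this reduction step, specifically guaranteeing that $\alpha$ can always be chosen to contain a ray towards $\eta$. When the wall through $D_{i-1} \cap D_i$ lies in $\WW^{A_{i-1}}_{\eta}$ this is automatic, since the wall itself contains rays to $\eta$ belonging to both closed half-apartments. When it does not, only one half-apartment contains such rays and the choice of $\alpha$ is forced; a convenient tool is a straight CGR from a chamber of $A_{i-1}$ to $\eta$ (whose existence inside $A_{i-1}$ is guaranteed by Lemma~\ref{lemma:correction}(3)), an infinite tail of which identifies the correct side of the wall. Even if $D_{i-1}$ ends up on the wrong side, the chain property is preserved because $A_{i-1} \cap A_i$ still contains all chambers of $\alpha$, and the successive apartments $A_i$ can be linked by further such half-apartment extensions until a common chamber with $A'$ is reached.
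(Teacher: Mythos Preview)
The paper does not give its own proof of this lemma; it simply cites \cite[Lemma~5.2]{CL11} and remarks that the argument there carries over unchanged. So there is no in-paper proof to compare against, and your two-step strategy---first the common-chamber case, then a chain of apartments in $\AAA_\eta$---is the standard one and is essentially what \cite{CL11} does.

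Your common-chamber step is correct and cleanly stated. The chain step also works, and in fact more smoothly than your last paragraph suggests. Once the inductive hypothesis is ``$A_{i-1}\in\AAA_\eta$ and $D_{i-1}\in\Ch(A_{i-1})$'', the panel $D_{i-1}\cap D_i$ lies in $A_{i-1}$, and since $\eta\in\partial A_{i-1}$ at least one closed half-apartment bounded by the wall through that panel has $\eta$ in its visual boundary; there is no obstruction here and no need for straight CGRs. You take that half $\alpha$, extend $\alpha\cup\{D_i\}$ to an apartment $A_i$ via \cite[Exercise~5.83(a)]{BrownAbr}, and the induction continues with $D_i\in\Ch(A_i)$ and $\eta\in\partial A_i$. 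At the end you reach some $A_n\in\AAA_\eta$ containing $C'$, which need not equal $A'$, but $A_n$ and $A'$ share the chamber $C'$ and one further application of the common-chamber case finishes the chain.

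One point worth making explicit: your common-chamber step uses the shared chamber as base for the identification with $W$, but along the chain these base chambers change. Strictly speaking your argument shows that the subgroups $W_\eta^A$ and $W_\eta^{A'}$ of $W$ are conjugate, not literally equal. This is all that is needed for the application (the Coxeter type $(W_\eta,S_\eta)$ of the transversal building is then well-defined up to isomorphism), and is the sense in which the lemma is to be read.
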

\begin{proof}
This is \cite[Lemma~5.2]{CL11} (and the proof of this lemma in \cite[\S 5.1]{CL11} remains valid in our context).
\end{proof}

\begin{lemma}\label{lemma:2chambers_in_apt}
Let $\xi,\xi'\in\CCC_{\eta}$, and let $x\in X^{(0)}$ with $\xi=\xi_{x,\eta}$. Then there exists some $y\in X^{(0)}$ with $\xi'=\xi_{y,\eta}$ such that $x$ and $y$ are contained in some apartment $A\in\AAA_{\eta}$.
\end{lemma}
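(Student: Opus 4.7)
The plan is to realize $y$ as a suitably far-out chamber along a combinatorial geodesic ray from $x$ that converges to $\xi'$. By Proposition~\ref{prop:description_Xieta}, there exists some $\Gamma=(x_n)_{n\in\NN}\in\CGR(x,\eta)$ (so $x_0=x$) with $\xi_\Gamma=\xi'$. Since any infinite minimal gallery in $X$ is contained in an apartment, I would fix an apartment $A$ of $X$ with $\Gamma\subseteq A$.

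The next step is to verify that $A\in\AAA_\eta$. This follows from the fact that $\Gamma$ is at bounded Hausdorff distance from the geodesic ray $[x,\eta)$: in the complete CAT(0) space $A$, the sequence of geodesic segments $[x_0,x_n]$ subconverges to a geodesic ray based at $x$ pointing towards some $\eta_A\in\partial A$, and the bounded-distance condition forces $\eta_A=\eta$ under the embedding $\partial A\hookrightarrow\partial X$. Hence $\eta\in\partial A$, so $A\in\AAA_\eta$.

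It then remains to find a chamber $y\in A^{(0)}$ with $\xi_{y,\eta}=\xi'$. Applying Lemma~\ref{lemma:galleries_Phieta} to $\Gamma$ furnishes an integer $k\in\NN$ such that the tail $\Gamma':=(x_n)_{n\geq k}$ lies entirely inside $\Phi_\eta(x_k)$. The tail $\Gamma'$ is itself a CGR from $x_k$ to $\eta$ (being a tail of a minimal gallery at bounded Hausdorff distance from $[x,\eta)$, hence from $[x_k,\eta)$), and by Lemma~\ref{lemma:correctionbis}(2) it converges to $\xi_{x_k,\eta}$. But $\Gamma'$ is also a tail of $\Gamma$, so it converges to $\xi_\Gamma=\xi'$. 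Thus $\xi_{x_k,\eta}=\xi'$, and setting $y:=x_k$ completes the argument: both $x=x_0$ and $y=x_k$ lie in the common apartment $A\in\AAA_\eta$.

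The only mildly delicate point is the verification that $A\in\AAA_\eta$; once this is in hand, the argument is a direct synthesis of results established in Section~\ref{subsection:CBOAB}, and $\xi$ itself plays no essential role in the proof beyond fixing the chamber $x$ as a reference point.
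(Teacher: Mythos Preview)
Your proof is correct. Both your argument and the paper's realize $y$ as a chamber sufficiently far along a CGR from $x$ converging to $\xi'$, lying in an apartment that also contains $x$; the difference is in how that CGR is produced. The paper starts from an arbitrary $y_0$ with $\xi_{y_0,\eta}=\xi'$, takes a straight $\Gamma_{y_0}\in\CGR(y_0,\eta)$, and uses Lemma~\ref{lemma:preparation} to splice it onto a combinatorial geodesic from $x$, then sets $y:=y_k$ (noting $\xi_{y_k,\eta}=\xi_{y_0,\eta}$ since $\Gamma_{y_0}$ is straight). You instead invoke Proposition~\ref{prop:description_Xieta} as a black box to obtain $\Gamma\in\CGR(x,\eta)$ with $\xi_\Gamma=\xi'$ directly, and then appeal to Lemma~\ref{lemma:galleries_Phieta} and Lemma~\ref{lemma:correctionbis}(2) to locate $y=x_k$. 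Your route is slightly more economical given the machinery already in place; the paper's is more self-contained. Your verification that $A\in\AAA_\eta$ is fine, though the paper treats this as immediate (cf.\ the parenthetical ``thus $A\in\mathcal{A}_\eta$'' in the proof of Lemma~\ref{lemma:galleries_Phieta}).
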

\begin{proof}
Let $y\in X^{(0)}$ with $\xi'=\xi_{y,\eta}$, and let $\Gamma_y=(y_n)_{n\in\NN}\in\CGR(y,\eta)$ be straight.  By Lemma~\ref{lemma:preparation}, there is some $k\in\NN$ such that $\Gamma_{xy_k\eta}:=\Gamma_{xy_k}\cdot (y_n)_{n\geq k}\in\CGR(x,\eta)$ for some combinatorial geodesic $\Gamma_{xy_k}$ from $x$ to $y_k$. Let $A\in\AAA_{\eta}$ be an apartment containing $\Gamma_{xy_k\eta}$. Then $\xi_{y,\eta} =\xi_{y_k,\eta}$, so that the claim follows by replacing $y$ with $y_k$.
\end{proof}

\begin{theorem}
The transversal graph $X^{\eta}$ to $X$ in the direction $\eta$ is the graph of chambers of a building of type $(W_{\eta},S_{\eta})$.
\end{theorem}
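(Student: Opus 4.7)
The plan is to define a $W_\eta$-valued distance on the vertex set $\CCC_\eta$ of $X^\eta$, check the $W$-metric building axioms (WD1)--(WD3) of \cite[\S5.1]{BrownAbr} for the Coxeter system $(W_\eta,S_\eta)$, and verify that $S_\eta$-adjacency for this distance coincides with the edge relation of $X^\eta$; the standard equivalence between the $W$-metric and simplicial definitions of buildings then yields the theorem.

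To define the distance, given $\xi,\xi'\in\CCC_\eta$ I would use Lemma~\ref{lemma:2chambers_in_apt} to choose representatives $x,y\in X^{(0)}$ and an apartment $A\in\AAA_\eta$ with $\xi=\xi_{x,\eta}$, $\xi'=\xi_{y,\eta}$, and $x,y\in A^{(0)}$. By the discussion preceding Lemma~\ref{lemma:indep_A}, the polyhedral decomposition of $A$ cut out by $\WW^A_\eta$ identifies with the Coxeter complex of $(W_\eta,S_\eta)$; by Lemma~\ref{lemma:correctionbis}(1) together with Proposition~\ref{prop:alt_desc_Phi_eta}, its chambers are exactly the regions $x_\eta=\Phi_\eta(x)\cap A$ as $x$ ranges over $A^{(0)}$, with $x_\eta=x''_\eta$ iff $\xi_{x,\eta}=\xi_{x'',\eta}$. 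Define $\delta_\eta(\xi,\xi')$ to be the Weyl distance between $x_\eta$ and $y_\eta$ in this Coxeter complex.

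For well-definedness, independence of the choice of $x,y$ within a fixed $A$ follows from Proposition~\ref{prop:alt_desc_Phi_eta}. For independence of $A$, I would invoke Lemma~\ref{lemma:indep_A} together with the standard apartment-isomorphism: whenever $A,A'\in\AAA_\eta$ share a chamber, there is an isomorphism $\phi\colon A'\to A$ fixing $A\cap A'$ pointwise, and such $\phi$ sends $\WW^{A'}_\eta$ onto $\WW^A_\eta$ (each is intrinsically characterised by containing a geodesic ray towards $\eta$, a property preserved by $\phi$), hence induces an isomorphism of Coxeter complexes matching the relevant chambers. Chaining finitely many such $\phi$'s to relate any two apartments of $\AAA_\eta$ carrying representatives of $\xi,\xi'$ yields well-definedness.

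Axiom (WD1) is immediate from the description of equality of Coxeter-complex chambers above; axioms (WD2) and (WD3) are classical Coxeter-complex statements for $(W_\eta,S_\eta)$, reduced to computations inside a single apartment $A\in\AAA_\eta$ carrying representatives of all chambers involved. To produce such common apartments for the three-chamber statements, I would iterate Lemma~\ref{lemma:2chambers_in_apt} and adjust representatives along straight CGR's via Lemma~\ref{lemma:preparation} so that all three chambers sit in one $A\in\AAA_\eta$; this bookkeeping of representatives is the main technical obstacle. Finally, coincidence with the $X^\eta$-adjacency of Section~\ref{subsection:CBOAB} is direct: distinct $\xi=\xi_{x,\eta}$ and $\xi'=\xi_{y,\eta}$ are $X^\eta$-adjacent iff one may choose adjacent representatives $x',y'\in X^{(0)}$, iff in a common apartment $A\in\AAA_\eta$ they are separated by a wall of $\WW^A_\eta$, iff $x'_\eta$ and $y'_\eta$ are $S_\eta$-adjacent chambers of the Coxeter complex, i.e.\ $\delta_\eta(\xi,\xi')\in S_\eta$.
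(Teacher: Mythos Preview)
Your overall strategy matches the paper's: define a $W_\eta$-distance on $\CCC_\eta$ via representatives in a common apartment of $\AAA_\eta$ (supplied by Lemma~\ref{lemma:2chambers_in_apt}), check well-definedness via Lemma~\ref{lemma:indep_A}, and then verify (WD1)--(WD3). Your treatment of well-definedness, (WD1), (WD3), and the edge-relation matching is essentially the paper's (indeed slightly more explicit on the last point).

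The genuine gap is in your handling of (WD2). You propose to reduce it to a Coxeter-complex computation by placing representatives of \emph{all three} chambers $\xi_1',\xi_1,\xi_2$ in a single apartment $A\in\AAA_\eta$, obtained by ``iterating Lemma~\ref{lemma:2chambers_in_apt} and adjusting representatives along straight CGR's''. But Lemma~\ref{lemma:2chambers_in_apt} only produces a common apartment for \emph{two} prescribed elements of $\CCC_\eta$, and it does so by moving one of the representatives along a CGR; iterating does not obviously yield a common apartment for three. Already in $X$ itself three chambers need not lie in a common apartment, so there is no a priori reason the transversal analogue should hold --- and establishing it would essentially amount to proving that $X^\eta$ is a building, which is what you are trying to show.

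The paper does \emph{not} attempt to put all three in one apartment. Instead it first chooses \emph{adjacent} representatives $x,x'\in X^{(0)}$ for $\xi_1,\xi_1'$ (possible since $\delta_\eta(\xi_1',\xi_1)=s\in S_\eta$), then uses Lemma~\ref{lemma:2chambers_in_apt} to put $x$ and a representative $y$ of $\xi_2$ in some $A\in\AAA_\eta$, and finally invokes the half-apartment construction \cite[Exercise~5.83(a)]{BrownAbr}: if $\alpha$ is the half-space of $A$ delimited by the wall of $A$ through the panel $x\cap x'$ and containing $y$, there is an apartment $A''\in\AAA_\eta$ containing both $\alpha$ and the adjacent chamber $x'$. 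The value $\delta_\eta(\xi_1',\xi_2)$ is then computed inside $A''$, splitting into the cases $\ell_\eta(sw)=\ell_\eta(w)\pm 1$. This half-apartment trick (and the preliminary observation that the wall of $A$ through $x\cap x'$ lies in $\WW^A_\eta$) is the key step your proposal is missing.
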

\begin{proof}
We define a ``Weyl distance'' function $\delta_{\eta}\co\CCC_{\eta}\times\CCC_{\eta}\to W_{\eta}$ as follows. Let $\xi,\xi'\in\CCC_{\eta}$. By Lemma~\ref{lemma:2chambers_in_apt}, we can write $\xi=\xi_{x,\eta}$ and $\xi'=\xi_{y,\eta}$ for some $x,y\in A^{(0)}$ where $A\in\AAA_{\eta}$. We then set
$$\delta_{\eta}(\xi,\xi'):=\delta_{\eta}^A(x_{\eta},y_{\eta}),$$
where $x_{\eta}:=\Phi_{\eta}(x)\cap A$ and $y_{\eta}:=\Phi_{\eta}(x)\cap A$ are chambers in the Coxeter complex of $(W_{\eta},S_{\eta})$ and $\delta_{\eta}^A$ is the Weyl distance function on that complex (see \cite[\S 5.1]{BrownAbr}). Note that this definition is independent of the choice of apartment $A$ (see Lemma~\ref{lemma:indep_A} and its proof) and of chambers $x,y\in A^{(0)}$ such that $\xi=\xi_{x,\eta}$ and $\xi'=\xi_{y,\eta}$. To simplify the notations, we will also simply write $\delta_{\eta}^A(x,y):=\delta_{\eta}^A(x_{\eta},y_{\eta})$.

To check that $X^{\eta}$ is a building of type $(W_{\eta},S_{\eta})$, it then remains to check the axioms (WD1), (WD2) and (WD3) of \cite[Definition~5.1]{BrownAbr}. The axioms (WD1) and (WD3) are clearly satisfied, because they are satisfied in the building $(A,\delta_{\eta}^A)$ for any apartment $A\in \AAA_{\eta}$. We now check (WD2).

Let $w\in W_{\eta}$, $s\in S_{\eta}$ and $\xi_1,\xi_2,\xi_1'\in\CCC_{\eta}$ with $\delta_{\eta}(\xi_1,\xi_2)=w$ and $\delta_{\eta}(\xi_1',\xi_1)=s$. We have to show that $\delta_{\eta}(\xi_1',\xi_2)\in\{sw,w\}$ and that $\delta_{\eta}(\xi_1',\xi_2)=sw$ if $\ell_{\eta}(sw)=\ell_{\eta}(w)+1$, where $\ell_{\eta}\co W_{\eta}\to\NN$ is the length function on $W_{\eta}$ with respect to the generating set $S_{\eta}$. Choose some adjacent chambers $x,x'\in X^{(0)}$ such that $\xi_1=\xi_{x,\eta}$ and $\xi_1'=\xi_{x',\eta}$. Let also $y\in X^{(0)}$ be such that $\xi_{2}=\xi_{y,\eta}$ and such that there is an apartment $A\in\AAA_{\eta}$ with $x,y\in A^{(0)}$ (see Lemma~\ref{lemma:2chambers_in_apt}). Let $m$ be the wall of $A$ containing the $s$-panel of $x$. Since $x$ and $x'$ are separated by a wall $m'\in\WW_{\eta}^{A'}$ in some apartment $A'\in\AAA_{\eta}$ containing $x,x'$, the wall $m$ belongs to $\WW_{\eta}^A$ (as it is the image of $m'$ by the retraction $\rho_{A,x}|_{A'}$, which fixes the geodesic ray $[x,\eta)\subseteq A\cap A'$ pointwise). On the other hand, by \cite[Exercise~5.83(a)]{BrownAbr}, there is an apartment $A''\in\AAA_{\eta}$ containing $x'$ and the half-space $\alpha$ of $A$ delimited by $m$ and that contains $y$. 

If $\ell_{\eta}(sw)=\ell_{\eta}(w)+1$, then $x\in\alpha$, and hence $x,x',y\in A''$, so that 
$$\delta_{\eta}(\xi_1',\xi_2)=\delta_{\eta}^{A''}(x',y)=\delta_{\eta}^{A''}(x',x)\delta_{\eta}^{A''}(x,y)=\delta_{\eta}(\xi_1',\xi_1)\delta_{\eta}(\xi_1,\xi_2)=sw.$$
On the other hand, if $\ell_{\eta}(sw)=\ell_{\eta}(w)-1$, so that $x\notin\alpha$, then letting $x''\in A''$ denote the unique chamber of $A''$ that is $s$-adjacent to $x$ and contained in $\alpha$, we have $\delta_{\eta}^{A''}(x'',y)=sw$. Hence, in that case,
$$\delta_{\eta}(\xi_1',\xi_2)=\delta_{\eta}^{A''}(x',y)=\delta_{\eta}^{A''}(x',x'')\delta_{\eta}^{A''}(x'',y)=\delta_{\eta}^{A''}(x',x'')sw\in\{w,sw\},$$
as desired.
\end{proof}

\def\cprime{$'$}
\providecommand{\bysame}{\leavevmode\hbox to3em{\hrulefill}\thinspace}
\providecommand{\MR}{\relax\ifhmode\unskip\space\fi MR }
\providecommand{\MRhref}[2]{%
  \href{http://www.ams.org/mathscinet-getitem?mr=#1}{#2}
}
\providecommand{\href}[2]{#2}

\end{document}